\documentclass[pdflatex,sn-mathphys-num]{sn-jnl}


\usepackage{graphicx}%
\usepackage{multirow}%
\usepackage{amsmath,amssymb,amsfonts,mathtools}%
\usepackage{amsthm}%
\usepackage{mathrsfs}%
\usepackage[title]{appendix}%
\usepackage{textcomp}%
\usepackage{manyfoot}%
\usepackage{booktabs}%
\usepackage{algorithm}%
\usepackage{algorithmic}%
\numberwithin{algorithm}{section}
\usepackage{listings}%


\usepackage{comment}
\usepackage{svg}
\usepackage{lineno}
\usepackage[dvipsnames]{xcolor}
\usepackage{hyperref}
\usepackage{cleveref}
\usepackage{colortbl}
\numberwithin{equation}{section}

\newtheorem{theorem}{Theorem}[section]
\newtheorem{lemma}[theorem]{Lemma}
\newtheorem{corollary}[theorem]{Corollary}
\newtheorem{property}[theorem]{Property}

\newtheorem{remark}[theorem]{Remark}%

\newtheorem{definition}[theorem]{Definition}%

\crefname{lemma}{Lemma}{Lemmata}
\crefname{theorem}{Theorem}{Theorems}
\crefname{corollary}{Corollary}{Corollaries}
\crefname{remark}{Remark}{Remarks}
\crefname{property}{Property}{Properties}
\crefname{figure}{Fig.}{Figs.}


\newcommand{\Z}{\mathbb Z}

\newcommand{\R}{\mathbb R}

\newcount\Comments  
\Comments=1
\newcommand{\kibitz}[2]{\ifnum\Comments=1\textcolor{#1}{#2}\fi}

\makeatletter
\newcommand{\cline}[1]{%
  \@cline#1\@nil
}

\def\@cline#1-#2\@nil{%
  \omit
  \@multicnt#1%
  \advance\@multispan\m@ne
  \ifnum\@multicnt=\@ne\@firstofone{&\omit}\fi
  \@multicnt#2%
  \advance\@multicnt -#1%
  \advance\@multicnt \@ne
  \leaders\hrule\@height\arrayrulewidth\hfill
  \hskip-\tabcolsep
  \cleaders\hbox{\vrule \@height\arrayrulewidth \@width\arrayrulewidth}\hfill
  \hskip\tabcolsep
  \advance\@multispan\@ne
  \global\advance\@multispan\@multicnt
  \ignorespaces
}
\makeatother

\raggedbottom
\begin{document}
\title{Shortest Geodesic Loops, Sectional Curvature, and Injectivity Radius of the Stiefel Manifold}


\author*[1]{\fnm{Jakob} \sur{Stoye} \email{jakob.stoye@tu-braunschweig.de}}
\author[2]{\fnm{Simon} \sur{Mataigne} \email{simon.mataigne@uclouvain.be}}
\author[2]{\fnm{P.-A.} \sur{Absil} \email{pa.absil@uclouvain.be}}
\author[3]{\fnm{Ralf} \sur{Zimmermann} \email{zimmermann@imada.sdu.dk}}

\affil*[1]{\orgdiv{Institute for Numerical Analysis}, \orgname{TU Braunschweig}, \orgaddress{\city{Braunschweig}, \country{Germany}}}
\affil[2]{\orgdiv{ICTEAM Institute}, \orgname{UCLouvain}, \orgaddress{\city{Louvain-la-Neuve}, \country{Belgium}}}
\affil[3]{\orgdiv{Department of Mathematics and Computer Science}, \orgname{University of Southern Denmark}, \orgaddress{\city{Odense}, \country{Denmark}}}
\abstract{
    We determine the length of the shortest nontrivial geodesic loops on the Stiefel manifold endowed with any member of the one-parameter family of Riemannian metrics introduced by Hüper et al. (2021). This family includes, in particular, the canonical and Euclidean metrics. By combining existing and new bounds on the sectional curvature, we determine the exact value of the injectivity radius of the Stiefel manifold under a wide range of members of the metric family. 
}
\keywords{Stiefel manifold, geodesic loops, injectivity radius, sectional curvature.}

\pacs[MSC Classification]{15B10, 15B57, 15A16, 22E70, 53C30, 53C80.}
\maketitle
{\scriptsize
\bmhead{Acknowledgements} Simon Mataigne is a Research Fellow of the Fonds de la Recherche Scientifique - FNRS. This work was supported by the Fonds de la Recherche Scientifique - FNRS under Grant no T.0001.23.}

\section{Introduction}

The Stiefel manifold is one of the most classical matrix manifolds~\cite{EdelmanAriasSmith:1999,boumal2023,Absilbook:2008}. It appears in numerous applications ranging from optimization~\cite{ChenMaManChoZhang:2024,sato2021} over numerical methods for differential equations~\cite{BennerGugercinWillcox2015,Celledoni_2020,HueperRollingStiefel2008,Zim21} to applications in statistics and data science~\cite{Chakraborty2019,Fletcher:2020,Turaga_2008}.
In particular, the geodesic endpoint problem on the Stiefel manifold has attracted a growing interest; see, e.g.,~\cite{sutti2023shootingmethodscomputinggeodesics,mataignezimmermann25}. 

Geodesics are manifold curves with zero covariant acceleration, generalizing straight lines in Euclidean spaces. The notion of \emph{covariant acceleration} depends on the selected Riemannian metric and therefore, so do the geodesics. The geodesic endpoint problem is the problem of finding a geodesic that connects two given points. On complete Riemannian manifolds, such a geodesic always exists but is not necessarily unique. Among all solutions to the geodesic endpoint problem, a \emph{minimal geodesic}, i.e., of shortest length, is especially desired since the Riemannian distance between the two points is defined by its length. Ensuring that a geodesic is minimal is therefore a crucial problem of Riemannian geometry for optimization on manifolds~\cite{Zhang22,bergmann2022manopt,Chattopadhyay16,Selvan15,hosseini2017}, statistics on manifolds~\cite{afsari2011,arnaudon2013,Pennec2006,guigui2023}, and data fitting~\cite{absil2016beziersurfaces,gousenbourger2019}.

The (\emph{global}) injectivity radius of a complete Riemannian manifold $\mathcal{M}$ is the largest $r\geq 0$ such that every geodesic of length less than or equal to $r$ is a shortest curve between its endpoints. The injectivity radius thus provides a sufficient condition on the length of a geodesic for its minimality to hold. Equivalent definitions can be found in Riemannian geometry textbooks such as~\cite{docarmo,petersen2016riemannian}. The injectivity radius is known in closed form for very few manifolds, such as the $n$-sphere or the Grassmann manifold of $p$-dimensional subspaces~\cite{Wong1967,Wong1968}. Lately, the injectivity radius of the Stiefel manifold under the Euclidean metric was added to that list~\cite{zimmermannstoye_eucl_inj:2024}.

A main tool in determining a manifold’s injectivity radius is due to Klingenberg~\cite{klingenberg1982}. If $K>0$ is a global upper bound on the sectional curvatures of the compact Riemannian manifold $\mathcal{M}$, then the injectivity radius is given by the length of a shortest (nontrivial) geodesic loop, if the latter is smaller than $\frac{\pi}{\sqrt{K}}$, and it is bounded from below by $\frac{\pi}{\sqrt{K}}$ otherwise. Klingenberg's statement requires thus the knowledge of i) a global upper bound on the sectional curvature of the manifold and ii) the length of the shortest geodesic loops.

\paragraph{Contributions.} In this work, we give the length of the shortest geodesic loops of the Stiefel manifold under every member of the one-parameter family of metrics that was introduced by H\"uper et al.~\cite{HueperMarkinaLeite2020}. 
This family of metrics is parameterized by the parameter $\beta > 0$ and the most prominent metrics on the Stiefel manifold are members of this metric family. Indeed, $\beta = \frac12$ gives the canonical metric and $\beta = 1$ gives the Euclidean metric. For both metrics, the length of the shortest geodesic loops of the Stiefel manifold have been determined; see~\cite{absilmataigne2024ultimate,Rentmeesters2013} for the canonical metric and~\cite{zimmermannstoye_eucl_inj:2024} for the Euclidean metric. In both cases, the shortest geodesic loops have a length of $2\pi$.
For $\beta\in(0,2]$, we use the essential observation of~\cite{zimmermannstoye_eucl_inj:2024} that the geodesics have constant-norm time derivatives, which makes them curves of constant Frenet curvatures, when considered as curves in the ambient Euclidean space $\R^N$. We first derive bounds on the Euclidean length of the curves and translate them in a second step to the $\beta$-metrics. For $\beta > 2$, we focus on the property that geodesic loops start and end at the same point to provide a matrix-analytic proof of the length of the shortest geodesic loops. In total, we prove that the length $\ell_\beta$ of the shortest geodesic loops on the Stiefel manifold under the family of $\beta$-metrics is given by $\ell_\beta = \min\{\sqrt{2\beta}, 1\}2\pi$.

A second contribution is that we provide the sharp upper bound $K_\beta$ on the sectional curvatures of the Stiefel manifold under the $\beta$-metrics for $\beta\in(\frac23,\frac{1}{\sqrt{2}})$. This closes a gap in the existing sharp upper bounds on the sectional curvature (see~\cite{zimmermannstoye_curvature:2024}), as there are now bounds on the sectional curvatures for $\beta\in(0,1]$. 

Finally, with the upper bound $K_\beta$ and the length $\ell_\beta$ of the shortest geodesic loops, we utilize Klingenberg's theorem to determine the exact value of the injectivity radius of the Stiefel manifold under the $\beta$-metrics for $\beta\in(0,\frac13]\cup[\frac23,1]$. For the $\beta$-intervals $(0,\frac13]$ and $[\frac23,1]$, the injectivity radius is given by $\sqrt{2\beta}\pi$ and $\pi$, respectively. For $\beta\in(\frac13,\frac23)$, the injectivity radius is bounded from above and below such that its value can differ at most $3\%$ from the conjectured injectivity radius of~\cite{absilmataigne2024ultimate}.

\paragraph{Organization of the paper.} In \Cref{sec:preliminaries}, we introduce essential notions of differential geometry and matrix manifolds. In \Cref{sec:betalength<=2} and \Cref{sec:closedgeod_beta>2}, we determine the length of the shortest geodesic loops for the cases $\beta \leq 2$ and $\beta > 2$, respectively. Between these sections, \Cref{sec:exp_inverse_and_ig} develops several properties of the matrix exponential inverse, needed in \Cref{sec:closedgeod_beta>2}. Subsequently, in \Cref{sec:bounds_sectcurvature}, we close the existing gap in the upper bound of the sectional curvature of the Stiefel manifold for $\beta \in (\frac{2}{3}, \frac{1}{\sqrt{2}})$. Finally, in \Cref{sec:injectivity}, we discuss the implications of the preceding sections for the injectivity radius of the Stiefel manifold.

\section{Preliminaries on the Stiefel manifold}\label{sec:preliminaries}
\subsection{The Stiefel manifold}
We begin by introducing the Stiefel manifold and outline basic concepts of differentiable manifolds such as geodesics, curvature, and the injectivity radius. Related references are~\cite{Absilbook:2008, gallier2011geometric,ZimmermannHueper2022}. The Stiefel manifold $\mathrm{St}(n,p)$ is the compact matrix manifold of orthogonal $p$-frames in $\mathbb{R}^n$, i.e., 
\begin{equation*}
    \mathrm{St}(n,p)\coloneq \{ U \in\mathbb{R}^{n\times p}\ | \  U^\top U = I_p\}.
\end{equation*}
It is an $np-\frac{p(p+1)}{2}$-dimensional embedded submanifold of $\mathbb{R}^{np}\cong\mathbb{R}^{n\times p}$. The Stiefel manifold is extensively studied in the literature; see, e.g.~\cite{EdelmanAriasSmith:1999,Absilbook:2008,Zim21}. A special point of the Stiefel manifold that will be used frequently is $I_{n\times p}\coloneq [I_p\ 0_{p\times (n-p)}]^\top$. For any point $U\in \mathrm{St}(n,p)$, the tangent space to $\mathrm{St}(n,p)$ at the point $U$ is given by
\begin{equation*}
    T_U\mathrm{St}(n, p) = \{\Delta\in\mathbb{R}^{n\times p}\ | \ U^\top \Delta\in\mathrm{Skew}(p)\}\subset \mathbb{R}^{n\times p}.
\end{equation*}
Every tangent vector $\Delta\in T_U \mathrm{St}(n,p)$ may be written as 
\begin{equation*}
    \Delta = UA + U_\perp B,\text{ with } A\in\mathrm{Skew}(p),\ B\in\mathbb{R}^{(n-p)\times p}\text{ and } [U\ U_\perp]\in\mathrm{O}(n).
\end{equation*}
This decomposition is obtained by taking $A = U^\top \Delta$ and $U_\perp B = (I-UU^\top)\Delta$. Letting $Q\coloneq [U \ U_\perp]$, we can write $\Delta = Q\left[\begin{smallmatrix}
    A\\
    B
\end{smallmatrix}\right]$. Notice that if $(I-UU^\top)\Delta$ is not of rank $n-p$, then $B$ has not a full row-rank.
 
On the Stiefel manifold, there exists a one-parameter family of metrics that was introduced  in
\cite{HueperMarkinaLeite2020} and studied in~\cite{nguyen2022curvature, nguyen2021closedform, ZimmermannHueper2022, absilmataigne2024ultimate, nguyen25}. Given the metric parameter $\beta \in (0,\infty)$, and any two vectors $\Delta_1,\Delta_2 \in T_U\mathrm{St}(n,p)$, $\Delta_i=Q\left[\begin{smallmatrix}
    A_i\\
    B_i
\end{smallmatrix}\right]$ for $i\in\{1,2\}$, the $\beta$-metric $\langle\cdot,\cdot\rangle_{\beta,U}:T_U\mathrm{St}(n,p)^2\rightarrow \mathbb{R} $ and its induced norm $\|\cdot \|_{\beta,U}:T_U\mathrm{St}(n,p)\rightarrow \mathbb{R}$ are defined by
\begin{equation*}
    \langle\Delta_1,\Delta_2\rangle_{\beta,U} = \beta \mathrm{tr}(A_1^\top A_2) + \mathrm{tr}(B_1^\top B_2),\quad \text{and}\quad \|\Delta_1\|_{\beta,U}= \sqrt{\langle\Delta_1,\Delta_1\rangle_{\beta,U}}.
\end{equation*}
For brevity, we drop the $U$-subscript and write $\langle\cdot,\cdot\rangle_{\beta}$ and $\|\cdot \|_{\beta}$. The Stiefel manifold endowed with the $\beta$-metric is denoted by $\mathrm{St}_\beta(n,p)$. 
To highlight the Euclidean case, we write occasionally 
$\langle\cdot,\cdot\rangle_{\mathrm{E}}$ and $\|\cdot \|_{\mathrm{E}}$ instead of $\langle\cdot,\cdot\rangle_{1}$ and $\|\cdot \|_{1}$.
The \emph{$\beta$-length} of a continuously differentiable curve $\gamma:[0,1]\rightarrow \mathrm{St}_\beta(n,p)$ is:
\begin{equation*}
    L_\beta(\gamma)\coloneq \int_0^1 \|\dot{\gamma}(t)\|_\beta \mathrm{d}t,
\end{equation*}
where $\dot{\gamma}$ denotes the time derivative of $\gamma$. W.l.o.g., in this paper, we always consider curves to be defined on the time interval $[0,1]$, unless explicitly stated otherwise. 
\subsection{Geodesics and the injectivity radius}\label{sec:geod_and_injrad_intro}
Geodesics are the generalization of straight lines in flat spaces to smooth manifolds, and candidates for length-minimizing curves, see, e.g.,~\cite[Definition~5.19]{kuhnel:2015}. 
Geodesics are uniquely defined by a starting point and a starting velocity; on the Stiefel manifold by $U\in \mathrm{St}(n,p)$, $\Delta= Q\left[\begin{smallmatrix}
    A\\
    B
\end{smallmatrix}\right] \in T_U\mathrm{St}(n,p)$, where $Q\coloneq [U \ U_\perp]\in\mathrm{O}(n)$.
By~\cite[eq. (11)]{ZimmermannHueper2022}, the associated geodesic is
\begin{equation*}
    \gamma_\beta(t) = Q \exp_\mathrm{m}\left(t\begin{bmatrix}
        2\beta A&-B^\top\\
        B&0
    \end{bmatrix}\right) I_{n\times p} \exp_{\mathrm{m}}(t(1-2\beta)A).
\end{equation*}
The Riemannian exponential is $\mathrm{Exp}_{\beta,U}:T_U\mathrm{St}(n,p)\rightarrow \mathrm{St}(n,p), \, \mathrm{Exp}_{\beta,U}(\Delta) = \gamma_\beta(1)$, and is well-defined, since $\mathrm{St}_\beta(n,p)$ is complete. An important theoretical tool for studying the Riemannian exponential and its invertibility is the \emph{injectivity radius}, see, e.g.,~\cite[Chap.~III, Def.~4.12]{sakai1996riemannian}.

\begin{definition}[Injectivity Radius]
    Let $\mathcal{M}$ be a complete Riemannian manifold, $q\in\mathcal{M}$ be a point on the manifold and $\mathrm{Exp}_q$ denote the Riemannian exponential at $q$.
    We define the injectivity radius $\mathrm{inj}_q(\mathcal{M})$ at $q$ as 
    \begin{align*}
        \sup\{r> 0;\;\mathrm{Exp}_q|_{B_r(q)}\text{ is a diffeomorphism}\}.
    \end{align*}
    The infimum of $\mathrm{inj}_q(\mathcal{M})$ over all $q\in\mathcal{M}$ is called the injectivity radius of $\mathcal{M}$ and is denoted by $\mathrm{inj}(\mathcal{M}) = \inf_{q\in\mathcal{M}}\mathrm{inj}_q(\mathcal{M})$.
\end{definition}

The Stiefel manifold $\mathrm{St}_\beta(n,p)$ is a homogeneous space for all $\beta>0$~\cite[Sect.~3.1~and~3.2]{absilmataigne2024ultimate}. Therefore, the geodesic starting at $QI_{n\times p}\in\mathrm{St}_\beta(n,p)$ with initial velocity $Q\left[\begin{smallmatrix}
    A\\
    B
\end{smallmatrix}\right]$ is the geodesic starting $I_{n\times p}$ with initial velocity $\left[\begin{smallmatrix}
    A\\
    B
\end{smallmatrix}\right]$, left multiplied by $Q$.
In consequence, the injectivity radius is independent of the starting point and, in particular, it follows that $\mathrm{inj}(\mathrm{St}_\beta(n,p)) = \mathrm{inj}_{I_{n\times p}}(\mathrm{St}_\beta(n,p))$.
 
A result due to Klingenberg~\cite{klingenberg1982} (or see~\cite[Chap.~III, Prop.~4.13]{sakai1996riemannian}) states that
\begin{align}\label{eq:inj=min(l/2,conj)}
    \mathrm{inj}(\mathrm{St}_\beta(n,p)) = \min\left\{\frac{\ell_\beta}{2}, \mathrm{conj}_{I_{n\times p}}(\mathrm{St}_\beta(n,p))\right\},
\end{align}
where $\ell_\beta$ is the length of a shortest \emph{geodesic loop} starting at $I_{n\times p}$ and $\mathrm{conj}_{I_{n\times p}}(\mathrm{St}_\beta(n,p))$ is the so-called conjugate radius (see~\cite[Chap.~III, Prop.~4.13]{sakai1996riemannian}). A \emph{geodesic loop} is a geodesic that begins and ends at the same point. Moreover, if the sectional curvature of $\mathrm{St}_\beta(n,p)$ is globally bounded by $K_\beta$, it follows that 
\begin{align}\label{eq:conjbound}
\mathrm{conj}_{I_{n\times p}}(\mathrm{St}_\beta(n,p))\geq \frac{\pi}{\sqrt{K_\beta}},
\end{align}
see, e.g.,~\cite[Chap.~III, Prop.~3.6]{sakai1996riemannian}.

Upper bounds on the conjugate radius are given in~\cite[Theorem 5.1]{absilmataigne2024ultimate} by finding explicit conjugate points, while lower bounds are given in~\cite{zimmermannstoye_curvature:2024} by determining the maximum sectional curvature $K_\beta$.
The shortest geodesic loops under the canonical and under the Euclidean metric have a length of $2\pi$, see 
\cite[Section 5]{Rentmeesters2013},~\cite[Theorem 6.1]{absilmataigne2024ultimate} (canonical case), and ~\cite{zimmermannstoye_eucl_inj:2024} (Euclidean case).
However, for $\beta\notin \{\frac{1}{2}, 1\}$, the length of the shortest geodesic loops is unknown but bounded from above by~\cite[Eq.~(6.1)]{absilmataigne2024ultimate}
\begin{equation}\label{eq:loop_upper_bound}
    \ell_\beta \leq \min\{\sqrt{2\beta}, 1\}2\pi.
\end{equation}
In this paper, we show that the upper bound from~\eqref{eq:loop_upper_bound} is an equality for the complete range $\beta\in(0,\infty)$. Together with the lower bounds on the conjugate radius (from~\cite{zimmermannstoye_eucl_inj:2024} and \Cref{sec:bounds_sectcurvature}), this finding allows us to determine the exact value of the injectivity radius of $\mathrm{St}_\beta(n,p)$ for $\beta\in(0,\frac13]\cup[\frac23,1]$ as well as defining bounds for the injectivity radius for all other values of $\beta$.
This is the content of \cref{thm:injectivity_radius}.

\section{The \texorpdfstring{$\beta$}{beta}-length of geodesic loops for \texorpdfstring{$\beta\in (0,2]$}{beta in (0,2]}}\label{sec:betalength<=2}

When $p=1$, the Stiefel manifold reduces to the unit sphere as a Riemannian submanifold of the Euclidean space $\mathbb{R}^n$. When $p = n\geq 2$, the Stiefel manifold reduces to the orthogonal group $\mathrm{O}(n)$ endowed with the Frobenius metric scaled by $\beta$. In both cases, the length of the shortest geodesic loops is well known, see, e.g.,~\cite[Chap. 6]{absilmataigne2024ultimate}.

Let $2\leq p\leq n-1$. 
In this section, we consider $\beta\in(0,2]$ 
and show that the $\beta$-length of shortest geodesic loops on $\mathrm{St}_\beta(n,p)$ is $\sqrt{2\beta}2\pi$ for $\beta<\frac12$, and $2\pi$ for $\beta\geq\frac12$ (see \cref{thm:shortest_beta_geos<=2}). 
The essential observation is that the geodesics are curves of constant Frenet curvatures, when considered as curves in the ambient Euclidean space $\mathbb{R}^{n\times p}\cong \mathbb{R}^{np}$, see~\cite{zimmermannstoye_eucl_inj:2024}.
As such a Euclidean transformation sends them to the form
\begin{equation}
    \label{eq:curvconstcurv_even}
    \gamma(t) = \left(a_1\cos(b_1t),a_1\sin(b_1t), \ldots, a_m\cos(b_mt),a_m\sin(b_mt)\right)^\top\in \mathbb{R}^{np},
\end{equation}
if $np=2m$ is even and to 
\begin{equation}
    \label{eq:curvconstcurv_odd}
    \gamma(t) = \left(a_1\cos(b_1t),a_1\sin(b_1t), \ldots, a_m\cos(b_mt),a_m\sin(b_mt), a_{m+1} t\right)^\top\in \mathbb{R}^{np},
\end{equation}
if $np=2m+1$ is odd. For loops, necessarily $a_{m+1}=0$, so that we are left with the form~\eqref{eq:curvconstcurv_even}. When viewing the geodesics in this Frenet form, we see that all geodesic loops on the Stiefel manifold $\mathrm{St}_\beta(n,p)$ are closed.
This means that the geodesic not only begins and ends at the same point, but also that the geodesic's tangent at the end point matches the tangent at the starting point.
For background on Frenet curves, consult~\cite[Sections 1.2, 1.3]{Klingenberg:1978} or~\cite[Chapter 2]{kuhnel:2015}.

For a geodesic loop $\gamma$, we let $t_L$ be the period of the loop, i.e., $\gamma(t_L) = \gamma(0)$ and $\gamma(t) \neq \gamma(0)$ for all $t\in(0,t_L)$. Since the length of a loop does not depend on its parameterization, it follows that the unit-speed assumptions in the following results causes no loss of generality and ensures that $t_L$ is equal to the Euclidean length of the loop.

\subsection{The Euclidean length of geodesic loops}

We derive a lower bound on the Euclidean length of geodesic loops on $\mathrm{St}_\beta(n,p)$. This bound is valid for all $\beta > 0$, but it is only useful for deriving a bound on $\ell_\beta$ for $\beta\in(0,2]$ (see \Cref{sec:closedgeod_beta<=2}).
We start with a general length bound.

\begin{lemma}\label[lemma]{lem:Eucl_length}
    Let $\gamma$ be a unit-speed loop with constant Frenet curvatures in the form of~\eqref{eq:curvconstcurv_even}.
    Let $b_{\min} = \min\{|b_i|\mid b_i\neq 0\}$, i.e., the smallest non-zero $b$-coefficient in~\eqref{eq:curvconstcurv_even}.
    The Euclidean length of $\gamma$ satisfies
    \[
      L_\mathrm{E}(\gamma) = t_L \geq \frac{2\pi}{b_{\min}} \geq \frac{2\pi}{\|\ddot \gamma\|_\mathrm{E}}.
    \]
\end{lemma}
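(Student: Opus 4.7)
The plan is a direct calculation from the explicit form~\eqref{eq:curvconstcurv_even} of $\gamma$. The identity $L_{\mathrm{E}}(\gamma) = t_L$ is immediate from the unit-speed hypothesis, so the content lies entirely in the two inequalities.

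For $t_L \geq 2\pi/b_{\min}$, I would evaluate the loop condition $\gamma(t_L) = \gamma(0)$ coordinate-pair by coordinate-pair. Since the $i$-th pair of $\gamma(0)$ is $(a_i,0)$, the loop condition forces $a_i\cos(b_i t_L) = a_i$ \emph{and} $a_i\sin(b_i t_L) = 0$ simultaneously; whenever $a_i\neq 0$ this yields $b_i t_L \in 2\pi\Z$ (the simultaneous cosine and sine equations rule out odd multiples of $\pi$). Unit-speed gives $\sum_i a_i^2 b_i^2 = \|\dot\gamma\|_{\mathrm{E}}^2 = 1$, so the index set $I := \{i : a_i b_i \neq 0\}$ is non-empty; for each $i\in I$ one obtains $t_L \geq 2\pi/|b_i|$, and maximizing the right-hand side over $I$ yields $t_L \geq 2\pi/b_{\min}$. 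Indices with $a_i=0$ add only zero coordinates to~\eqref{eq:curvconstcurv_even} and can be dropped without loss of generality, so the $b_{\min}$ of the statement agrees with $\min_{i\in I}|b_i|$.

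For $b_{\min} \leq \|\ddot\gamma\|_{\mathrm{E}}$, I would differentiate~\eqref{eq:curvconstcurv_even} twice to obtain the $t$-independent quantity $\|\ddot\gamma(t)\|_{\mathrm{E}}^2 = \sum_i a_i^2 b_i^4$. Pulling out $b_{\min}^2$ and using unit-speed once more:
\[
\|\ddot\gamma\|_{\mathrm{E}}^2 \;=\; \sum_{i\,:\,b_i\neq 0} a_i^2 b_i^2 \cdot b_i^2 \;\geq\; b_{\min}^2\sum_{i\,:\,b_i\neq 0} a_i^2 b_i^2 \;=\; b_{\min}^2,
\]
where terms with $b_i=0$ drop out of the sum. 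Taking reciprocals yields the second inequality.

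I do not foresee any serious obstacle: the argument is elementary trigonometry combined with the unit-speed identity. The only delicate bookkeeping is the treatment of degenerate indices (where $a_i=0$ or $b_i=0$), which must be peeled off carefully but contribute trivially both to the loop condition and to the sums $\sum_i a_i^2 b_i^2$ and $\sum_i a_i^2 b_i^4$.
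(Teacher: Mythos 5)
Your proposal is correct and follows essentially the same route as the paper's proof: the loop condition forces $b_i t_L \in 2\pi\mathbb{Z}$ for every active index, giving $t_L \geq 2\pi/b_{\min}$, and the second inequality comes from factoring $b_{\min}^2$ out of $\|\ddot\gamma\|_{\mathrm{E}}^2 = \sum_i a_i^2 b_i^4$ and using the unit-speed identity. Your explicit peeling-off of degenerate indices with $a_i = 0$ is a minor point of extra care that the paper's proof glosses over, not a genuinely different argument.
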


\begin{proof}
    Since the loop is of unit speed, we have $1=\|\dot{\gamma}\|_\mathrm{E}^2 = \sum_{i=1} a_i^2b_i^2$. From the periodicity of the sines and cosines in~\eqref{eq:curvconstcurv_even}, it is clear that the curve $\gamma$ closes if and only if there exist $t_L>0$ and $l_i\in\mathbb{Z}\setminus\{0\}$ such that $ 
    t_L = \frac{2\pi l_i}{b_i}$ for all $i \text{ with } b_i\neq 0$. The smallest such $t_L$ corresponds to the Euclidean length of $\gamma$: $L_\mathrm{E}(\gamma)=t_L$.

    Let $b_{\min}:= \min \{|b_i|\mid b_i\neq 0\}>0$ with associated $l_{\min}\in\mathbb{N}$, so that 
\begin{equation*}
     t_L = \frac{2\pi l_{\min}}{b_{\min}}\geq \frac{2\pi}{b_{\min} }.
\end{equation*}
Let $\kappa=\|\ddot \gamma\|_\mathrm{E}$ be the Euclidean curvature of $\gamma$.
Then 
\begin{equation*}
  \kappa^2 = \|\ddot \gamma\|_\mathrm{E}^2 = \sum_i a_i^2b_i^4 \geq b_{\min}^2\sum_i a_i^2 b_i^2 = b_{\min}^2 \Longleftrightarrow 
\frac{1}{b_{\min}} \geq \frac{1}{\|\ddot \gamma\|_\mathrm{E}}.  
\end{equation*}
As a consequence, $ L_\mathrm{E}(\gamma) = t_L \geq \frac{2\pi}{b_{\min} }\geq  \frac{2\pi}{\|\ddot \gamma\|_\mathrm{E}}$.
\end{proof}

Applying this result to the Stiefel geodesics yields the following lemma.

\begin{lemma}\label[lemma]{lem:Eucl_length_beta}
    Recall that $2\leq p\leq n-1$. Let $\gamma_\beta(t)\coloneq \mathrm{Exp}_{\beta,I_{n\times p}}\left(t\begin{bmatrix}
        A\\B
    \end{bmatrix}\right)$ be a geodesic loop in $\mathrm{St}_\beta(n,p)$ of unit Euclidean speed, i.e., $\sqrt{\|A\|_\mathrm{F}^2 + \|B\|_\mathrm{F}^2} = 1$, then the norm squared of the second derivative is bounded by
    \begin{align*}
        \|\ddot{\gamma}_\beta\|_E^2 \leq 1 + (1-4\beta+2\beta^2)\|B\|_\mathrm{F}^2\|A\|_\mathrm{F}^2 -\frac12 \|A\|_\mathrm{F}^4.
    \end{align*}
    As a direct consequence, the Euclidean length of $\gamma_\beta$ satisfies
    \begin{align*}
        L_\mathrm{E}(\gamma_\beta)\geq \frac{2\pi}{\sqrt{1 + (1-4\beta + 2\beta^2)\|B\|_\mathrm{F}^2\|A\|_\mathrm{F}^2 - \frac12\|A\|_\mathrm{F}^4}}.
    \end{align*}
\end{lemma}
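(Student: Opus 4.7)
The plan is to compute the ambient acceleration $\ddot\gamma_\beta$ directly from the closed-form geodesic and then apply \cref{lem:Eucl_length}. Since every Stiefel geodesic is a curve of constant Frenet curvatures in $\mathbb{R}^{np}$ (as recalled in the preceding discussion), $\|\ddot\gamma_\beta(t)\|_\mathrm{E}$ is constant in $t$, so evaluating at $t = 0$ suffices. Writing $M = \begin{bmatrix} 2\beta A & -B^\top \\ B & 0 \end{bmatrix}$ and $\tilde A = (1-2\beta)A$, the product rule applied to $\gamma_\beta(t) = \exp_\mathrm{m}(tM)\,I_{n\times p}\,\exp_\mathrm{m}(t\tilde A)$ gives
\[
\ddot\gamma_\beta(0) = M^2 I_{n\times p} + 2\,M I_{n\times p}\,\tilde A + I_{n\times p}\,\tilde A^2.
\]

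A routine block calculation collapses the coefficients of $A^2$ via the identity $4\beta^2 + 4\beta(1-2\beta) + (1-2\beta)^2 = 1$ and those of $BA$ via $2\beta + 2(1-2\beta) = 2-2\beta$, leaving
\[
\ddot\gamma_\beta(0) = \begin{bmatrix} A^2 - B^\top B \\ (2-2\beta)\, BA \end{bmatrix}.
\]
Taking the squared Frobenius norm, and using $A^{2\top} = A^2 = -A^\top A$ together with cyclicity of the trace to simplify the cross term $-2\,\mathrm{tr}(A^{2\top} B^\top B)$ into $+2\,\|BA\|_\mathrm{F}^2$, one obtains
\[
\|\ddot\gamma_\beta\|_\mathrm{E}^2 = \|A^2\|_\mathrm{F}^2 + \|B^\top B\|_\mathrm{F}^2 + \bigl(2 + 4(1-\beta)^2\bigr)\|BA\|_\mathrm{F}^2.
\]

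The final step invokes three inequalities exploiting the skew structure of $A$: because its eigenvalues come in imaginary pairs $\pm i\lambda_k$, one has $\|A\|_2^2 \le \tfrac12\|A\|_\mathrm{F}^2$ and, by the power-mean inequality, $\|A^2\|_\mathrm{F}^2 = 2\sum_k\lambda_k^4 \le \tfrac12\|A\|_\mathrm{F}^4$. Combined with $\|B^\top B\|_\mathrm{F}^2 \le \|B\|_\mathrm{F}^4$ and $\|BA\|_\mathrm{F}^2 \le \|A\|_2^2\|B\|_\mathrm{F}^2 \le \tfrac12\|A\|_\mathrm{F}^2\|B\|_\mathrm{F}^2$, and noting that $2 + 4(1-\beta)^2 > 0$ preserves the direction of the inequality, substitution yields
\[
\|\ddot\gamma_\beta\|_\mathrm{E}^2 \le \tfrac12\|A\|_\mathrm{F}^4 + \|B\|_\mathrm{F}^4 + (3-4\beta+2\beta^2)\|A\|_\mathrm{F}^2\|B\|_\mathrm{F}^2.
\]
Eliminating $\|B\|_\mathrm{F}^4$ through the unit-speed identity $(\|A\|_\mathrm{F}^2 + \|B\|_\mathrm{F}^2)^2 = 1$ reproduces the stated acceleration bound, and the length estimate then follows at once from \cref{lem:Eucl_length}.

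The main obstacle is sign bookkeeping: because $A^2 = -A^\top A$ for skew $A$, the cross term $\mathrm{tr}(A^{2\top} B^\top B)$ carries an extra minus sign that flips its contribution from subtracting to adding to the $\|BA\|_\mathrm{F}^2$ coefficient. Getting this right is essential for the three ``$\tfrac12$''-factors arising from the skew inequalities to combine, after elimination of $\|B\|_\mathrm{F}^4$, into precisely the coefficients $\tfrac12$ on $\|A\|_\mathrm{F}^4$ and $1-4\beta+2\beta^2$ on $\|A\|_\mathrm{F}^2\|B\|_\mathrm{F}^2$ claimed in the lemma.
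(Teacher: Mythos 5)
Your proposal is correct and follows essentially the same route as the paper: differentiate the closed-form geodesic to obtain the block expression $\bigl[\begin{smallmatrix} A^2 - B^\top B\\ (2-2\beta)BA\end{smallmatrix}\bigr]$ for the acceleration (the paper writes it as $-A^\top A - B^\top B$ in the top block and keeps the $t$-dependent orthogonal factors, whose norm-invariance is what you invoke via constant Frenet curvature), expand the Frobenius norm so the cross term adds $2\|BA\|_\mathrm{F}^2$, apply the skew-symmetry bounds $\|A^2\|_\mathrm{F}^2\le\tfrac12\|A\|_\mathrm{F}^4$ and $\|BA\|_\mathrm{F}^2\le\tfrac12\|A\|_\mathrm{F}^2\|B\|_\mathrm{F}^2$ (which you prove directly where the paper cites an external lemma), eliminate $\|B\|_\mathrm{F}^4$ via the unit-speed identity, and conclude with \cref{lem:Eucl_length}.
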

\begin{proof}
    The first and second derivatives of the geodesic are given by
    \begin{align*}
        \dot{\gamma}_\beta(t)  &= \exp_{\mathrm{m}}\left(t\begin{bmatrix}
        2\beta A & -B^\top\\
        B & 0
    \end{bmatrix}\right)\begin{bmatrix}
        A\\
        B
    \end{bmatrix} \exp_{\mathrm{m}}(t(1-2\beta)A),\quad \text{and},\\ 
    \ddot\gamma_\beta(t) &=
    \exp_{\mathrm{m}}\left(t\begin{bmatrix}
        2\beta A & -B^\top\\
        B & 0
    \end{bmatrix}\right)\begin{bmatrix}
        -A^\top A-B^\top B\\
        (2-2\beta)BA
    \end{bmatrix} \exp_{\mathrm{m}}(t(1-2\beta)A).
    \end{align*}
    Mind that $A$ is skew-symmetric. For the norm of the second derivative, it holds
    \begin{align*}
        \|\ddot{\gamma}_\beta\|_\mathrm{E}^2 &= \left\|\begin{bmatrix}
            -A^\top A - B^\top B\\
            (2-2\beta)BA
        \end{bmatrix}\right\|_\mathrm{F}^2\\
        & = \mathrm{tr}(A^4) + \mathrm{tr}((B^\top B)^2) + 2 \underbrace{\mathrm{tr}(A^\top A B^\top B)}_{=\mathrm{tr}(A^\top B^\top B A)} + (2-2\beta)^2\mathrm{tr}(A^\top B^\top BA)\\
        &= \|A^\top A\|_\mathrm{F}^2 + \mathrm{tr}((B^\top B)^2) + (6 - 8\beta + 4\beta^2)\|BA\|_\mathrm{F}^2.
    \end{align*}
    Next, we determine an upper bound on the norm of the second derivative that is only dependent on $\beta$, $\|A\|_\mathrm{F}$ and $\|B\|_\mathrm{F}$. 
    By the skew-symmetry of $A$, it holds $\|A^\top A\|_\mathrm{F}\leq \frac12 \|A\|_\mathrm{F}^4$ and $\|BA\|_\mathrm{F}^2\leq\frac12\|B\|_\mathrm{F}^2\|A\|_\mathrm{F}^2$, see~\cite[Lemma 3]{zimmermannstoye_curvature:2024}. 
    The $\beta$-parabola $p(\beta) = (6 - 8\beta + 4\beta^2)$ is always positive, so that we obtain
    \begin{align*}
        \|\ddot \gamma_\beta\|_\mathrm{E}^2 &\leq 
    \frac12\|A\|_\mathrm{F}^4 + \|B\|_\mathrm{F}^4 + 
     (6-8\beta+4\beta^2) \frac12\|B\|_\mathrm{F}^2 \|A\|_\mathrm{F}^2\\
    &= -\frac12 \|A\|_\mathrm{F}^4  + \|A\|_\mathrm{F}^4 + \|B\|_\mathrm{F}^4 + 2\|B\|_\mathrm{F}^2\|A\|_\mathrm{F}^2 
    + (1-4\beta+2\beta^2)\|B\|_\mathrm{F}^2\|A\|_\mathrm{F}^2\\
    &= 1 + (1-4\beta+2\beta^2)\|B\|_\mathrm{F}^2\|A\|_\mathrm{F}^2 -\frac12 \|A\|_\mathrm{F}^4.
    \end{align*}
    Applying \cref{lem:Eucl_length} concludes the proof.
\end{proof}

We record an easy corollary, which is extended in \cref{thm:shortest_beta_geos<=2} to $\beta \in (0,2]$.
\begin{corollary}
    For $\beta\in[1,1+\frac{1}{\sqrt{2}}]$, the $\beta$-length of a shortest geodesic loop on the Stiefel manifold $\mathrm{St}_\beta(n,p)$ is given by $\ell_\beta = 2\pi$.
\end{corollary}
\begin{proof}
    The $\beta$-parabola $p(\beta) = (1-4\beta + 2\beta^2)$ is non-positive in the range $\beta\in[1-\frac{1}{\sqrt{2}},1+\frac{1}{\sqrt{2}}]$. From \cref{lem:Eucl_length_beta}, we obtain that $\|\ddot \gamma_\beta\|_\mathrm{E}\leq 1$ and $ L_\mathrm{E}(\gamma_\beta) \geq 2\pi$ holds in this range. 
    For $\beta \geq 1$, we can now conclude that
    \begin{align*}
        L_\beta(\gamma_\beta) = t_L\sqrt{\beta\|A\|_\mathrm{F}^2 + \|B\|_\mathrm{F}^2}\geq t_L\sqrt{\|A\|_\mathrm{F}^2 + \|B\|_\mathrm{F}^2} = L_\mathrm{E}(\gamma_\beta)\geq 2\pi.
    \end{align*}
    This bound is sharp by the examples of~\cite[Section 6]{absilmataigne2024ultimate}.
\end{proof}

\subsection{The \texorpdfstring{$\beta$}{beta}-length of shortest geodesic loops (\texorpdfstring{$\beta\in (0,2]$}{beta in (0,2]})}
\label{sec:closedgeod_beta<=2}
In this section, we translate the Euclidean length bounds from \cref{lem:Eucl_length_beta} to the $\beta$-metric.  
Consider the geodesic loop $\gamma_\beta\colon [0,t_L]\to\mathrm{St}_\beta(n,p)$ with $\gamma_\beta(t) = \mathrm{Exp}_{\beta,I_{n\times p}}\left(t\begin{bmatrix}
    A\\B
\end{bmatrix}\right)$ of unit Euclidean speed. We have, 
$$\|A\|_\mathrm{F}^2 + \|B\|_\mathrm{F}^2 = 1\Longleftrightarrow \|B\|_\mathrm{F}^2 = 1-\|A\|_\mathrm{F}^2.$$
Recall that $t_L$ is assumed to be the period of the loop and that the Euclidean length of the loop is given by $t_L$. 
When measured according to the $\beta$-metric, we have
\begin{eqnarray*}
    L_\beta(\gamma_\beta)^2 =& \left(\int_0^{t_L}\|\dot{\gamma}_\beta(t)\|_\beta \mathrm{d}t\right)^2 
    &= t_L^2(\beta\|A\|_{\mathrm{F}}^2 + \|B\|_{\mathrm{F}}^2)\\
    =& L_\mathrm{E}(\gamma_\beta)^2(\beta\|A\|_{\mathrm{F}}^2 + 1 - \|A\|_{\mathrm{F}}^2)
    &= L_\mathrm{E}(\gamma_\beta)^2(1 + (\beta-1)\|A\|_{\mathrm{F}}^2).
\end{eqnarray*}
\cref{lem:Eucl_length_beta} yields
\begin{align}\label{eq:para_ratio}
\begin{split}
    L_\beta(\gamma_\beta)^2 &\geq (2\pi)^2\frac{(1+(\beta-1)\|A\|_{\mathrm{F}}^2)}{1+(1 - 4\beta + 2\beta^2)\|B\|_{\mathrm{F}}^2 \|A\|_{\mathrm{F}}^2 - \frac12\|A\|_{\mathrm{F}}^4}\\
    &=(2\pi)^2\frac{(1+(\beta-1)\|A\|_{\mathrm{F}}^2)}{1+(1-4\beta+2\beta^2)\|A\|_{\mathrm{F}}^2 + (-\frac32 + 4\beta-2\beta^2)\|A\|_{\mathrm{F}}^4}.
\end{split}
\end{align}

\begin{theorem}\label{thm:shortest_beta_geos<=2}
    Under the standing assumption that $2\leq p\leq n-1$ and $\beta \in (0,2]$, the $\beta$-length $\ell_\beta$ of a shortest geodesic loop on the Stiefel manifold $\mathrm{St}_\beta(n,p)$ is given by
    \begin{align*}
        \ell_\beta = \min\{\sqrt{2\beta},1\}2\pi.
    \end{align*}
\end{theorem}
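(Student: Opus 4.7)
The plan is to combine the upper bound \eqref{eq:loop_upper_bound} with a matching lower bound that I would extract directly from the ratio inequality \eqref{eq:para_ratio}. Fix any nontrivial geodesic loop $\gamma_\beta$ of unit Euclidean speed with initial velocity $\begin{bmatrix}A\\B\end{bmatrix}$ satisfying $\|A\|_\mathrm{F}^2 + \|B\|_\mathrm{F}^2 = 1$, and set $x \coloneq \|A\|_\mathrm{F}^2 \in [0,1]$. Denoting the numerator and denominator of \eqref{eq:para_ratio} by $N_\beta(x) \coloneq 1 + (\beta-1)x$ and $D_\beta(x) \coloneq 1 + (1-4\beta+2\beta^2)x + (-\tfrac{3}{2}+4\beta-2\beta^2)x^2$, it suffices to prove
\begin{equation*}
    N_\beta(x) \geq \min\{2\beta,1\}\, D_\beta(x) \qquad \text{for all } x\in[0,1],\ \beta\in(0,2].
\end{equation*}
I would then split the verification at $\beta=\tfrac{1}{2}$, the value at which $\sqrt{2\beta}=1$.

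For $\beta\in[\tfrac{1}{2},2]$, the goal becomes $N_\beta(x)\geq D_\beta(x)$. Using the factorizations $2\beta^2-5\beta+2 = 2(\beta-\tfrac{1}{2})(\beta-2)$ and $2\beta^2-4\beta+\tfrac{3}{2} = 2(\beta-\tfrac{1}{2})(\beta-\tfrac{3}{2})$, a direct expansion rewrites
\begin{equation*}
    N_\beta(x)-D_\beta(x) \;=\; 2\,x\,(\beta-\tfrac{1}{2})\bigl[(2-\beta) - (\tfrac{3}{2}-\beta)\,x\bigr].
\end{equation*}
The factor $(\beta-\tfrac{1}{2})$ is nonnegative on this range, and the linear bracket takes the values $2-\beta\geq 0$ at $x=0$ and $\tfrac{1}{2}>0$ at $x=1$, so the whole product is nonnegative on $[0,1]$.

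For $\beta\in(0,\tfrac{1}{2}]$, the goal becomes $N_\beta(x)\geq 2\beta\,D_\beta(x)$. A parallel computation factors
\begin{equation*}
    N_\beta(x)-2\beta\,D_\beta(x) \;=\; (1-2\beta)\bigl[1 + (2\beta^2-3\beta-1)x + \beta(3-2\beta)x^2\bigr],
\end{equation*}
with the prefactor $(1-2\beta)\geq 0$ accounting for the case boundary. Let $h_\beta(x)$ denote the bracketed quadratic. A direct check yields $h_\beta(0)=1$ and $h_\beta(1)=0$ identically in $\beta$, which forces $h_\beta(x)=\beta(3-2\beta)(x-1)(x-r)$ with $r=1/[\beta(3-2\beta)]$. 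Since $\beta(3-2\beta)$ is increasing on $(0,\tfrac{3}{4}]$ with $\beta(3-2\beta)|_{\beta=1/2}=1$, one has $r\geq 1$ on $(0,\tfrac{1}{2}]$, so $(x-1)(x-r)\geq 0$ on $[0,1]$ and hence $h_\beta(x)\geq 0$.

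The main obstacle is locating these clean factorizations; in particular, the identity $h_\beta(1)=0$ is what unlocks the second case, since otherwise one would be stuck minimizing a genuine quadratic with $\beta$-dependent coefficients over $[0,1]$. With the factorizations in hand, both subcases reduce to elementary sign checks at the endpoints $x=0$ and $x=1$, and the lower bound matches the upper bound \eqref{eq:loop_upper_bound} to give $\ell_\beta = \min\{\sqrt{2\beta},1\}\,2\pi$.
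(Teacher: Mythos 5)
Your proposal is correct and follows essentially the same route as the paper: both start from the ratio inequality \eqref{eq:para_ratio}, split at $\beta=\tfrac12$, and reduce each case to a polynomial factorization — indeed your factorization $(1-2\beta)\,\beta(3-2\beta)(x-1)(x-r)$ is algebraically identical to the paper's $(x-1)(2\beta-1)\bigl(x\beta(2\beta-3)+1\bigr)$, and your first-case identity matches \eqref{eq:shortest_loop_case_1/2to2} up to sign. The only differences are cosmetic (endpoint/root-location sign checks instead of the paper's coefficient-sign case analysis), so the two proofs coincide in substance.
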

\begin{proof}
    There are geodesic loops on $\mathrm{St}_\beta(n,p)$ that have the length $\sqrt{2\beta}2\pi$ or $2\pi$, see~\cite[Section 6]{absilmataigne2024ultimate}. Hence, we are left with proving that this upper bound $\ell_\beta\leq\min\{\sqrt{2\beta},1\}2\pi$ on the length of a shortest geodesic loop is also a lower bound. 
    For $\beta\in[\frac12,2]$, the upper bound is given by $2\pi$ and, for $\beta\in(0,\frac12)$, the upper bound is given by $\sqrt{2\beta}2\pi$.
    It is sufficient to show that the parametric fraction in~\eqref{eq:para_ratio} is greater than or equal to $1$ for all $\|A\|_{\mathrm{F}}\in[0,1]$ and $\beta\in[\frac12,2]$ and that it is greater than or equal to $2\beta$ for all $\|A\|_{\mathrm{F}}\in[0,1]$ and $\beta\in(0,\frac12)$.

    We start by showing that for all $\|A\|_{\mathrm{F}}\in[0,1]$ and $\beta\in[\frac12,2]$, we have
    \begin{align*}
        &&\frac{(1+(\beta-1)\|A\|_{\mathrm{F}}^2)}{1+(1-4\beta+2\beta^2)\|A\|_{\mathrm{F}}^2 + (-\frac32 + 4\beta-2\beta^2)\|A\|_{\mathrm{F}}^4}&\geq 1.
    \end{align*}
    This sought conclusion is successively equivalent to
    \begin{align}
        1+(1-4\beta+2\beta^2)\|A\|_{\mathrm{F}}^2 + (-\frac32 + 4\beta-2\beta^2)\|A\|_{\mathrm{F}}^4 &\leq 1 + (\beta-1)\|A\|_{\mathrm{F}}^2,\notag\\
        \left(\beta-\frac12\right)\|A\|_{\mathrm{F}}^2\left(2(\beta-2) + (3-2\beta)\|A\|_{\mathrm{F}}^2\right) &\leq 0.\label{eq:shortest_loop_case_1/2to2}
    \end{align}
    Since $\beta\in[\frac12,2]$, it follows that the first factor is non-negative and that the third factor is non-positive, being an affine function of $\|A\|_{\mathrm{F}}^2$ that is non-positive at the endpoints $\|A\|_\mathrm{F}^2 = 0$ and $\|A\|_\mathrm{F}^2 = 1$ for all $\beta$ in the considered range. So, the inequality in~\eqref{eq:shortest_loop_case_1/2to2} is fulfilled for all $\|A\|_{\mathrm{F}}\in[0,1]$ and $\beta \in[\frac12,2]$.
    This shows that for every geodesic loop $\gamma_\beta$ in $\mathrm{St}_\beta(n,p)$ with $\beta\in [\frac12, 2]$, its length is bounded below by $2\pi$, i.e., 
    $$L_\beta(\gamma_\beta) \geq 2\pi.$$ 
    Therefore, we have $\ell_\beta = 2\pi$, for $\beta\in[\frac12,2]$.

    It remains to show that for all $\|A\|_{\mathrm{F}}\in [0,1]$ and $\beta\in(0,\frac12)$, we have
    \begin{align}\label{eq:fraction_loop_case_0to1/2}
        \frac{\left(1 + (\beta - 1)\|A\|_{\mathrm{F}}^2\right)}{1 + (1 - 4\beta + 2\beta^2)\|A\|_{\mathrm{F}}^2 + (-\frac32 + 4\beta - 2\beta^2)\|A\|_{\mathrm{F}}^4}\geq 2\beta.
    \end{align}
    This inequality is successively equivalent to
    \begin{align}
         1 + (\beta - 1)\|A\|_{\mathrm{F}}^2    &\geq 2\beta + 2\beta(1 - 4\beta + 2\beta^2)\|A\|_{\mathrm{F}}^2 + 2\beta(-\frac32 + 4\beta - 2\beta^2)\|A\|_{\mathrm{F}}^4, \notag\\
         0        &\geq 2\beta - 1 + (1 + \beta - 8\beta^2 + 4\beta^3)\|A\|_{\mathrm{F}}^2 + (-3\beta + 8\beta^2 - 4\beta^3)\|A\|_{\mathrm{F}}^4, \notag\\
         0                &\leq (\|A\|_{\mathrm{F}}^2 - 1)(2\beta - 1)(\|A\|_{\mathrm{F}}^2\beta(2\beta - 3) + 1).\notag
    \end{align}
    The first factor is non-positive. For a fixed $\beta$, the remaining two factors together form an affine function of $\|A\|_\mathrm{F}^2$. As before, we have to check whether the values at the two ends $\|A\|_\mathrm{F}^2 = 0$ and $\|A\|_\mathrm{F}^2 = 1$ are non-positive. It is easy to verify that this holds true for $\beta\in(0,\frac12)$. Hence, the inequality in~\eqref{eq:fraction_loop_case_0to1/2} is fulfilled for all $\|A\|_{\mathrm{F}}\in[0,1]$ and $\beta\in(0,\frac12)$. This shows that for a geodesic loop $\gamma_\beta$ in $\mathrm{St}_\beta(n,p)$ with $\beta\in(0,\frac12)$, it holds
    \begin{align*}
        L_\beta(\gamma_\beta) \geq \sqrt{2\beta}2\pi. 
    \end{align*}
    Consequently, we have $\ell_\beta = \sqrt{2\beta}2\pi$, for $\beta\in(0,\frac12)$.
\end{proof}

Note that this result includes the earlier ones for the canonical and Euclidean metric of~\cite{Rentmeesters2013,absilmataigne2024ultimate} and~\cite{zimmermannstoye_eucl_inj:2024}, respectively.

\section{The exponential inverse and invariance groups}\label{sec:exp_inverse_and_ig}
The proof technique of \cref{thm:shortest_beta_geos<=2} cannot be extended to the case $\beta> 2$. Indeed, the left-hand side of~\eqref{eq:shortest_loop_case_1/2to2} can become positive for some values of $\|A\|_{\mathrm{F}}\in[0,1]$.
In order to prove the length of the shortest geodesic loops on the Stiefel manifold for $\beta>2$, we need to manipulate matrix exponential inverses. To this end, we present five short lemmata. 

Let us define the inverse of the matrix exponential $\exp_\mathrm{m}:\mathrm{Skew}(p)\rightarrow \mathrm{SO}(p)$ as the set-valued function
$$\exp_\mathrm{m}^{-1}(Q)\coloneq \{ \Omega\in\mathrm{Skew}(p)\ | \ \exp_\mathrm{m}(\Omega) = Q\}. $$
A very important set of orthogonal matrices are the real Schur forms of $\mathrm{SO}(p)$. Let $k\coloneq \lfloor \frac{p}{2} \rfloor$, $J_2=\left[\begin{smallmatrix}
    0&-1\\
    1&0
\end{smallmatrix}\right]$ and $J_{2p} = J_2\otimes I_p $. For all diagonal matrices $\Phi \in \mathbb{R}^{k\times k}$, if $p$ is even, then $$G_{p}(\Phi)\coloneq \cos(\Phi)\otimes I_2 + \sin(\Phi)\otimes J_2 =\begin{bmatrix} \cos\Phi_{11} & -\sin\Phi_{11} & & & \\ \sin\Phi_{11} & \cos\Phi_{11} & & & \\ & & \ddots & & \\ & & & \cos\Phi_{kk} & -\sin\Phi_{kk} \\ & & & \sin\Phi_{kk} & \cos\Phi_{kk} \end{bmatrix}\in \mathrm{SO}(2k)=\mathrm{SO}(p).$$ 
Otherwise, if $p$ is odd we have 
\begin{equation*}
    G_{p}(\Phi)\coloneq \begin{bmatrix}
    \cos(\Phi)\otimes I_2 + \sin(\Phi)\otimes J_2&0\\
    0&1
\end{bmatrix}\in\mathrm{SO}(p).
\end{equation*}
 Similarly, we define the real Schur forms of $\mathrm{Skew}(p)$. For all diagonal matrices $\Phi \in \mathbb{R}^{k\times k}$ and $K\in\mathbb{Z}^{k\times k}$, if $p$ is even, then $\Omega_{p}(\Phi, K)\coloneq (\Phi + 2\pi K)\otimes J_2$. Otherwise, if $p$ is odd we have 
\begin{equation*}
    \Omega_{p}(\Phi, K)\coloneq \begin{bmatrix}
    (\Phi+2\pi K)\otimes J_2&0\\
    0&0
\end{bmatrix}\in\mathrm{Skew}(p).
\end{equation*}

\cref{lem:diagonal_invariance,lem:invarianceAB} describe the structure of the invariance groups of matrices with disjoint spectra. The \emph{invariance group} of $A\in\mathbb{R}^{k\times k}$ is $$\mathrm{ig}(A)\coloneq \{M\in\mathrm{O}(k)\ | \ A = MAM^\top\}.$$ The \emph{complex invariance group} of $A\in\mathbb{C}^{k\times k}$ is $\mathrm{cig}(A)\coloneq \{M\in\mathrm{U}(k)\ | \ A = MAM^*\}$. The invariance groups are fundamental to analyze the matrix exponential inverse. In particular, a matrix $M$ is termed \emph{$J_{2n} $-orthosymplectic} if $ M\in \mathrm{ig}(J_{2n})$.
\begin{lemma}\label[lemma]{lem:diagonal_invariance}
	Given two diagonal matrices $\Lambda_A,\Lambda_B\in\mathbb{C}^{n\times n}$ with disjoint spectra, it holds that
	\begin{equation*}
	\mathrm{cig}\left(\begin{bmatrix}
	\Lambda_A&0\\
	0&\Lambda_B
	\end{bmatrix}\right) = \begin{bmatrix}
	\mathrm{cig}(\Lambda_A)&0\\
	0&\mathrm{cig}(\Lambda_B)
	\end{bmatrix}.
	\end{equation*}
\end{lemma}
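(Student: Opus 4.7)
The plan is to prove the two set inclusions separately, exploiting the fact that $\mathrm{cig}$ is just the unitary commutant: since $M \in \mathrm{U}(k)$ means $M^* = M^{-1}$, the defining condition $A = MAM^*$ is equivalent to $MA = AM$. Throughout, I would write $A := \mathrm{diag}(\Lambda_A,\Lambda_B)$ and let $n_A, n_B$ denote the block sizes.

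For the inclusion $\supseteq$, I would take $M_A \in \mathrm{cig}(\Lambda_A)$ and $M_B \in \mathrm{cig}(\Lambda_B)$, form the block-diagonal $M = \mathrm{diag}(M_A, M_B)$, observe that $M$ is unitary because each diagonal block is, and note that $MA$ and $AM$ agree block by block. This is essentially a one-line verification.

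The content sits in $\subseteq$. I would write an arbitrary $M \in \mathrm{cig}(A)$ in block form
\begin{equation*}
    M = \begin{bmatrix} M_{11} & M_{12} \\ M_{21} & M_{22} \end{bmatrix},
\end{equation*}
translate the equation $MA = AM$ into the four block equations, and focus on the off-diagonal ones, $M_{12}\Lambda_B = \Lambda_A M_{12}$ and $M_{21}\Lambda_A = \Lambda_B M_{21}$. Reading these entrywise (using that $\Lambda_A, \Lambda_B$ are diagonal) gives $(M_{12})_{ij}\bigl((\Lambda_A)_{ii} - (\Lambda_B)_{jj}\bigr) = 0$, and the disjoint-spectra hypothesis forces $M_{12} = 0$; the same argument kills $M_{21}$. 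This is the key step and, in my view, the only nontrivial one; everything else is bookkeeping.

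Once the off-diagonal blocks vanish, $M = \mathrm{diag}(M_{11}, M_{22})$, and I would close the argument by noting that unitarity of $M$ is then equivalent to unitarity of each $M_{ii}$, while the remaining two block equations read $M_{11}\Lambda_A = \Lambda_A M_{11}$ and $M_{22}\Lambda_B = \Lambda_B M_{22}$, i.e., $M_{11}\in\mathrm{cig}(\Lambda_A)$ and $M_{22}\in\mathrm{cig}(\Lambda_B)$. I do not anticipate any obstacle beyond the entrywise disjoint-spectra step already described; the proof is essentially a standard commutant decomposition for matrices with disjoint block spectra.
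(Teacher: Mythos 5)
Your proposal is correct and follows essentially the same route as the paper's proof: write $M$ in block form, extract the off-diagonal equations $M_{12}\Lambda_B = \Lambda_A M_{12}$ and $M_{21}\Lambda_A = \Lambda_B M_{21}$, and use the disjoint spectra to force $M_{12} = M_{21} = 0$, after which the diagonal blocks land in the respective invariance groups. Your write-up is in fact slightly more complete, since you spell out the entrywise argument for why the off-diagonal blocks vanish and explicitly verify the easy inclusion $\supseteq$, both of which the paper leaves implicit.
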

\begin{proof}
	Letting $M\coloneq \left[\begin{smallmatrix}
	M_{11}&M_{12}\\
	M_{21}&M_{22}
	\end{smallmatrix}\right]\in \mathrm{cig}\left(\left[\begin{smallmatrix}
	\Lambda_A&0\\
	0&\Lambda_B
	\end{smallmatrix}\right]\right)$, it must hold by definition of invariance groups that
	\begin{equation}\label{eq:M12}
	M_{12}\Lambda_B = \Lambda_A M_{12} \quad \text{and}\quad M_{21}\Lambda_A = \Lambda_B M_{21}.
	\end{equation}
	Since $\Lambda_A$ and $\Lambda_B$ have disjoint spectra,~\eqref{eq:M12} is true if and only if $M_{12}=M_{21}=0$. Therefore, $M$ is block diagonal and this implies $M_{11}\in\mathrm{cig}(A)$ and $M_{22}\in\mathrm{cig}(B)$, which concludes the proof. 
\end{proof}
The result about diagonal matrices in \cref{lem:diagonal_invariance} can be generalized to normal matrices.
\begin{lemma}\label[lemma]{lem:invarianceAB}
	Given two normal matrices $A,B\in\mathbb{R}^{n\times n}$ with disjoint spectra, it holds that
	\begin{equation*}
	\mathrm{ig}\left(\begin{bmatrix}
	A&0\\
	0&B
	\end{bmatrix}\right) = \begin{bmatrix}
	\mathrm{ig}(A)&0\\
	0&\mathrm{ig}(B)
	\end{bmatrix}.
	\end{equation*}
\end{lemma}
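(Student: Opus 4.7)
The plan is to reduce the statement to its analogue for diagonal matrices, Lemma~\ref{lem:diagonal_invariance}, by using the normality hypothesis to provide a simultaneous block-diagonal unitary diagonalization. The inclusion $\supseteq$ is immediate from the definition of the invariance group (any block-diagonal orthogonal matrix with blocks respecting $A$ and $B$ leaves $\mathrm{diag}(A,B)$ invariant under congruence), so the substance lies in proving $\subseteq$.

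For the nontrivial direction, I would first use normality to write $A = U_A \Lambda_A U_A^*$ and $B = U_B \Lambda_B U_B^*$ with diagonal $\Lambda_A, \Lambda_B \in \mathbb{C}^{n\times n}$ and unitary $U_A, U_B$, then assemble $U \coloneq \mathrm{diag}(U_A, U_B) \in \mathrm{U}(2n)$, and note that disjointness of spectra is inherited by $\Lambda_A$ and $\Lambda_B$. For any real orthogonal $M \in \mathrm{ig}(\mathrm{diag}(A,B))$, I would check that the unitary conjugate $N \coloneq U^* M U$ belongs to $\mathrm{cig}(\mathrm{diag}(\Lambda_A, \Lambda_B))$; this step uses only $M^\top = M^*$, valid because $M$ is real orthogonal. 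Lemma~\ref{lem:diagonal_invariance} then forces $N = \mathrm{diag}(N_A, N_B)$ with $N_A \in \mathrm{cig}(\Lambda_A)$ and $N_B \in \mathrm{cig}(\Lambda_B)$, and transporting back gives $M = \mathrm{diag}(U_A N_A U_A^*, U_B N_B U_B^*)$. Reality of the blocks follows from reality of $M$, and undoing the diagonalization in $N_A \Lambda_A N_A^* = \Lambda_A$ shows $U_A N_A U_A^* \in \mathrm{ig}(A)$, and likewise for the other block.

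The main obstacle, though mild, is the interplay between the real orthogonal group and the complex unitary group: the unitaries $U_A, U_B$ are in general genuinely complex, so one must detour through $\mathrm{cig}$ and verify at the end that the components of $M$ are real. A cleaner but less thematic alternative would be a direct Sylvester-equation argument—decomposing $M = [M_{ij}]$, the commutation relation $M \cdot \mathrm{diag}(A,B) = \mathrm{diag}(A,B) \cdot M$ yields $A M_{12} = M_{12} B$, which has only the trivial solution since $A$ and $B$ have disjoint spectra—but this route would sidestep both the normality hypothesis and the already-established Lemma~\ref{lem:diagonal_invariance}, so I prefer the reduction above.
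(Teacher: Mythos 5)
Your proposal is correct and follows essentially the same route as the paper's own proof: unitarily diagonalize the normal matrices $A$ and $B$, conjugate $M$ by $\mathrm{diag}(U_A,U_B)$ to land in $\mathrm{cig}\left(\mathrm{diag}(\Lambda_A,\Lambda_B)\right)$, invoke \cref{lem:diagonal_invariance}, and transport the block structure back. The only difference is cosmetic: you make explicit the points the paper leaves implicit, namely that $M^\top = M^*$ for real orthogonal $M$ and that the recovered diagonal blocks are real.
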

\begin{proof}
	Let $A= V_A \Lambda_A V_A^*$ and $B= V_B \Lambda_B V_B^*$ be eigenvalue decompositions. $V_A$ and $V_B$ are unitary since $A$ and $B$ are normal.
	For all $M\in \mathrm{ig}\left(\left[\begin{smallmatrix}
	A&0\\
	0&B
	\end{smallmatrix}\right]\right)$, we have 
	\begin{equation}\label{eq:invariance_AB}
	M\begin{bmatrix}
	A&0\\
	0&B
	\end{bmatrix} M^\top=\begin{bmatrix}
	A&0\\
	0&B
	\end{bmatrix} \iff \widetilde{M}\begin{bmatrix}
	\Lambda_A&0\\
	0&\Lambda_B
	\end{bmatrix}\widetilde{M}^*=\begin{bmatrix}
	\Lambda_A&0\\
	0&\Lambda_B
	\end{bmatrix},
\end{equation}	
where $\widetilde{M}= \left[\begin{smallmatrix}
	V_A^*&0\\
	0&V_B^*
	\end{smallmatrix}\right]M \left[\begin{smallmatrix}
	V_A&0\\
	0&V_B
	\end{smallmatrix}\right]$. Therefore, by \cref{lem:diagonal_invariance}, $\widetilde{M}$ has a block structure $\widetilde{M}= \left[\begin{smallmatrix}
	\widetilde{M}_{11}&0\\
	0&\widetilde{M}_{22}
	\end{smallmatrix}\right]$ and so does $M= \left[\begin{smallmatrix}
	V_A^*\widetilde{M}_{11} V_A&0\\
	0&V_B^*\widetilde{M}_{22} V_B
	\end{smallmatrix}\right]$. By~\eqref{eq:invariance_AB}, we have $M_{11}\in\mathrm{ig}(A)$ and $M_{22}\in\mathrm{ig}(B)$. This concludes the proof.
\end{proof}
Next, we show how the invariance groups affect the computation of the matrix exponential inverse.
\begin{lemma}\label[lemma]{lem:logGPhiGeneral}
For every positive integer $p$, $k\coloneq \lfloor \frac{p}{2}\rfloor$ and all diagonal matrices  $\Phi \in \mathbb{R}^{k\times k}$, we have
\begin{equation*}
	\exp_\mathrm{m}^{-1}(G_p(\Phi)) = \left\{ M \Omega_p(\Phi, K) M^\top 
\   | \ K\in\mathbb{Z}^{k\times k} \text{ is diagonal},\ M\in\mathrm{ig}(G_p(\Phi))
\right\}.
\end{equation*} 
\end{lemma}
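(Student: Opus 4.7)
My plan is to show the two inclusions separately, with the bulk of the work going into the nontrivial direction.

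For the $\supseteq$ inclusion, I would start with arbitrary $K \in \mathbb{Z}^{k\times k}$ diagonal and $M \in \mathrm{ig}(G_p(\Phi))$, and check that $M\Omega_p(\Phi,K)M^\top$ is skew-symmetric and exponentiates to $G_p(\Phi)$. Skew-symmetry is immediate because $\Omega_p(\Phi,K) = (\Phi+2\pi K)\otimes J_2$ (augmented by a zero in the odd-$p$ case) is skew-symmetric and $M$ is orthogonal. For the exponential, I would use orthogonality of $M$ to pull out the conjugation: $\exp_\mathrm{m}(M\Omega_p(\Phi,K)M^\top) = M\exp_\mathrm{m}(\Omega_p(\Phi,K))M^\top$. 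A direct block computation yields $\exp_\mathrm{m}(\Omega_p(\Phi,K)) = G_p(\Phi + 2\pi K) = G_p(\Phi)$ by $2\pi$-periodicity of sine and cosine, and then $MG_p(\Phi)M^\top = G_p(\Phi)$ holds by the definition of $\mathrm{ig}(G_p(\Phi))$.

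For the $\subseteq$ inclusion, my starting point would be the real Schur decomposition of a skew-symmetric matrix: any $\Omega \in \mathrm{Skew}(p)$ admits a representation $\Omega = N\,\Omega_p(\Psi,0)\,N^\top$ for some $N \in \mathrm{O}(p)$ and some diagonal $\Psi \in \mathbb{R}^{k\times k}$. Exponentiating and invoking the hypothesis $\exp_\mathrm{m}(\Omega) = G_p(\Phi)$ turns this into $NG_p(\Psi)N^\top = G_p(\Phi)$. I would then match eigenvalues on both sides: the pairs $\{e^{\pm i \psi_{jj}}\}$ must coincide with $\{e^{\pm i \phi_{jj}}\}$ as multisets (with an extra eigenvalue $1$ in the odd-$p$ case, automatically handled by the trailing zero/identity block). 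Hence there is a permutation $\sigma$ and signs $\epsilon_j \in \{\pm 1\}$ with $\psi_{jj} = \epsilon_j \phi_{\sigma(j)\sigma(j)} \bmod 2\pi$.

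The core step is to absorb this permutation and these sign choices into $N$. Permuting blocks of $\Omega_p(\Psi,0)$ corresponds to right-multiplying $N$ by an appropriate orthogonal block-permutation matrix, while flipping the sign $\psi_{jj} \mapsto -\psi_{jj}$ corresponds to conjugating the $j$-th $2\times 2$ block by $\mathrm{diag}(1,-1)$ (which negates $J_2$). After performing these orthogonal adjustments I obtain an orthogonal matrix $M$ and a diagonal integer matrix $K \in \mathbb{Z}^{k\times k}$ with $\Psi' = \Phi + 2\pi K$, so that $\Omega = M\,\Omega_p(\Phi,K)\,M^\top$. With $G_p(\Psi') = G_p(\Phi)$, the equation $M G_p(\Phi) M^\top = G_p(\Phi)$ falls out, placing $M \in \mathrm{ig}(G_p(\Phi))$ as required.

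The main obstacle I anticipate is the careful bookkeeping of the non-uniqueness of the real Schur decomposition: independent sign flips in each $2\times 2$ rotation block, reordering of the blocks, and (for odd $p$) the extra one-dimensional trivial block. What makes the argument work is that each such modification of the canonical form is realized by an explicit orthogonal change of basis, so that after all adjustments the basis change $M$ not only preserves $\Omega$'s Schur structure but actually commutes (under conjugation) with $G_p(\Phi)$, landing in $\mathrm{ig}(G_p(\Phi))$ rather than merely $\mathrm{O}(p)$.
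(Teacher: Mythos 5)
Your proposal is correct and takes essentially the same route as the paper's proof: one inclusion follows from $2\pi$-periodicity together with $\exp_\mathrm{m}(M\Omega M^\top)=M\exp_\mathrm{m}(\Omega)M^\top$, and the reverse inclusion uses the fact that any skew-symmetric preimage of $G_p(\Phi)$ is an orthogonal similarity transform of some $\Omega_p(\Phi,K)$, after which the condition $M\in\mathrm{ig}(G_p(\Phi))$ falls out of the same conjugation identity. The only difference is one of detail: you spell out the real Schur decomposition, eigenvalue matching, and the sign-flip/permutation bookkeeping that the paper compresses into the single assertion that ``the spectrum of $G_p(\Phi)$ imposes'' the orthogonal-similarity form.
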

\begin{proof}
	The spectrum of $G_p(\Phi)$ imposes that every matrix in $\exp_\mathrm{m}^{-1}(G_p(\Phi))$ is an orthogonal similarity transformation of $\Omega_p(\Phi,K)$ for some $K\in\Z^{k\times k}$, therefore, $ 
	\exp_\mathrm{m}^{-1}(G_p(\Phi)) \subseteq \{ M\Omega_p(\Phi,K)M^\top
\   | \ K\in\mathbb{Z}^{k\times k}\text{ is diagonal}
, M \in\mathrm{O}(2k)\}$. Finally, $ M\Omega_p(\Phi,K)M^\top\in \exp_\mathrm{m}^{-1}(G_p(\Phi))$ if and only if
	\begin{equation*}
	G_p(\Phi) = \exp_\mathrm{m} (M\Omega_p(\Phi,K)M^\top)= M\exp_\mathrm{m} (\Omega_p(\Phi,K))M^\top = M G_p(\Phi)M^\top,
\end{equation*}
i.e., $M\in \mathrm{ig}(G_p(\Phi))$.
\end{proof}

The next lemma is a corollary of \cref{lem:logGPhiGeneral} for the particular case where the diagonal matrix $\Phi$ is the identity $I_k$ multiplied by $\varphi \in (-\pi,\pi)\setminus\{0\}$. In this case, $\mathrm{ig}(G_{2k}(\varphi I_k))$ is the very special set of $J_{2k}$-orthosymplectic matrices.

\begin{lemma}\label[lemma]{lem:logGphiI} 
For every $\varphi \in (-\pi,\pi)\setminus\{0\}$, $k\in\mathbb{N}_0$, we have
\begin{align*}
	\exp_\mathrm{m}^{-1}(G_{2k}(\varphi I_k)) =& \Big\{M \Omega_{2k}(\varphi I_k, K)M^\top
\   | \ K\in\mathbb{Z}^{k\times k},\\ &K \text{ is diagonal and } M \text{ is $J_{2k}$-orthosymplectic}
\Big\}.
\end{align*}
\end{lemma}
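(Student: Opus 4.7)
The plan is to obtain the statement as an immediate specialization of \cref{lem:logGPhiGeneral} to the case $\Phi=\varphi I_k$, once the invariance group of $G_{2k}(\varphi I_k)$ is identified with the $J_{2k}$-orthosymplectic group.

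First, I would invoke \cref{lem:logGPhiGeneral} directly with $p=2k$ and $\Phi=\varphi I_k$. This yields the description
\begin{equation*}
    \exp_\mathrm{m}^{-1}(G_{2k}(\varphi I_k)) = \bigl\{\, M\,\Omega_{2k}(\varphi I_k,K)\,M^\top \bigm| K\in\mathbb{Z}^{k\times k}\text{ diagonal},\ M\in\mathrm{ig}(G_{2k}(\varphi I_k))\,\bigr\}.
\end{equation*}
All that remains is therefore to verify the set equality $\mathrm{ig}(G_{2k}(\varphi I_k))=\mathrm{ig}(J_{2k})$.

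Next, I would expand the definition of $G_p$ at $\Phi=\varphi I_k$. Since $\cos(\varphi I_k)=\cos(\varphi)I_k$ and $\sin(\varphi I_k)=\sin(\varphi)I_k$, linearity of the Kronecker product in its first factor gives
\begin{equation*}
    G_{2k}(\varphi I_k) = \cos(\varphi)\,(I_k\otimes I_2) + \sin(\varphi)\,(I_k\otimes J_2) = \cos(\varphi)\,I_{2k} + \sin(\varphi)\,J_{2k}.
\end{equation*}
For any $M\in\mathrm{O}(2k)$, orthogonality gives $M I_{2k} M^\top = I_{2k}$, so the invariance equation $M G_{2k}(\varphi I_k) M^\top = G_{2k}(\varphi I_k)$ is equivalent to $\sin(\varphi)\,(M J_{2k} M^\top - J_{2k})=0$. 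The hypothesis $\varphi\in(-\pi,\pi)\setminus\{0\}$ forces $\sin\varphi\neq 0$, which collapses the condition to $M J_{2k} M^\top = J_{2k}$, i.e., $M$ is $J_{2k}$-orthosymplectic. The converse is immediate, since any $M\in\mathrm{ig}(J_{2k})$ fixes both summands $\cos(\varphi)I_{2k}$ and $\sin(\varphi)J_{2k}$ and hence their sum. This proves $\mathrm{ig}(G_{2k}(\varphi I_k))=\mathrm{ig}(J_{2k})$, and substituting back into the first display finishes the proof.

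No substantive obstacle arises here, because the whole reduction is an elementary algebraic rewriting on top of \cref{lem:logGPhiGeneral}. The single critical ingredient is the non-vanishing of $\sin\varphi$, which is precisely what the hypothesis $\varphi\in(-\pi,\pi)\setminus\{0\}$ supplies; had we allowed $\varphi=0$, the matrix $G_{2k}(0)$ would be $I_{2k}$, its invariance group would be all of $\mathrm{O}(2k)$, and the identification with the (strictly smaller) $J_{2k}$-orthosymplectic group would fail.
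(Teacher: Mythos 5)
Your proof is correct and follows essentially the same route as the paper: both specialize \cref{lem:logGPhiGeneral} to $p=2k$, $\Phi=\varphi I_k$, and then identify $\mathrm{ig}(G_{2k}(\varphi I_k))$ with the group of $J_{2k}$-orthosymplectic matrices. In fact, your computation $G_{2k}(\varphi I_k)=\cos(\varphi)I_{2k}+\sin(\varphi)J_{2k}$ together with $\sin\varphi\neq 0$ supplies the verification of that identification, which the paper's proof merely asserts with ``Notice that''.
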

\begin{proof}
	Notice that $\mathrm{ig}(G_{2k}(\varphi I_k))=\{M \text{ is  $J_{2k}$-orthosymplectic}\}$. 
    Since $\mathrm{ig}(G_{2k}(\varphi I_k))\not\subset \mathrm{ig} \left( \Omega_{2k}(\varphi I_k, K)\right)$, by \cref{lem:logGPhiGeneral} in general, this concludes the proof.
\end{proof}

Finally, we relate the matrix exponential inverse of a block diagonal matrix to the block diagonal matrix of the matrix exponential inverses.
\begin{lemma}\label[lemma]{lem:logmatrix}
Let $k\coloneq\lfloor\frac{p}{2}\rfloor $. For all positive integers $k_1,p$, for all $\varphi\in (0,\pi]$ and all nonnegative diagonal matrices $\Phi\in\mathbb{R}^{k\times k}$, $\|\Phi\|_2\leq\pi$, such that $\varphi$ is not a diagonal entry of $\Phi$, we have 
\begin{equation*}
	\exp_\mathrm{m}^{-1}\begin{bmatrix}
	G_{2k_1}(\varphi I_{k_1})&0\\
	0&G_p(\Phi)
	\end{bmatrix}=\begin{bmatrix}
	\exp_\mathrm{m}^{-1} (G_{2k_1}(\varphi I_{k_1}))&0\\
	0&\exp_\mathrm{m}^{-1}(G_p(\Phi))
	\end{bmatrix}.
\end{equation*}
\end{lemma}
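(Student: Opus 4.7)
The plan is to recognize the block-diagonal matrix $\mathrm{diag}(G_{2k_1}(\varphi I_{k_1}), G_p(\Phi))$ as itself a real Schur form, namely $G_{2k_1+p}(\widetilde{\Phi})$ with $\widetilde{\Phi}\coloneq \mathrm{diag}(\varphi I_{k_1}, \Phi)$, and then to combine \cref{lem:logGPhiGeneral} with \cref{lem:invarianceAB} to force every preimage of this matrix under $\exp_\mathrm{m}$ to decompose block-diagonally.

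The inclusion ``$\supseteq$'' is immediate: given $\Omega_1\in\exp_\mathrm{m}^{-1}(G_{2k_1}(\varphi I_{k_1}))$ and $\Omega_2\in\exp_\mathrm{m}^{-1}(G_p(\Phi))$, the matrix $\mathrm{diag}(\Omega_1,\Omega_2)$ is skew-symmetric and
\[
\exp_\mathrm{m}\!\left(\begin{bmatrix}\Omega_1 & 0\\ 0 & \Omega_2\end{bmatrix}\right) = \begin{bmatrix}\exp_\mathrm{m}(\Omega_1) & 0\\ 0 & \exp_\mathrm{m}(\Omega_2)\end{bmatrix} = \begin{bmatrix}G_{2k_1}(\varphi I_{k_1}) & 0\\ 0 & G_p(\Phi)\end{bmatrix},
\]
so the right-hand side of the lemma sits inside the left-hand side.

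For the converse, I would take $\Omega$ in the left-hand set and apply \cref{lem:logGPhiGeneral} to $G_{2k_1+p}(\widetilde\Phi)$. This yields $\Omega = M\,\Omega_{2k_1+p}(\widetilde\Phi, K)\,M^\top$ for some diagonal $K$ and some $M\in\mathrm{ig}(G_{2k_1+p}(\widetilde\Phi))$. The key step is to show that $M$ itself is block-diagonal. Since $G_{2k_1}(\varphi I_{k_1})$ and $G_p(\Phi)$ are orthogonal, hence normal, \cref{lem:invarianceAB} delivers this as soon as their spectra are disjoint. The spectrum of $G_{2k_1}(\varphi I_{k_1})$ is $\{e^{\pm i\varphi}\}$ (which collapses to $\{-1\}$ when $\varphi=\pi$), while the spectrum of $G_p(\Phi)$ is contained in $\{e^{\pm i\Phi_{jj}} : 1\leq j\leq k\}$, augmented by $\{1\}$ when $p$ is odd. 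A short case split on whether $\varphi=\pi$, combined with $\varphi\in(0,\pi]$, $\Phi_{jj}\in[0,\pi]$, and $\varphi\neq\Phi_{jj}$ for all $j$, rules out every common eigenvalue.

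Once $M = \mathrm{diag}(M_1,M_2)$ with $M_1\in\mathrm{ig}(G_{2k_1}(\varphi I_{k_1}))$ and $M_2\in\mathrm{ig}(G_p(\Phi))$, and using that $\widetilde\Phi$ and $K$ split compatibly as $K = \mathrm{diag}(K_1,K_2)$ with $\Omega_{2k_1+p}(\widetilde\Phi,K) = \mathrm{diag}(\Omega_{2k_1}(\varphi I_{k_1},K_1),\Omega_p(\Phi,K_2))$, the matrix $\Omega$ is itself block-diagonal, and \cref{lem:logGPhiGeneral} applied to each diagonal block places it in the corresponding matrix exponential inverse. The main obstacle I foresee is the clean disjoint-spectrum verification across the boundary cases $\varphi=\pi$, $\Phi_{jj}=0$, and $\Phi_{jj}=\pi$, together with the parity-dependent appearance of the eigenvalue $1$ when $p$ is odd; the remainder of the argument is bookkeeping.
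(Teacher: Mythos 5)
Your proposal is correct and follows essentially the same route as the paper's proof: both view the block-diagonal matrix as a single Schur form, invoke \cref{lem:logGPhiGeneral} to parametrize its exponential preimages via the invariance group, and then use \cref{lem:invarianceAB} (with the disjoint-spectrum hypothesis guaranteed by $\varphi\notin\mathrm{diag}(\Phi)$, $\varphi\in(0,\pi]$, $\|\Phi\|_2\leq\pi$) to force that invariance group, and hence every preimage, to split block-diagonally. Your explicit verification of the spectral disjointness, including the boundary cases $\varphi=\pi$ and the extra eigenvalue $1$ for odd $p$, is a detail the paper leaves implicit, but it does not change the argument.
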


\begin{proof}
For every diagonal matrix $K_1\in \mathbb{Z}^{k_1\times k_1}$ and $K\in\mathbb{Z}^{k\times k}$, it holds that
\begin{equation*}
	\begin{bmatrix}

\Omega_{2k_1}(\varphi I_{k_1}, K_1)&0\\
0&\Omega_p (\Phi, K)
\end{bmatrix}\in \exp_\mathrm{m}^{-1}\begin{bmatrix}
	G_{2k_1}(\varphi I_{k_1})&0\\
	0&G_p(\Phi)
	\end{bmatrix}\cap \begin{bmatrix}
	\exp_\mathrm{m}^{-1} (G_{2k_1}(\varphi I_{k_1}))&0\\
	0&\exp_\mathrm{m}^{-1}(G_p(\Phi))
	\end{bmatrix}.
\end{equation*}
By \cref{lem:logGPhiGeneral}, it is thus enough to prove that $\mathrm{ig}\left(\left[\begin{smallmatrix}
	G_{2k_1}(\varphi I_{k_1})&0\\
	0&G_p(\Phi)
	\end{smallmatrix}\right]\right)= \left[\begin{smallmatrix}
	\mathrm{ig}(G_{2k_1}(\varphi I_{k_1}))&0\\
	0&\mathrm{ig}(G_p(\Phi))
	\end{smallmatrix}\right]$. The later holds by \cref{lem:invarianceAB}. This concludes the proof.
\end{proof}
\section{The \texorpdfstring{$\beta$}{beta}-length of geodesic loops for \texorpdfstring{$\beta\in(2,\infty)$}{beta in (2,oo)}}
\label{sec:closedgeod_beta>2}
As in \Cref{sec:betalength<=2}, we restrict to $2\leq p\leq n-1$. In this section, we show that the length of a shortest geodesic loop on $\mathrm{St}_\beta(n,p)$, for $\beta\in(2,\infty)$, is $2\pi$, see \cref{thm:closedgeod_beta>2}. We proceed in two phases. In the first phase, we study the structure of shortest geodesic loops to simplify the problem as much as possible. This is \Cref{sec:structured_loop}. In the second phase, we analyze the simplified problem to prove that shortest geodesic loops have length $2\pi$. This is \Cref{sec:shortest_loops_g2}.

\subsection{The structure of shortest geodesic loops}\label{sec:structured_loop}
We first prove that every lower bound on the length of the shortest geodesic loops on $\mathrm{St}_\beta(2p,p)$ is a lower bound on the length of shortest geodesic loops on $\mathrm{St}_\beta(n,p)$. We can thus concentrate on geodesic loops on the Stiefel manifold with $n=2p$.
\begin{lemma}\label[lemma]{lem:nxp_to_2pxp}
    Recall that $2\leq p \leq n - 1$ and let $\ell_\beta$ be the length of the shortest geodesic loop starting at $I_{n\times p}$ in $\mathrm{St}_\beta(n,p)$ and $\widehat{\ell}_\beta$ the length of the shortest geodesic loop starting at $I_{2p\times p}$ in $\mathrm{St}_\beta(2p,p)$. Then, $\ell_\beta \geq \widehat{\ell}_\beta$.
\end{lemma}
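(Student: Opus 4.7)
The plan is to show that every geodesic loop on $\mathrm{St}_\beta(n,p)$ based at $I_{n\times p}$ can be matched with a geodesic loop on $\mathrm{St}_\beta(2p,p)$ based at $I_{2p\times p}$ of length at most as large, which immediately yields $\widehat{\ell}_\beta\leq\ell_\beta$. Since the case $n=2p$ is trivial, the argument splits into the sub-cases $n<2p$ and $n>2p$.

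For $n<2p$, the natural move is zero-padding. Define $\iota\colon\mathrm{St}(n,p)\to\mathrm{St}(2p,p)$ by $\iota(U)=\left[\begin{smallmatrix}U\\0_{(2p-n)\times p}\end{smallmatrix}\right]$, which sends $I_{n\times p}$ to $I_{2p\times p}$ and pushes a tangent $\left[\begin{smallmatrix}A\\B\end{smallmatrix}\right]$ at $I_{n\times p}$ to $\left[\begin{smallmatrix}A\\B\\0\end{smallmatrix}\right]$ at $I_{2p\times p}$, preserving the $\beta$-norm. Plugging the padded tangent into the geodesic formula and using that the exponential of a matrix with a trailing zero diagonal block is block-diagonal with a trailing identity block, the geodesic on $\mathrm{St}_\beta(2p,p)$ is exactly $\iota(\gamma_\beta)$. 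Loops thus correspond to loops of identical duration and speed, hence identical $\beta$-length.

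For $n>2p$, I would exploit the isometric action of the stabilizer $\{I_p\}\times\mathrm{O}(n-p)$ of $I_{n\times p}$ on $\mathrm{St}_\beta(n,p)$ by left multiplication. Since $B\in\mathbb{R}^{(n-p)\times p}$ has rank at most $p\leq n-p$, one can choose $R_1\in\mathrm{O}(n-p)$ with $R_1B=\left[\begin{smallmatrix}\widetilde B\\0_{(n-2p)\times p}\end{smallmatrix}\right]$ for some $\widetilde B\in\mathbb{R}^{p\times p}$ (e.g.\ by extending an orthonormal basis of the column span of $B$). Replacing the loop by its image under $\mathrm{diag}(I_p,R_1)$ preserves the base point, nontriviality, and length, and yields a loop whose initial tangent has this block form. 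The same block-exponential calculation as above then shows the transformed loop equals $\left[\begin{smallmatrix}\widehat\gamma_\beta(t)\\0_{(n-2p)\times p}\end{smallmatrix}\right]$, where $\widehat\gamma_\beta$ is a geodesic loop on $\mathrm{St}_\beta(2p,p)$ with tangent $\left[\begin{smallmatrix}A\\\widetilde B\end{smallmatrix}\right]$ and length equal to that of the original.

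The main obstacle I foresee is verifying, in both cases, that the block-exponential identity $\exp_{\mathrm m}\!\left(\left[\begin{smallmatrix}M_{11}&0\\0&0\end{smallmatrix}\right]\right)=\left[\begin{smallmatrix}\exp_{\mathrm m}(M_{11})&0\\0&I\end{smallmatrix}\right]$ carries the zero-trailing-block tangent all the way through the full geodesic formula. The key observation is that the right factor $\exp_{\mathrm m}((1-2\beta)tA)$ touches only the $p\times p$ right side and therefore cannot perturb the trailing zero rows of the matrix on the left. Everything else is bookkeeping of block sizes together with a direct appeal to the homogeneity of the Stiefel manifold.
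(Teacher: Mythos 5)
Your proposal is correct and follows essentially the same route as the paper: zero-padding for $n<2p$, and for $n>2p$ a reduction of $B$ to a $p\times p$ block via an orthogonal transformation, which is exactly the paper's thin QR decomposition $B=QR$ (your $R_1$ is $[Q\ Q_\perp]^\top$ and your $\widetilde B$ is $R$). The only cosmetic difference is that you phrase the reduction as an isometry of the stabilizer $\{I_p\}\times\mathrm{O}(n-p)$ acting on the loop, whereas the paper verifies the equivalent matrix identity $\gamma_\beta(t)=\left[\begin{smallmatrix}I_p&0\\0&Q\end{smallmatrix}\right]\widehat{\gamma}_\beta(t)$ directly.
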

\begin{proof}
    Assume first $n\geq 2p$. For every tangent vector $\Delta =\left[\begin{smallmatrix}
        I_p\\
        0_{n-p\times p}
    \end{smallmatrix}\right]A + \left[\begin{smallmatrix}
        0_{p\times n-p}\\
        I_{n-p}
    \end{smallmatrix}\right] B\in T_{I_{n\times p}}\mathrm{St}(n,p)$, with $B\in\mathbb{R}^{(n-p)\times p}$, a thin QR decomposition yields $ B = QR$ with $Q\in\mathrm{St}(n-p,p)$ and $R\in\mathbb{R}^{p\times p}$. Therefore \begin{equation*}
        \gamma_\beta(t)\coloneq \mathrm{Exp}_{\beta,I_{n\times p}}\left(t\begin{bmatrix}
                A\\
                B
            \end{bmatrix}\right) = \begin{bmatrix}
                I_p&0\\
                0&Q
            \end{bmatrix} \mathrm{Exp}_{\beta,I_{2p\times p}}\left(t\begin{bmatrix}
                A\\
                R
            \end{bmatrix}\right)\eqcolon \begin{bmatrix}
                I_p&0\\
                0&Q
            \end{bmatrix} \widehat{\gamma}_\beta(t).
    \end{equation*}
    Moreover, since $\|B\|_\mathrm{F}=\|R\|_\mathrm{F}$, we have $L_\beta(\gamma_\beta)=L_\beta(\widehat{\gamma}_\beta)$ for all $\Delta\in\mathrm{St}(n,p)$ and it holds that $\ell_\beta\geq \widehat{\ell}_\beta$ if $n\geq 2p$.
    
    Assume now that $n<2p$. Notice that for every geodesic $\gamma_\beta$ of $\mathrm{St}_\beta(n,p)$, the padded curve
    \begin{equation*}
        \widehat{\gamma}_\beta(t)\coloneq\begin{bmatrix}
            \gamma_\beta(t)\\
            0_{(2p-n)\times p}
        \end{bmatrix},
    \end{equation*}
    is a geodesic of $\mathrm{St}_\beta(2p,p)$. Finally, since $L_\beta(\gamma_\beta)=L_\beta(\widehat{\gamma}_\beta)$, it is clear that $\ell_\beta \geq \widehat{\ell}_\beta$.
\end{proof}
By \cref{lem:nxp_to_2pxp}, a lower bound on the length $\widehat{\ell}_\beta$ of shortest geodesic loops in $\mathrm{St}_\beta(2p,p)$ gives a lower bound on the length $\ell_\beta$ of shortest geodesic loops in $\mathrm{St}_\beta(n,p)$, for $2\leq p\leq n-1$. Therefore, if a geodesic loop in $\mathrm{St}_\beta(n,p)$ has a length equal to $\widehat{\ell}_\beta$, then its length is also equal to $\ell_\beta$. It is thus sufficient to consider geodesic loops on $\mathrm{St}_\beta(2p,p)$ emanating from $I_{2p\times p}$,
\begin{equation}\label{eq:geodesic}
    \gamma_\beta(t)\coloneq \mathrm{Exp}_{\beta, I_{2p\times p}}\left(t\begin{bmatrix}
    A\\
    B
\end{bmatrix}\right)= \exp_\mathrm{m}\left(t\begin{bmatrix}
    2\beta A & -B^\top\\
    B & 0
\end{bmatrix}\right)I_{2p\times p}\exp_\mathrm{m}(t(1-2\beta)A),
\end{equation}
with $A\in\mathrm{Skew}(p)$ and $B\in\mathbb{R}^{p\times p}$. The definition of a geodesic loop is that it starts and ends in the same point. When parameterized on the unit interval, $\gamma_\beta(1) = \gamma_\beta(0)=I_{2p\times p}$. This yields the following equation
\begin{equation}\label{eq:problem_large}
\exp_\mathrm{m}\begin{bmatrix}
2\beta A & - B^\top\\
B        & 0
\end{bmatrix} I_{2p\times p}
\exp_\mathrm{m}((1-2\beta)A) = I_{2p\times p}.
\end{equation}
A shortest geodesic loop features $L(\gamma_\beta) = \sqrt{\beta\Vert A\Vert_\mathrm{F}^2 + \Vert B\Vert_\mathrm{F}^2}$ as small as possible. 
Define
\begin{align*}
    E &\coloneq \exp_\mathrm{m}\begin{bmatrix}
        2\beta A & -B^\top\\
        B & 0
    \end{bmatrix} = \begin{bmatrix}
        E_{11} & E_{12}\\
        E_{21}& E_{22}
    \end{bmatrix} \in \mathrm{SO}(2p),\quad \text{and}\\
    \widetilde{E} &\coloneq \exp_\mathrm{m}((1-2\beta)A)\in \mathrm{SO}(p).
\end{align*}
Now, equation~\eqref{eq:problem_large} holds if and only if
\begin{align*}
    \begin{bmatrix}
        E_{11} & E_{12}\\
        E_{21}& E_{22}
    \end{bmatrix}\begin{bmatrix}
        \widetilde{E}\\
        0
    \end{bmatrix} = \begin{bmatrix}
        I_p\\0
    \end{bmatrix},
\end{align*}
which implies $E_{11}\widetilde{E}= I_p$ and $E_{21} \widetilde{E} = 0$. 
Since $\widetilde{E}$ is orthogonal, we obtain $E_{11} = \widetilde{E}^\top\in \mathrm{SO}(p)$ and $E_{12}=E_{21}^\top = 0$. Finally, we have
\begin{align*}
    E = \begin{bmatrix}
        E_{11} & 0\\
        0 & E_{22}
    \end{bmatrix},\quad \text{and} \quad \widetilde{E} = E_{11}^\top,
\end{align*}
with $E_{11}\in \mathrm{SO}(p)$, $E_{22}\in \mathrm{SO}(p)$. 
Both $E_{11}$ and $E_{22}$ admit a real Schur decomposition such that
\begin{equation}\label{eq:schur_forms}
    E_{11} = V_1 G_p(\Phi_1) V_1^\top,\quad \text{and} \quad E_{22} =  V_2 G_p(\Phi_2) V_2^\top,
\end{equation}
where $V_1,V_2\in\mathrm{SO}(p)$ and $\Phi_1,\Phi_2 \in\mathbb{R}^{k\times k}$ are nonnegative diagonal matrices, $k\coloneq \lfloor\frac{p}{2}\rfloor$ and $\|\Phi_1\|_2,\|\Phi_2\|_2\leq \pi$. We now state a lemma on the isometry of curves.
\begin{property}\label[property]{lem:curve_isometry}
For all $\Delta =\left[\begin{smallmatrix}
            A\\
            B
        \end{smallmatrix}\right]\in T_{I_{2p\times p}}\mathrm{St}_\beta (2p,p)$, all $t\in\mathbb{R}$ and all $V_1,V_2\in\mathrm{SO}(p)$, we have
    \begin{equation}\label{eq:curve_isometry}
     \gamma_\beta(t)\coloneq \mathrm{Exp}_{\beta, I_{2p\times p}}\left(t\begin{bmatrix}
            A\\
            B
        \end{bmatrix}\right)= \begin{bmatrix}
            V_1&0\\
            0&V_2
        \end{bmatrix}\mathrm{Exp}_{\beta, I_{2p\times p}}\left(t\begin{bmatrix}
            V_1^\top A V_1\\
            V_2^\top B V_1
        \end{bmatrix}\right) V_1^\top\eqcolon \begin{bmatrix}
            V_1&0\\
            0&V_2
        \end{bmatrix}\widetilde{\gamma}_\beta(t) V_1^\top.
    \end{equation}
    Moreover, $L_\beta(\gamma_\beta) = L_\beta(\widetilde{\gamma}_\beta)$ and if $\gamma_\beta$ is a geodesic loop, so is $\widetilde{\gamma}_\beta$.
\end{property}
\begin{proof}
First, it holds that
\begin{align*}
    \mathrm{Exp}_{\beta, I_{2p\times p}}\left(t\begin{bmatrix}
            V_1^\top A V_1\\
            V_2^\top B V_1
        \end{bmatrix}\right) &= \exp_\mathrm{m}\begin{bmatrix}
2\beta  V_1^\top A V_1 & - ( V_2^\top B V_1)^\top\\
 V_2^\top B V_1        & 0
\end{bmatrix} I_{2p\times p}
\exp_\mathrm{m}((1-2\beta) V_1^\top A V_1) \\
&=  \begin{bmatrix}
            V_1^\top&0\\
            0&V_2^\top 
        \end{bmatrix}\exp_\mathrm{m}\begin{bmatrix}
2\beta A & - B^\top\\
B        & 0
\end{bmatrix} I_{2p\times p}
\exp_\mathrm{m}((1-2\beta)A) V_1 \\
&=  \begin{bmatrix}
            V_1^\top&0\\
            0&V_2^\top 
        \end{bmatrix} \mathrm{Exp}_{\beta, I_{2p\times p}}\left(t\begin{bmatrix}
            A\\
            B
        \end{bmatrix}\right) V_1.
\end{align*}
Moreover,
    \begin{equation*}
        L_\beta (\widetilde{\gamma}_\beta)^2 = \beta \| V_1^\top A V_1\|_\mathrm{F}^2 + \|V_2^\top B V_1\|_\mathrm{F}^2 = \beta \|A\|_\mathrm{F}^2 + \|B\|_\mathrm{F}^2 = L_\beta (\gamma_\beta)^2.
    \end{equation*}
Finally, if $\gamma_\beta$ is a geodesic loop such that $\gamma_\beta(0) = \gamma_\beta(1)=I_{2p\times p}$, by~\eqref{eq:curve_isometry}, we also have $\widetilde{\gamma}_\beta(0) = \widetilde{\gamma}_\beta(1)=I_{2p\times p}$.
\end{proof}

In view of \cref{lem:curve_isometry}, for every geodesic loop $\gamma_\beta$, considering $V_1,V_2$ from~\eqref{eq:schur_forms}, there is another geodesic loop $\widetilde{\gamma}_\beta$, with initial velocity $\widetilde{\Delta} = \left[\begin{smallmatrix}
    \widetilde{A}\\
    \widetilde{B}
\end{smallmatrix}\right]$ that has the same length as $\gamma_\beta $ and such that 
\begin{equation}\label{eq:schur_geodesic}
    \exp_\mathrm{m}\begin{bmatrix}
        2\beta \widetilde{A}&-\widetilde{B}^\top\\
        \widetilde{B}&0
    \end{bmatrix} = \begin{bmatrix}
        G_p(\Phi_1)&0\\
        0&G_p(\Phi_2)
    \end{bmatrix},\quad \text{and}\quad \exp((1-2\beta)\widetilde{A}) = G_p(-\Phi_1).
\end{equation}
For the rest of this section, it is thus sufficient to consider geodesic loops that satisfy the structure of~\eqref{eq:schur_geodesic}.
Moreover, by means of \cref{lem:reduced_geodesic}, we prove that to study shortest geodesic loops, we can consider even more structure. More precisely, it is enough to consider that $\Phi_1$ and $\Phi_2$ in~\eqref{eq:schur_geodesic} are multiples of the identity matrix.
\begin{lemma}\label[lemma]{lem:reduced_geodesic}
Let $p\leq n-1$ and $k\coloneq \lfloor \frac{p}{2}\rfloor $, $A\in\mathrm{Skew}(p)$ and $B\in\mathbb{R}^{p\times p}$, such that
\begin{equation}\label{eq:hypot}
	\exp_\mathrm{m}\begin{bmatrix}
	2\beta A&-B^\top\\
	B&0
	\end{bmatrix}=\begin{bmatrix}
	G_p(\Phi_1)&0\\
	0&G_p(\Phi_2)
	\end{bmatrix}, \quad \text{and} \quad \exp_\mathrm{m}((1-2\beta)A)=G_p(-\Phi_1),
\end{equation}
where $\Phi_1,\Phi_2\in\mathbb{R}^{k \times k}$ are (w.l.o.g., nonnegative) diagonal matrices such that $\|\Phi_1\|_2,\|\Phi_2\|_2\leq\pi$. 
Then, for every $\varphi\neq 0$ that has a multiplicity $s_1\geq 1$ in the vector $\mathrm{diag}(\Phi_1)$ and a multiplicity $s_2\geq 0$ in $\mathrm{diag}(\Phi_2)$, there is a $2s_1\times 2s_1$ submatrix $\widehat{A}$ of $A$ and a $2s_2\times 2s_1$ submatrix $\widehat{B}$ of $B$ such that
\begin{equation}\label{eq:reduced_loop}
	\exp_\mathrm{m}\begin{bmatrix}
	2\beta \widehat{A}&-\widehat{B}^\top\\
	\widehat{B}&0
	\end{bmatrix}=\begin{bmatrix}
	G_{2s_1}(\varphi I_{s_1})&0\\
	0&G_{2s_2}(\varphi I_{s_2})
	\end{bmatrix}, \quad \text{and} \quad \exp_\mathrm{m}((1-2\beta)\widehat{A})=G_{2s_1}(-\varphi I_{s_1}).
\end{equation}
\end{lemma}
\begin{proof}
Assume $\varphi\neq 0$ has a multiplicity $s_1$ in the vector $\Phi_1$ and a multiplicity $s_2$ in $\Phi_2$, and let $m\coloneq s_1+s_2$. Then, there is a permutation $P_{\varphi}$ such that for some $\widetilde{\Phi}\in\mathbb{R}^{(p-m)\times(p-m)}$, we have
	\begin{equation}\label{eq:permutation}
	P_\varphi^\top \begin{bmatrix}
	G_p(\Phi_1)&0\\
	0&G_p(\Phi_2)
	\end{bmatrix}P_\varphi = \begin{bmatrix}
	G_{2s_1}(\varphi I_{s_1})&0&0\\
	0&G_{2s_2}(\varphi I_{s_2})&0\\
	0&0&G_{2(p-m)}(\widetilde{\Phi})
	\end{bmatrix}.
	\end{equation}
    Notice that if $p$ is odd, $\widetilde{\Phi} $ has the diagonal entry zero at least twice. Moreover, it is understood that if $s_2=0$ or $p-m = 0$, then the associated matrix blocks vanish.
    
    By hypothesis~\eqref{eq:hypot}, it follows that
	\begin{equation}\label{eq:skewpermuted}
		\begin{bmatrix}
	2\beta A&-B^\top\\
	B&0
	\end{bmatrix}\in \exp_\mathrm{m}^{-1}\begin{bmatrix}
	G_p(\Phi_1)&0\\
	0&G_p(\Phi_2)
	\end{bmatrix}.
	\end{equation}
By combining~\eqref{eq:permutation} and~\eqref{eq:skewpermuted}, we obtain
	\begin{equation}
		X_\varphi \coloneq P_{\varphi}^\top \begin{bmatrix}
	2\beta A&-B^\top\\
	B&0
	\end{bmatrix} P_{\varphi}  \in \exp_\mathrm{m}^{-1}\begin{bmatrix}
	G_{2s_1}(\varphi I_{s_1})&0&0\\
	0&G_{2s_2}(\varphi I_{s_2})&0\\
	0&0&G_{2(p-m)}(\widetilde{\Phi})
	\end{bmatrix}.
	\end{equation}
	Notice that $\varphi$ is not a diagonal entry of $\widetilde{\Phi}$. Hence, we are able to apply \cref{lem:logmatrix} to obtain that $X_\varphi$ is block-diagonal and that the $2m\times 2m$ upper left block of $X_\varphi$ belongs to the set $\exp_\mathrm{m}^{-1}\left[\begin{smallmatrix}
	G_{2s_1}(\varphi I_{s_1})&0\\
	0&G_{2s_2}(\varphi I_{s_2})
	\end{smallmatrix}\right]$. 
    Moreover, if we require $P_\varphi$ to satisfy that $G_{2s_1}(\varphi I_{s_1})$ is only obtained from matrix elements of $G_p(\Phi_1)$, this upper left block of $X_\varphi$ conserves a structure of the form $\left[\begin{smallmatrix}
	2\beta \widehat{A}& -\widehat{B}^\top\\	
	\widehat{B}&0
	\end{smallmatrix}\right]$ where $\widehat{A}\in \mathrm{Skew}(2s_1)$ and $\widehat{B}\in\mathbb{R}^{2s_2\times 2s_1}$ are, respectively, sub-blocks of $A$ and $B$. We have thus shown that \begin{equation*}   
	\exp_\mathrm{m}\begin{bmatrix}
	2\beta \widehat{A}&-\widehat{B}^\top\\
	\widehat{B}&0
	\end{bmatrix}=\begin{bmatrix}
	G_{2s_1}(\varphi I_{s_1})&0\\
	0&G_{2s_2}(\varphi I_{s_2})
	\end{bmatrix}.
    \end{equation*}
    
By definition of a geodesic loop, we can write
	
	\begin{equation*}
	\left( P_\varphi^\top \exp_\mathrm{m}\begin{bmatrix}
	2\beta A&-B^\top\\
	B&0
	\end{bmatrix}P_\varphi\right)\left( P_\varphi^\top\exp_\mathrm{m}\begin{bmatrix}
	(1-2\beta)A&0\\
	0&0_p
	\end{bmatrix}P_\varphi\right) P_\varphi^\top I_{2p\times p}= P_\varphi^\top I_{2p\times p}.
	\end{equation*}
    
	Notice that by definition of $P_\varphi$ and by the hypothesis~\eqref{eq:hypot}, we have
	\begin{equation}\label{eq:rightmatrix}
	P_\varphi^\top\exp_\mathrm{m}\begin{bmatrix}
	(1-2\beta)A&0\\
	0&0_p
	\end{bmatrix}P_\varphi = P_\varphi^\top\begin{bmatrix}
	G_p(-\Phi_1)&0\\
	0&I_{p}
	\end{bmatrix}P_\varphi = \begin{bmatrix}
	G_{2s_1}(-\varphi I_{s_1})&0&0\\
	0&I_{2s_2}&0\\
	0&0& *
	\end{bmatrix}.
	\end{equation}
	Moreover, the $2m\times 2m$ upper-left block of $P_\varphi^\top\begin{bmatrix}
	(1-2\beta)A&0\\
	0&0
	\end{bmatrix}P_\varphi$ has also the block structure $ \left[\begin{smallmatrix}
	(1-2\beta) \widehat{A}&0_{s_1\times s_2}\\	
	0_{s_2\times s_1}&0
	\end{smallmatrix}\right]$ where $\widehat{A}$ is the same matrix as previously. By~\eqref{eq:rightmatrix} and \cref{lem:logmatrix}, this yields $\exp_\mathrm{m}((1-2\beta)\widehat{A})=G_{2s_1}(-\varphi I_{s_1})$.
	
	Finally, it follows that
	\begin{equation*}
	\exp_\mathrm{m} \begin{bmatrix}
	2\beta \widehat{A}&-\widehat{B}^\top\\	
	\widehat{B}&0
	\end{bmatrix}=\begin{bmatrix}
	G_{2s_1}(\varphi I_{s_1})&0\\
	0&G_{2s_2}(\varphi I_{s_2})
	\end{bmatrix} \quad\text{and} \quad \exp_\mathrm{m}((1-2\beta)\widehat{A})=G_{2s_1}(-\varphi I_{s_1}). 
	\end{equation*}

\end{proof}

Since $\widehat{A}$ and $\widehat{B}$ are submatrices of $A$ and $B$, respectively, the length of the geodesic loop defined by $\widehat{A}$ and $\widehat{B}$ is smaller than the length of the geodesic loop defined by $A$ and $B$,
\begin{align*}
    \beta\|A\|_\mathrm{F}^2 + \|B\|_\mathrm{F}^2 \geq \beta\|\widehat{A}\|_\mathrm{F}^2 + \|\widehat{B}\|_\mathrm{F}^2.
\end{align*}
Note that by design the matrix $\widehat{A}$ is nonzero.

\begin{remark}\label[remark]{rem:disjunctPhi2_is_zero}
    From equation~\eqref{eq:skewpermuted}, we can deduce that if a diagonal entry $\breve{\varphi}$ of $\Phi_2$ is not in $\Phi_1$, then $\breve{\varphi} = 0$. Equivalently, all nonzero diagonal entries of $\Phi_2$ are in $\Phi_1$.
    Indeed, assume that
    \begin{align*}
        \Phi_2 = \begin{bmatrix}
            \widehat{\Phi}_2 & 0\\
            0 & \Breve{\Phi}_2
        \end{bmatrix},
    \end{align*}
    and assume that the spectrum of $\Breve{\Phi}_2$ does not contain zero and is disjoint from the spectra of $\widehat{\Phi}_2$ and $\Phi_1$. Since $[0,\pi] \ni \varphi \mapsto \{e^{\imath \varphi}, e^{-\imath \varphi}\}$ is injective, it follows that the matrices $\begin{bmatrix}
        G_p(\Phi_1) & 0\\
        0 & G_{q_1}(\widehat{\Phi}_2)
    \end{bmatrix}$ and $G_{q_2}(\Breve{\Phi}_2)$ also have disjoint spectra, with suitable dimensions $q_1,q_2\geq 0$ with $p = q_1+q_2$. Note that $q_1$ is odd, if $p$ is odd.
    By equation~\eqref{eq:skewpermuted} and a similar argument as in the proof of \cref{lem:logmatrix}, we obtain
    \begin{align*}
        \begin{bmatrix}
            2\beta A & -B^\top\\
            B & 0
        \end{bmatrix} \in\exp_\mathrm{m}^{-1}\begin{bmatrix}
            G_p(\Phi_1) & 0\\
            0 & G_p(\Phi_2)
        \end{bmatrix} = \begin{bmatrix}
            \exp_\mathrm{m}^{-1}\begin{bmatrix}
                G_p(\Phi_1) & 0\\
                0 & G_{q_1}(\widehat{\Phi}_2)
            \end{bmatrix} & 0\\
            0 & \exp_\mathrm{m}^{-1}(G_{q_2}(\Breve{\Phi}_2))
        \end{bmatrix}.
    \end{align*}
    So, by identification of the matrix blocks, $0_{q_2\times q_2}\in \exp_\mathrm{m}^{-1}(G_{q_2}(\Breve{\Phi}_2))$ and thus $\Breve{\Phi}_2= 0$. This contradicts the assumption that zero is not in the spectrum of $\Breve{\Phi}_2$. Hence, $\Breve{\Phi}_2$ can not exist.
\end{remark}
\subsection{The \texorpdfstring{$\beta$}{beta}-length of shortest geodesic loops (\texorpdfstring{$\beta\in(2,\infty)$}{beta in (2,oo)})}\label{sec:shortest_loops_g2}

First, we tackle the cases where one of the matrices $A$ or $B$ is zero.
\begin{lemma}\label[lemma]{lem:AorBzero}
    Recall that $\beta >2$ and $2\leq p\leq n-1$. Let $\gamma_\beta$ be a geodesic loop on the Stiefel manifold $\mathrm{St}_\beta(n,p)$ with initial velocity $\Delta =\left[\begin{smallmatrix}
        A\\
        B
    \end{smallmatrix}\right]\in T_{I_{n\times p}}\mathrm{St}_\beta(n,p)$.
    If $B=0$ and $A\neq 0$, then $L_\beta(\gamma_\beta)\geq \sqrt{2\beta}{2\pi}\,(>2\pi)$. Moreover, if $A=0$ and $B\neq 0$, then $L_\beta (\gamma_\beta)\geq 2\pi$. These bounds are sharp.
\end{lemma}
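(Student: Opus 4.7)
The plan is to treat the two subcases $B=0$ and $A=0$ separately, in each case exploiting that the geodesic formula collapses drastically so that the loop condition becomes a simple matrix-exponential equation.

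\textbf{Case $B=0$, $A\neq 0$.} Substituting $B=0$ into the geodesic formula and using that $2\beta A$ and $(1-2\beta)A$ commute, the geodesic simplifies to
\[
\gamma_\beta(t)=\begin{bmatrix}\exp_\mathrm{m}(2\beta tA)\\ 0\end{bmatrix}\exp_\mathrm{m}(t(1-2\beta)A)=\begin{bmatrix}\exp_\mathrm{m}(tA)\\ 0\end{bmatrix}.
\]
The loop condition $\gamma_\beta(1)=I_{n\times p}$ therefore reduces to $\exp_\mathrm{m}(A)=I_p$. Using a real Schur decomposition of the skew-symmetric matrix $A$, with diagonal of Schur-angles $\varphi_1,\ldots,\varphi_k\geq 0$, the condition $\exp_\mathrm{m}(A)=I_p$ forces every $\varphi_i$ to be an integer multiple of $2\pi$. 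Since $A\neq 0$, at least one $\varphi_i\geq 2\pi$, so $\|A\|_\mathrm{F}^2\geq 2(2\pi)^2=8\pi^2$. Then $L_\beta(\gamma_\beta)=\sqrt{\beta}\,\|A\|_\mathrm{F}\geq 2\pi\sqrt{2\beta}$, which exceeds $2\pi$ for $\beta>2$. Sharpness comes from choosing $A$ with a single nontrivial Schur block of angle $2\pi$ and zero elsewhere.

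\textbf{Case $A=0$, $B\neq 0$.} By \cref{lem:nxp_to_2pxp} it suffices to consider $n=2p$, so $B\in\mathbb{R}^{p\times p}$ is square. The geodesic becomes the metric-independent curve
\[
\gamma_\beta(t)=\exp_\mathrm{m}\!\left(t\begin{bmatrix}0 & -B^\top\\ B & 0\end{bmatrix}\right)I_{2p\times p}.
\]
Let $B=U_1\Sigma V^\top$ be an SVD with $U_1,V\in\mathrm{O}(p)$ and $\Sigma=\mathrm{diag}(\sigma_1,\ldots,\sigma_p)\geq 0$. Conjugating the block matrix by $\mathrm{diag}(V,U_1)$ replaces $B$ by $\Sigma$, and since $\Sigma$ is diagonal one computes directly
\[
\exp_\mathrm{m}\!\left(\begin{bmatrix}0 & -\Sigma\\ \Sigma & 0\end{bmatrix}\right)=\begin{bmatrix}\cos\Sigma & -\sin\Sigma\\ \sin\Sigma & \cos\Sigma\end{bmatrix}.
\]
The loop condition $\gamma_\beta(1)=I_{2p\times p}$ is then equivalent, after left-multiplying by $\mathrm{diag}(V^\top,U_1^\top)$, to $\cos\Sigma\, V^\top=V^\top$ and $\sin\Sigma\, V^\top=0$, which forces $\cos\Sigma=I_p$ and $\sin\Sigma=0$, i.e.\ each $\sigma_i\in 2\pi\mathbb{Z}_{\geq 0}$. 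Nonzero $B$ yields at least one $\sigma_i\geq 2\pi$, so $\|B\|_\mathrm{F}\geq 2\pi$, and $L_\beta(\gamma_\beta)=\|B\|_\mathrm{F}\geq 2\pi$. Sharpness is attained by taking a single singular value equal to $2\pi$.

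The only place that requires genuine care is the $A=0$ case, where we must handle the rectangular shape of $B$ and the fact that the SVD conjugation is carried out inside the exponential; the reduction to $n=2p$ via \cref{lem:nxp_to_2pxp} removes the rectangularity, and then the diagonal structure of $\Sigma$ makes the exponential trivially computable block by block, turning the loop condition into the elementary constraint on the $\sigma_i$ above.
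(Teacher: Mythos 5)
Your proof is correct. For the case $B=0$, $A\neq 0$ you follow essentially the paper's route: the loop condition collapses to $\exp_\mathrm{m}(A)=I_p$, skew-symmetry forces the Schur angles (equivalently the singular values, which come in pairs) to be nonnegative multiples of $2\pi$, so $A\neq 0$ gives $\|A\|_\mathrm{F}^2\geq 2(2\pi)^2$ and $L_\beta(\gamma_\beta)=\sqrt{\beta}\|A\|_\mathrm{F}\geq\sqrt{2\beta}\,2\pi$. Where you genuinely diverge is the case $A=0$, $B\neq 0$: the paper merely observes that such a loop is metric-independent, hence also a geodesic loop for the canonical metric $\beta=\tfrac12$, and cites the known $2\pi$ bound of Absil--Mataigne; you instead give a self-contained argument — reduce to $n=2p$, take an SVD $B=U_1\Sigma V^\top$, conjugate the block matrix to diagonalize the exponential, and read off that the loop condition forces $\cos\Sigma=I_p$ and $\sin\Sigma=0$, i.e.\ every $\sigma_i\in 2\pi\mathbb{Z}_{\geq 0}$, whence $\|B\|_\mathrm{F}\geq 2\pi$. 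Your version buys independence from the external reference at the cost of an explicit computation; the paper's citation is shorter but leans on the canonical-metric literature. One presentational gloss: your reduction to $n=2p$ invokes \cref{lem:nxp_to_2pxp}, whose \emph{statement} only compares lengths of shortest loops overall, whereas you need the loop-to-loop correspondence constructed in its \emph{proof} (thin QR, respectively zero-padding, acts only on the $B$-block, preserves the length, and preserves the property $A=0$); citing that construction rather than the bare statement closes this small gap, and nothing else in your argument is affected.
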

\begin{proof}
    If $A = 0$ (and $B\neq 0$), then $\gamma_\beta$ is also a geodesic loop on $\mathrm{St}_\frac{1}{2}(n, p)$ (canonical metric). Therefore, the length is immediately bounded from below by $2\pi$, cf.~\cite[Thm. 6.1]{absilmataigne2024ultimate}. An example of a geodesic loop of length $2\pi$ is given at the end of this section.

    If $B=0$ and $A\neq 0$, then we have $\gamma_\beta(1) = \left[\begin{smallmatrix}
        \exp_\mathrm{m}(A)\\
        0
    \end{smallmatrix}\right] = \left[\begin{smallmatrix}
        I_p\\
        0
    \end{smallmatrix}\right]$. Therefore, the singular values of $A$ are nonnegative integer multiples of $2\pi$, and, since $A\neq 0$, at least two singular values of $A$ are greater or equal to $2\pi$. Hence, $\|A\|_\mathrm{F}\geq \sqrt{2(2\pi^2)}\geq 2\pi\sqrt{2}$. Finally, $L_\beta(\gamma_\beta) = \sqrt{\beta\|A\|_\mathrm{F}^2}\geq \sqrt{2\beta}2\pi$.
\end{proof}

We now proceed to show, without restriction on $A$ and $B$, that the $\beta$-length of the shortest geodesic loops is bounded below by $2\pi$.
\Cref{sec:structured_loop} exposed the structure of the shortest geodesic loops. In particular, let $\gamma_\beta(t) \coloneq \mathrm{Exp}_{\beta,I_{2p\times p}}\left(t\left[\begin{smallmatrix}
    A\\B
\end{smallmatrix}\right]\right)$ be a geodesic loop which features the structure of~\eqref{eq:schur_geodesic}. If $\Phi_1\neq 0$, by \cref{lem:reduced_geodesic}, we can extract submatrices $\widehat{A}\in\mathrm{Skew}(p_1)$ and $\widehat{B}\in\mathbb{R}^{p_2\times p_1}$ from $A$ and $B$ such that the geodesic loop $\widehat{\gamma}_\beta(t)\coloneq\mathrm{Exp}_{\beta,U}\left(t\left[\begin{smallmatrix}
    \widehat{A}\\
    \widehat{B}
\end{smallmatrix}\right]\right)$ with $U\coloneq I_{p_1+p_2\times p_1}$ features the structure from~\eqref{eq:reduced_loop}. Here, $p_1 = 2s_1$, $p_2 = 2s_2$ for suitable $s_1,s_2\in\mathbb{N}$.
If $\Phi_1 = 0$ is zero, it also holds that the corresponding $\Phi_2 = 0$ is zero (see \cref{rem:disjunctPhi2_is_zero}). Hence, the geodesic loop $\gamma_\beta$ immediately exhibits a structure like \eqref{eq:reduced_loop}
\begin{align*}
    \exp_\mathrm{m}\begin{bmatrix}
	2\beta \widehat{A}&-\widehat{B}^\top\\
	\widehat{B}&0
	\end{bmatrix}=\begin{bmatrix}
	G_{p}(\varphi I_{k})&0\\
	0&G_{p}(\varphi I_{k})
	\end{bmatrix}, \quad \text{and} \quad \exp_\mathrm{m}((1-2\beta)\widehat{A})=G_{p}(-\varphi I_{k}),
\end{align*}
with $\varphi = 0$, $k \coloneq \lfloor\frac{p}{2}\rfloor$ and $\widehat{A} \coloneq A\in\mathrm{Skew}(p_1)$, $\widehat{B} \coloneq B\in\mathbb{R}^{p_2\times p_1}$, with $p_1 = p_2 = p$.

We show that $L_\beta(\widehat{\gamma}_\beta)\geq 2\pi$ and, hence, $L_\beta(\gamma_\beta)\geq 2\pi$. The cases in which either $\widehat{A}$ or $\widehat{B}$ is zero are treated in \cref{lem:AorBzero}. 
If $\widehat{B}$ is the empty matrix ($s_2 = 0$ in \cref{lem:reduced_geodesic}), we can argue analogously to the proof of \cref{lem:AorBzero} to show that the length of the geodesic loop is bounded below by $\sqrt{2\beta}2\pi$. In \cref{lem:length_bound_phi}, we show that $L_\beta(\widehat{\gamma}_\beta) > 2\pi$ if $\widehat{A}\neq0$ and $\widehat{B}\neq 0$. To prove this, we first need to introduce a result that is a direct consequence of the interlacing property of singular values.

\begin{lemma}\label[lemma]{lem:interlacing}
    Recall that $\beta > 2$. Let $p_1,p_2\in\mathbb{N}$ be positive integers and let $A\in\mathrm{Skew}(p_1)$ be a skew-symmetric matrix and $B\in\mathbb{R}^{p_2\times p_1}$ be arbitrary. Define the matrix $X \coloneq \begin{bmatrix}
        2\beta A & -B^\top\\
        B & 0
    \end{bmatrix}$ and $\breve{A} \coloneq (1-2\beta)A$. If we assume that the singular values of $X$ are given by $\sigma_1(X)\geq\dots\geq\sigma_{p_1+p_2}(X)\geq0$ and the singular values of $\breve{A}$ are given by $\sigma_1(\breve{A})\geq\dots\geq\sigma_{p_1}(\breve{A})\geq 0$, then it holds
    \begin{align*}
        \frac{2\beta}{2\beta-1}\sigma_j(\breve{A})\leq\sigma_j(X),\quad\text{for }j = 1,\dots,p_1
    \end{align*}
\end{lemma}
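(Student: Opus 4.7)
The plan is to recognize the matrix $2\beta A$ as a principal submatrix of $X$ and invoke the interlacing property of singular values for submatrices (often called Thompson's interlacing theorem, a singular-value analogue of Cauchy interlacing). One convenient statement: if $Y$ is obtained from a matrix $X$ by deleting any collection of rows and/or columns, then $\sigma_j(Y) \le \sigma_j(X)$ for every index $j$ for which $\sigma_j(Y)$ is defined. This can be established via the Courant--Fischer min-max characterization of the singular values of $X^\top X$ and $Y^\top Y$, or iteratively from the single-row/column deletion case.

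First I would observe that $2\beta A$ is exactly the upper-left $p_1\times p_1$ block of the $(p_1+p_2)\times(p_1+p_2)$ matrix $X$, obtained by deleting the last $p_2$ rows and the last $p_2$ columns. By the interlacing property recalled above, this yields
\begin{equation*}
\sigma_j(2\beta A) \;\le\; \sigma_j(X), \qquad j = 1,\dots,p_1.
\end{equation*}

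Next I would translate this bound into the claimed form. Since $\beta > 2$, we have $2\beta - 1 > 0$, so the scalar factors come out of the singular values without sign issues: $\sigma_j(\breve{A}) = \sigma_j((1-2\beta)A) = (2\beta-1)\sigma_j(A)$ and $\sigma_j(2\beta A) = 2\beta\,\sigma_j(A)$. Therefore $\sigma_j(2\beta A) = \tfrac{2\beta}{2\beta-1}\sigma_j(\breve{A})$, and substituting into the interlacing inequality gives exactly
\begin{equation*}
\tfrac{2\beta}{2\beta-1}\sigma_j(\breve{A}) \;\le\; \sigma_j(X), \qquad j=1,\dots,p_1,
\end{equation*}
as required.

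There is no real obstacle here; the proof is essentially a one-line invocation of the submatrix singular-value interlacing theorem together with the scalar identity $2\beta = \tfrac{2\beta}{2\beta-1}(2\beta-1)$. The only care needed is to state or cite the interlacing result in the exact ``delete rows \emph{and} columns'' form, which follows immediately by two applications of the standard row-deletion interlacing (first delete the $p_2$ extra rows, then delete the $p_2$ extra columns, or equivalently apply min-max to $X^\top X$ and its $p_1\times p_1$ principal submatrix $(2\beta A)^\top (2\beta A) + B^\top B \succeq (2\beta A)^\top (2\beta A)$, which even gives the stronger pointwise estimate on $A^\top A$).
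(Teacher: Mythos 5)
Your proof is correct and follows essentially the same route as the paper, which simply cites the singular-value interlacing property for submatrices (Horn--Johnson, Corollary 7.3.6) applied to the principal block $2\beta A$ of $X$, combined with the scalar identity $\sigma_j(\breve{A}) = (2\beta-1)\sigma_j(A)$. Your write-up just makes explicit the row/column deletion step and the positive-scaling step that the paper leaves implicit.
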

\begin{proof}
    This is a direct consequence of the interlacing property of singular values~\cite[Corollary 7.3.6]{Horn_Johnson_2012}
\end{proof}

Next, we show that the length of the loop $\widehat{\gamma}_\beta$ is also bounded below by $2\pi$ if $\widehat{A}\neq 0$ and $\widehat{B}\neq 0$.

\begin{lemma} \label[lemma]{lem:length_bound_phi}
    Recall that $\beta > 2$. Let $\widehat{A}\in\mathrm{Skew}(p_1)$ and $\widehat{B}\in\mathbb{R}^{p_2\times p_1}$ be the two matrices from above. So, they define the geodesic loop $\widehat{\gamma}_\beta(t)\coloneq\mathrm{Exp}_{\beta,U}\left(t\left[\begin{smallmatrix}
    \widehat{A}\\
    \widehat{B}
    \end{smallmatrix}\right]\right)$, with $U \coloneq I_{p_1+p_2\times p_1}$, that satisfies the structure
    \begin{align}\label{eq:struct_geod_lemmabeta>2}
        \exp_{\mathrm{m}}\begin{bmatrix}
            2\beta\widehat{A} & -\widehat{B}^\top\\
            \widehat{B} & 0
        \end{bmatrix} = \begin{bmatrix}
            G_{p_1}(\varphi I_{s_1}) & 0\\
            0 & G_{p_2}(\varphi I_{s_2})
        \end{bmatrix},\quad\text{and}\quad \exp_{\mathrm{m}}((1-2\beta)\widehat{A}) = G_{p_1}(-\varphi I_{s_1}),
    \end{align}
    where $\varphi\in[0,\pi]$ and $s_1 \coloneq \lfloor\frac{p_1}{2}\rfloor$, $s_2 \coloneq \lfloor\frac{p_2}{2}\rfloor$.
    If we assume that $\widehat{A}$ and $\widehat{B}$ are nonzero, then the length of the geodesic loop $\widehat{\gamma}_\beta$ is greater than $2\pi$, i.e.,
    \begin{align*}
       L_\beta(\widehat{\gamma}_\beta) > 2\pi.
    \end{align*}
\end{lemma}
\begin{proof}
    Recall that $p_1 = p_2 = p$, if $\varphi = 0$, and $p_1 = 2s_1$, $p_2 = 2s_2$, if $\varphi\in(0,\pi]$.
    
    With $X \coloneq \begin{bmatrix}
        2\beta\widehat{A} & - \widehat{B}^\top\\
        \widehat{B} & 0
    \end{bmatrix}$ and $\breve{A} \coloneq (1-2\beta)\widehat{A}$,
    we have 
    $ \|\widehat{B}\|_\mathrm{F}^2 = \frac12\|X\|_\mathrm{F}^2 - 2\beta^2\|\widehat{A}\|_\mathrm{F}^2$ and thus
    \begin{equation}\label{eq:length_from_X}
         L_\beta(\widehat{\gamma}_\beta) \coloneq  \sqrt{\beta\|\widehat{A}\|_\mathrm{F}^2 + \|\widehat{B}\|_\mathrm{F}^2} = \sqrt{\frac12\|X\|_\mathrm{F}^2 - \beta(2\beta-1)\|\widehat{A}\|_\mathrm{F}^2}=\sqrt{\frac12\|X\|_\mathrm{F}^2 - \frac{\beta}{2\beta-1}\|\breve{A}\|_\mathrm{F}^2}.
    \end{equation}
    Equation~\eqref{eq:length_from_X} is a function of the singular values of $X$ and $\breve{A}$. 
    Since $X$ and $\breve{A}$ are skew-symmetric and their matrix exponential satisfies~\eqref{eq:struct_geod_lemmabeta>2}, the singular values of $X$ come in pairs 
    \begin{align}\label{eq:singvals_X}
    |\varphi + 2\pi k_j|,|\varphi + 2\pi k_j|,\text{ for } k_j\in\mathbb{Z} \text{ and } j\in\{1,\dots,p_1+p_2\},
    \end{align}
    and so do the singular value of $\breve{A}$, 
    \begin{align}\label{eq:singvals_A}
    |\varphi + 2\pi\ell_j|,|\varphi+2\pi\ell_j|,\text{ for } \ell_j\in\mathbb{Z} \text{ and } j\in\{1,\dots,p_1\}.
    \end{align}
    If $p_1$ is odd, then $\breve{A}$ additionally has the singular value zero.
    We assume, w.l.o.g., that the singular value (pairs) are ordered by magnitude.  
    By \cref{lem:interlacing}, we know that if we compare the $s_1$ largest singular value pairs of $X$ with those of $\breve{A}$ in descending order, the singular value pair of $X$ is greater in magnitude than the singular value pair of $\breve{A}$ in each pairing. We show that there are pairings in which the difference between the values of the singular value pairs of $X$ and $\breve{A}$ is greater than $2\pi$.
    We start by showing this property for $\varphi = \pi$. Let $j_a\in\{1,\dots,s_1\}$ be arbitrary. By \cref{lem:interlacing}, 
    \begin{align*}
        0<|\pi + 2\pi\ell_{j_a}|<\frac{2\beta}{2\beta-1}|\pi + 2\pi \ell_{j_a}|\leq |\pi + 2\pi k_{j_a}|.
    \end{align*}
    For the integers $\ell_{j_a}$, $k_{j_a}$, we therefore have $|2\ell_{j_a}+1|<|2k_{j_a}+1|$ and thus
    \begin{align*}
        |\pi + 2\pi k_{j_a}| - |\pi + 2\pi\ell_{j_a}|\geq 2\pi.
    \end{align*}
    Next, let $\varphi = 0$. Since $\widehat{A}\neq 0$ is nonzero, there is a $j_a\in\{1,\dots,s_1\}$ such that the corresponding singular value pair of $\breve{A}$ is nonzero, $|2\pi\ell_{j_a}| > 0$. By \cref{lem:interlacing},
    \begin{align*}
        0 < |2\pi\ell_{j_a}| < \frac{2\beta}{2\beta-1}|2\pi\ell_{j_a}| \leq |2\pi k_{j_a}|.
    \end{align*}
    For the integers $\ell_{j_a}$ and $k_{j_a}$, we therefore have $|\ell_{j_a}| < |k_{j_a}|$, and thus
    \begin{align*}
        |2\pi k_{j_a}| - |2\pi \ell_{j_a}| \geq 2\pi.
    \end{align*}
    For $\varphi\in(0,\pi)$, there is also a $j_a\in\{1,\dots,s_1\}$ such that 
    \begin{align} \label{eq:sing_val_property_phi0pi}
        |\varphi + 2\pi k_{j_a}| - |\varphi + 2\pi\ell_{j_a}|\geq 2\pi.
    \end{align}
    To see this, we refer the reader to \cref{lem:singval_property} in Appendix \ref{app:singval_property}.
    With this property of the singular values of $X$ and $\breve{A}$, we show that the length of the geodesic loop $\widehat{\gamma}_\beta$ is greater than $2\pi$. 
    For any $\varphi\in[0,\pi]$, there exists a $j_a\in\{1,\dots,s_1\}$ corresponding to the $j_a$-th largest singular value pair of $X$ and $\breve{A}$ such that 
    \begin{align}\label{eq:singvalProperty}
        |\varphi + 2\pi k_{j_a}| - |\varphi + 2\pi\ell_{j_a}| \geq 2\pi.
    \end{align}
    By \cref{lem:interlacing},
    \begin{align}\label{eq:interlacing_for_j_a}
        \frac{2\beta}{2\beta-1}|\varphi + 2\pi\ell_{j_a}| \leq |\varphi + 2\pi k_{j_a}|.
    \end{align}
    So, by squaring and rearranging equation~\eqref{eq:singvalProperty}, we can make use of equation~\eqref{eq:interlacing_for_j_a} to obtain
    \begin{align*}
        |\varphi + 2\pi k_{j_a}|^2 &\geq 4\pi^2 + 2|\varphi + 2\pi k_{j_a}||\varphi + 2\pi\ell_{j_a}| - |\varphi + 2\pi\ell_{j_a}|^2\\
        &\geq 4\pi^2 + \left(2\frac{2\beta}{2\beta-1} - 1\right)|\varphi + 2\pi\ell_{j_a}|^2.
    \end{align*}
    It also follows from equation~\eqref{eq:interlacing_for_j_a} that
    \begin{align}\label{eq:squared_interlacing_connection}
        \frac{2\beta}{2\beta-1}|\varphi + 2\pi\ell_j|^2\leq\left(\frac{2\beta}{2\beta-1}|\varphi + 2\pi\ell_j|\right)^2\leq |\varphi + 2\pi k_j|^2,
    \end{align}
    for all $j\in\{1,\dots,s_1\}$. Hence, it follows by~\eqref{eq:length_from_X} that
    \begin{align*}
         L_\beta(\widehat{\gamma}_\beta)^2 &= \frac12\|X\|_\mathrm{F}^2 - \frac{\beta}{2\beta-1}\|\breve{A}\|_\mathrm{F}^2\\
        & = \sum_{j=1}^{s_1}\underbrace{\left(|\varphi + 2\pi k_j|^2 - \frac{2\beta}{2\beta-1}|\varphi +2\pi\ell_j|^2\right)}_{\geq 0\text{, see eq.~\eqref{eq:squared_interlacing_connection}}} + \sum_{j=s_1 + 1}^{\frac{p_1+p_2}{2}} \underbrace{|\varphi + 2\pi k_j|^2}_{\geq 0}\\
        &\geq |\varphi + 2\pi k_{j_a}|^2 - \frac{2\beta}{2\beta-1}|\varphi + 2\pi\ell_{j_a}|^2\\
        &\geq 4\pi^2 + \left(\frac{2\beta}{2\beta - 1} - 1\right)\underbrace{|\varphi + 2\pi\ell_{j_a}|^2}_{>0}\\
        &> 4\pi^2.
    \end{align*}
    The length of the geodesic loop $\widehat{\gamma}_\beta$ is thus greater than $2\pi$. This concludes the proof.
\end{proof}

In total, we proved the following:
\begin{theorem}\label{thm:closedgeod_beta>2}
    Under the standing assumptions that $\beta > 2$ and $2\leq p\leq n-1$, the length $\ell_\beta$ of a shortest geodesic loop on the Stiefel manifold $\mathrm{St}_\beta(n,p)$ is $\ell_\beta = 2\pi$.
\end{theorem}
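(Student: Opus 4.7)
The plan is to assemble the machinery developed in \cref{sec:structured_loop,sec:shortest_loops_g2} into a clean case analysis. First, note that the upper bound $\ell_\beta \leq 2\pi$ is already given by \eqref{eq:loop_upper_bound} for $\beta > 2$ (since $\min\{\sqrt{2\beta},1\}=1$), and that explicit loops realizing length $2\pi$ exist by~\cite[Section 6]{absilmataigne2024ultimate}. Thus the entire task is to prove the matching lower bound $\ell_\beta \geq 2\pi$ for every nontrivial geodesic loop.

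Next, I would reduce the ambient dimensions. By \cref{lem:nxp_to_2pxp}, a lower bound on the length of shortest loops on $\mathrm{St}_\beta(2p,p)$ transfers to $\mathrm{St}_\beta(n,p)$. So fix a geodesic loop $\gamma_\beta(t)=\mathrm{Exp}_{\beta,I_{2p\times p}}\!\bigl(t\bigl[\begin{smallmatrix}A\\B\end{smallmatrix}\bigr]\bigr)$ on $\mathrm{St}_\beta(2p,p)$. The analysis in \cref{sec:structured_loop} shows that the loop condition forces the block-diagonal matrix-exponential structure, and combining \cref{lem:curve_isometry} with the real Schur decompositions~\eqref{eq:schur_forms} produces a loop $\widetilde{\gamma}_\beta$ of the same $\beta$-length satisfying~\eqref{eq:schur_geodesic}. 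This normalization is free because isometries preserve loop lengths.

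Now I would split into two sub-cases according to $\Phi_1$. If $\Phi_1 \neq 0$, I pick any nonzero diagonal entry $\varphi$ of $\Phi_1$ and apply \cref{lem:reduced_geodesic}, which extracts submatrices $\widehat{A},\widehat{B}$ defining a geodesic loop $\widehat{\gamma}_\beta$ whose length satisfies $L_\beta(\widehat{\gamma}_\beta)\leq L_\beta(\widetilde{\gamma}_\beta)$ and whose structure matches~\eqref{eq:reduced_loop}. If instead $\Phi_1 = 0$, then \cref{rem:disjunctPhi2_is_zero} forces $\Phi_2 = 0$, so we may simply take $\widehat{A}=A$, $\widehat{B}=B$, $\varphi=0$; this falls under the $\varphi=0$ branch of \cref{lem:length_bound_phi}. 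In either branch, proving $L_\beta(\widehat{\gamma}_\beta) \geq 2\pi$ suffices.

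Finally, I dispatch the reduced problem by a short case analysis on $\widehat{A},\widehat{B}$. If one of them is zero (including the case $s_2=0$, where $\widehat{B}$ is the empty block and the argument of \cref{lem:AorBzero} still applies by inspecting the top block of the matrix exponential), \cref{lem:AorBzero} yields $L_\beta(\widehat{\gamma}_\beta) \geq \min\{\sqrt{2\beta},1\}\,2\pi = 2\pi$. Otherwise both are nonzero, and \cref{lem:length_bound_phi} gives the strict bound $L_\beta(\widehat{\gamma}_\beta) > 2\pi$. Chaining the inequalities $L_\beta(\gamma_\beta) = L_\beta(\widetilde{\gamma}_\beta) \geq L_\beta(\widehat{\gamma}_\beta) \geq 2\pi$ and combining with the upper bound yields $\ell_\beta = 2\pi$. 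The main conceptual obstacle has already been absorbed into \cref{lem:length_bound_phi} (the interlacing/singular-value gap argument); at the level of the theorem itself, the only care required is ensuring that the empty-$\widehat{B}$ edge case is subsumed by the $A$-only argument of \cref{lem:AorBzero}, and that the $\Phi_1=0$ branch is routed to the $\varphi=0$ case of \cref{lem:length_bound_phi}.
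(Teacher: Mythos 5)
Your proposal is correct and takes essentially the same route as the paper: the paper's own proof of \cref{thm:closedgeod_beta>2} is exactly this assembly of \cref{lem:nxp_to_2pxp}, the Schur-form normalization of \cref{sec:structured_loop}, \cref{lem:reduced_geodesic} with \cref{rem:disjunctPhi2_is_zero}, and the case analysis via \cref{lem:AorBzero} (including the empty-$\widehat{B}$ variant) and \cref{lem:length_bound_phi}, combined with the explicit loops of length $2\pi$ from~\cite[Section 6]{absilmataigne2024ultimate}. No gaps to report.
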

\begin{proof}
    Above, we prove that the length of a geodesic loop on the Stiefel manifold $\mathrm{St}_\beta(n,p)$ is at least $2\pi$. From~\cite[Section 6]{absilmataigne2024ultimate}, we obtain that there is a geodesic loop on $\mathrm{St}_\beta(n,p)$ with length $2\pi$.
\end{proof}

An example of a geodesic loop of length $2\pi$ is  $\gamma_\beta(t)\coloneq\mathrm{Exp}_{\beta,I_{3\times 2}}\left(t\left[\begin{smallmatrix}A\\B\end{smallmatrix}\right]\right)$ with $A = 0\in\mathrm{Skew}(2)$ and $B = \begin{bmatrix}
    2\pi & 0
\end{bmatrix}$. It holds
\begin{align*}
    \gamma_\beta(1) &= \exp_{\mathrm{m}}\begin{bmatrix}
        2\beta A & -B^\top\\
        B & 0
    \end{bmatrix}I_{3\times 2}\exp_{\mathrm{m}}((1-2\beta)A)\\
    & = I_3I_{3\times 2}I_2 = I_{3\times 2} = \gamma_\beta(0)
\end{align*}
and $ L_\beta(\gamma_\beta) = \sqrt{\beta\|A\|_\mathrm{F}^2 + \|B\|_\mathrm{F}^2} = \|B\|_\mathrm{F}^2 = 2\pi$. This geodesic loop can be embedded in any Stiefel manifold $\mathrm{St}_\beta(n,p)$ with $2\leq p\leq n-1$ by padding the matrices $A$ and $B$ with zeros.

\begin{corollary}
    Let $\gamma_\beta(t)\coloneq \mathrm{Exp}_{\beta,I_{n\times p}}\left(t\left[\begin{smallmatrix}A\\B\end{smallmatrix}\right]\right)$ be a geodesic loop on the Stiefel manifold $\mathrm{St}_\beta(n,p)$ with length $2\pi$, i.e., $L_\beta(\gamma_\beta) = 2\pi$. Then, it holds that $A = 0$.
\end{corollary}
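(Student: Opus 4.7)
The plan is to argue by contradiction. Suppose $A\neq 0$; I will show that this forces $L_\beta(\gamma_\beta)>2\pi$, contradicting the hypothesis $L_\beta(\gamma_\beta)=2\pi$. The strict inequality in \cref{lem:length_bound_phi} is the decisive technical tool; the rest of the argument is simply a matter of making sure the hypotheses of that lemma (or of \cref{lem:AorBzero}) are met. Since the corollary sits in \cref{sec:closedgeod_beta>2}, the ambient assumption $\beta>2$ is in force.

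First I would use \cref{lem:nxp_to_2pxp} to replace $\gamma_\beta$ with a geodesic loop on $\mathrm{St}_\beta(2p,p)$ of the same $\beta$-length and with the same $A$-component in its initial velocity, so we may assume $n=2p$. Next, \cref{lem:curve_isometry} applied with the orthogonal factors $V_1,V_2$ coming from the real Schur decompositions in \cref{eq:schur_forms} lets us replace $\gamma_\beta$ by an isometric loop whose data $A,B$ satisfy the structured form \cref{eq:schur_geodesic}, with diagonal nonnegative $\Phi_1,\Phi_2\in\R^{k\times k}$ and $k=\lfloor p/2\rfloor$. Since this only conjugates $A$ orthogonally by $V_1$, the matrix $A$ remains nonzero.

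I would then split on whether $\Phi_1$ vanishes. If $\Phi_1\neq 0$, pick a nonzero diagonal entry $\varphi$ of $\Phi_1$ of multiplicity $s_1\geq 1$ in $\Phi_1$ and $s_2\geq 0$ in $\Phi_2$, and apply \cref{lem:reduced_geodesic} to extract submatrices $\widehat{A}$ of $A$ and $\widehat{B}$ of $B$ obeying \cref{eq:reduced_loop}. The identity $\exp_\mathrm{m}((1-2\beta)\widehat{A})=G_{2s_1}(-\varphi I_{s_1})\neq I_{2s_1}$ forces $\widehat{A}\neq 0$. When $\widehat{B}\neq 0$, \cref{lem:length_bound_phi} gives the strict bound $L_\beta(\widehat{\gamma}_\beta)>2\pi$; when $\widehat{B}$ is empty (the case $s_2=0$) or vanishes, the argument given in the paragraph preceding \cref{lem:length_bound_phi} (an adaptation of \cref{lem:AorBzero}) yields $L_\beta(\widehat{\gamma}_\beta)\geq\sqrt{2\beta}\,2\pi>2\pi$ because $\beta>2$. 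In either subcase, since $\widehat{A},\widehat{B}$ are submatrices of $A,B$, we obtain $L_\beta(\gamma_\beta)\geq L_\beta(\widehat{\gamma}_\beta)>2\pi$, a contradiction. If instead $\Phi_1=0$, then \cref{rem:disjunctPhi2_is_zero} forces $\Phi_2=0$ as well, so $A,B$ themselves fit the hypotheses of \cref{lem:length_bound_phi} with $\varphi=0$ and $p_1=p_2=p$; since $A\neq 0$, that lemma (or \cref{lem:AorBzero} when $B=0$) directly gives $L_\beta(\gamma_\beta)>2\pi$, again a contradiction.

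The main obstacle is the bookkeeping through the reductions: checking that $A$ survives each step of \cref{lem:nxp_to_2pxp}, \cref{lem:curve_isometry}, and \cref{lem:reduced_geodesic} as a nonzero object, and verifying that the structural hypothesis of \cref{lem:length_bound_phi} is met in \emph{all} subcases—especially the degenerate one $\Phi_1=0$, where no extraction via \cref{lem:reduced_geodesic} is available and one must apply \cref{lem:length_bound_phi} to the full data with $\varphi=0$.
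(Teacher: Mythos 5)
Your proof is correct and takes essentially the same route as the paper: the corollary is stated there without an explicit proof, as an immediate consequence of the Section~5 development (reduction to $\mathrm{St}_\beta(2p,p)$, the Schur-structured form~\eqref{eq:schur_geodesic}, the case split on $\Phi_1$ via \cref{lem:reduced_geodesic} and \cref{rem:disjunctPhi2_is_zero}, and the strict bound of \cref{lem:length_bound_phi} together with \cref{lem:AorBzero} for the degenerate cases), which is exactly the chain of reasoning you reconstruct by contradiction. Your bookkeeping that the $A$-block survives each reduction as a nonzero matrix is precisely the point that needs checking, and you handle it correctly.
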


\section{Bounds for the sectional curvature}\label{sec:bounds_sectcurvature}
As outlined in equations~\eqref{eq:inj=min(l/2,conj)} and~\eqref{eq:conjbound} from \Cref{sec:geod_and_injrad_intro}, the injectivity radius is given by $\mathrm{inj}(\mathrm{St}_\beta(n,p)) = \min\left\{\frac{\ell_\beta}{2},\mathrm{conj}_{I_{n\times p}}(\mathrm{St}_\beta(n,p))\right\}$ and the conjugate radius is bounded by $\mathrm{conj}_{I_{n\times p}}(\mathrm{St}_\beta(n,p)) \geq \frac{\pi}{\sqrt{K_\beta}}$. In \Cref{sec:betalength<=2,sec:closedgeod_beta>2}, we have calculated the length $\ell_\beta$ of a shortest geodesic loop for all values of $\beta>0$. Next, we provide a positive constant $K_\beta$ that globally bounds the sectional curvature of $\mathrm{St}_\beta(n,p)$ from above in order to obtain a bound for the conjugate radius. This allows us to make a statement about the injectivity radius in \Cref{sec:injectivity}. 
We exclude the cases $n=2, p=2$ and $n=2, p=1$, since here the Stiefel manifold is one-dimensional, so that the concept of sectional curvature does not apply. From~\cite[Theorem 12, Theorem 13, Theorem 14]{zimmermannstoye_curvature:2024}\footnote{In~\cite[Theorem 12]{zimmermannstoye_curvature:2024}, it is stated that the sectional curvature is bounded below by zero for all $\beta\in[\frac16(4-\sqrt{10}),\frac23]$. However, the proof for this statement is missing, and in fact, negative curvature is possible if $\beta \in (\frac{1}{2}, \frac{2}{3}]$; see \cite[Table~2 and the definition of $\mathfrak{l}(\alpha)$]{nguyen2022curvature}.},
\begin{eqnarray*}
    K_{\beta} = \begin{cases}
        \frac{1}{4\beta} & 0<\beta\leq\frac16(4-\sqrt{10})\approx 0.14...,\\
        \frac{4-3\beta}{2} & \frac16(4-\sqrt{10})<\beta\leq\frac23,\\
        1 & 0.71...\approx\frac{1}{\sqrt{2}}\leq\beta\leq 1.
    \end{cases}
\end{eqnarray*}
There is a gap for $\beta\in(\frac23,\frac{1}{\sqrt{2}})$ which is not covered in~\cite{zimmermannstoye_curvature:2024}. In this section, we close this gap. 

A formula for the sectional curvature is derived in~\cite[eq. (34)]{nguyen2022curvature}.
For a $\beta$-orthonormal basis $\{\Delta_1,\Delta_2\} \in T_{I_{n\times p}}\mathrm{St}_\beta(n,p)$ of a two-dimensional tangent space section, it reads
\begin{align}\label{eq:sect_curv_formula}
\begin{split}
	\mathcal{K}_\beta(\Delta_1,\Delta_2) 
	=&  \frac12\|B_1B_2^\top - B_2B_1^\top\|_\mathrm{F}^2 + 
 \frac{(1-2\beta)^3}{2}\|B_1^\top B_2 - B_2^\top B_1\|_\mathrm{F}^2
 \\ 
	&  +\beta^2\|B_1A_2 - B_2A_1\|_\mathrm{F}^2
    \\ &  + \frac{\beta}{4} \| [A_1,A_2] - (3-4\beta)(B_1^\top B_2 - B_2^\top B_1)\|_\mathrm{F}^2.
\end{split}
\end{align}
The matrix blocks that form $\Delta_1,\Delta_2$ are related by 
$\|\Delta_i\|_\beta^2 = \beta \|A_i\|_\mathrm{F}^2 + \|B_i\|_\mathrm{F}^2 =1$ for $i=1,2$ and
$\langle \Delta_1,\Delta_2\rangle_\beta =\beta \mathrm{tr}(A_1^\top A_2) + \mathrm{tr}(B_1^\top B_2)=0$.

As in~\cite{zimmermannstoye_curvature:2024}, we have the following term-by-term estimates, where $\alpha_i \coloneq \|A_i\|_\mathrm{F}$, $\eta_i\coloneq\|B_i\|_\mathrm{F}$ and $\omega \coloneq \|B_1^\top B_2 - B_2^\top B_1\|_\mathrm{F}$:
\begin{itemize}
    \item $\frac12\|B_1B_2^\top - B_2B_1^\top\|_\mathrm{F}^2\leq \eta_1^2\eta_2^2$,

    \item $\beta^2\|B_1A_2 - B_2A_1\|_\mathrm{F}^2\leq \frac{\beta^2}2\eta_1^2\alpha_2^2 + \frac{\beta^2}2 \eta_2^2 \alpha_1^2 + \beta^2\alpha_1\alpha_2\eta_1\eta_2$,

    \item $\| [A_1,A_2]\|_\mathrm{F}^2\leq \alpha_1^2\alpha_2^2$,

    \item $\| [A_1,A_2] - (3-4\beta)(B_1^\top B_2 - B_2^\top B_1)\|_\mathrm{F}^2\leq\alpha_1^2\alpha_2^2 + 2(3-4\beta)\alpha_1\alpha_2\omega + (3-4\beta)^2\omega^2$.
\end{itemize}
We start by using the term-by-term estimates and the relation $\eta_i^2 = 1-\beta\alpha_i^2$ to obtain 
\begin{align}
    \mathcal{K}_\beta(\Delta_1,\Delta_2) \leq&
    \eta_1^2\eta_2^2 + \frac{(1-2\beta)^3}{2}\omega^2\nonumber\\
    &  + \left(-\beta^3\alpha_1^2\alpha_2^2 + \frac{\beta^2}{2}\left(\alpha_1^2 + \alpha_2^2\right) + \beta^2\alpha_1\alpha_2\eta_1\eta_2\right)\nonumber\\
    &  + \frac{\beta}{4}\alpha_1^2\alpha_2^2 + \frac{\beta}{4}(3-4\beta)^2\omega^2 + \frac{\beta}{4}2(3-4\beta)\alpha_1\alpha_2\omega,\nonumber\\
    \begin{split}\label{eq:upperbound}
    = & \frac{2-3\beta}{4}\omega^2 + \frac{2\beta(3-4\beta)}{4}\alpha_1\alpha_2\omega + \eta_1^2\eta_2^2\\ 
    & +\left(\frac{\beta}{4}-\beta^3\right)\alpha_1^2\alpha_2^2 + \frac{\beta^2}{2}\left(\alpha_1^2+\alpha_2^2\right) + \beta^2\alpha_1\alpha_2\eta_1\eta_2.
\end{split}
\end{align}
This is a parabola in $\omega$ with a downward opening for any value of $\beta\in(\frac23,\frac{1}{\sqrt{2}})$ and, by the matrix inequality of Wu and Chen~\cite{WuChen1988}, we have
\begin{align*}
    \omega = \|B_1^\top B_2 - B_2^\top B_1\|_\mathrm{F} \leq \sqrt{2}\|B_1\|_\mathrm{F}\|B_2\|_\mathrm{F} \leq \sqrt{2}.
\end{align*}

We show that the maximal value of the parabola is bounded above by one and, hence, the sectional curvature is bounded by one, thereby closing the aforementioned gap. 
The maximum value of the parabola is attained at $\omega_{\max}\coloneq\frac{\beta(3-4\beta)}{(3\beta-2)}\alpha_1\alpha_2$, if $\omega_{\max} < \sqrt{2}$, or at $\omega =\sqrt{2}$, if $\omega_{\max}\geq\sqrt{2}$. 
Assume that $\omega = \sqrt{2}$. Then, we necessarily have $\|B_1\|_\mathrm{F} = \|B_2\|_\mathrm{F} = 1$ and therefore $\|A_1\|_\mathrm{F} = \|A_2\|_\mathrm{F} = 0$. In this case, the expression for the upper bound~\eqref{eq:upperbound} becomes
\begin{align*}
   ~\eqref{eq:upperbound}|_{\omega = \sqrt{2}} = \frac{2-3\beta}{4}\sqrt{2}^2 + 1 = \frac{4-3\beta}{2}(<1).
\end{align*}
Now, assume that $\omega_{\max} < \sqrt{2}$. Plugging in $\omega_{\max}$ into the expression for the upper bound~\eqref{eq:upperbound} gives
\begin{align}\label{eq:upperbound_omegamax}
   ~\eqref{eq:upperbound}|_{\omega = \omega_{\max}} = \underbrace{\left(\frac{\beta^2(3-4\beta)^2}{4(3\beta-2)} - \beta^3 + \frac{\beta}{4}\right)}_{\eqcolon T_\beta}\alpha_1^2\alpha_2^2 + \frac{\beta^2}{2}(\alpha_1^2 + \alpha_2^2) + \beta^2\alpha_1\alpha_2\eta_1\eta_2 + \eta_1^2\eta_2^2.
\end{align}
Next, we show that~\eqref{eq:upperbound_omegamax} is bounded above by one. Recall that $t_1t_2\leq \frac12(t_1^2+t_2^2)$ for any $t_1,t_2\in\mathbb{R}$. By this fact and the relation $\eta_i^2 = 1 - \beta\alpha_i^2$, we have:
\begin{itemize}
    \item $\eta_1\eta_2\leq\frac12(\eta_1^2+\eta_2^2) = 1 - \frac{\beta}{2}(\alpha_1^2 + \alpha_2^2) \leq 1-\beta\alpha_1\alpha_2$,
    \item $\left(\frac{\beta^2}{2} - \beta\right)(\alpha_1^2+\alpha_2^2)\leq (\beta^2-2\beta)\alpha_1\alpha_2$, with $\beta\in(\frac23,\frac{1}{\sqrt{2}})$.
\end{itemize}
So,
\begin{align*}
   ~\eqref{eq:upperbound_omegamax} &\leq T_\beta\alpha_1^2\alpha_2^2 + \frac{\beta^2}{2}(\alpha_1^2+\alpha_2^2) + \beta^2\alpha_1\alpha_2(1-\beta\alpha_1\alpha_2) + (1-\beta\alpha_1^2)(1-\beta\alpha_2^2)\\
    &=(T_\beta + \beta^2 - \beta^3)\alpha_1^2\alpha_2^2 + \left(\frac{\beta^2}{2}-\beta\right)(\alpha_1^2+\alpha_2^2) + \beta^2\alpha_1\alpha_2 + 1\\
    &\leq (T_\beta + \beta^2 - \beta^3)\alpha_1^2\alpha_2^2 + 2(\beta^2 - \beta)\alpha_1\alpha_2 + 1.
\end{align*}
This is a parabola in $\alpha_1\alpha_2$ with an upward opening, since $(T_\beta +\beta^2-\beta^3)\geq 0$ holds for $\beta\in(\frac23,\frac{1}{\sqrt{2}})$. Therefore, the parabola becomes maximal for $\alpha_1\alpha_2$ as large or as small as possible. In our case, we have $\alpha_1\alpha_2\geq 0$ and, since $\omega_{\max} < \sqrt{2}$, we have $\alpha_1\alpha_2 < \frac{\sqrt{2}}{\beta}\frac{3\beta-2}{3-4\beta}$. The value of the parabola at $\alpha_1\alpha_2 = 0$ is one. For $\alpha_1\alpha_2 = \frac{\sqrt{2}}{\beta}\frac{3\beta-2}{3-4\beta}$, we show that the value of the parabola is smaller than one. With $\beta\in(\frac{2}{3},\frac{1}{\sqrt{2}})$, it can be shown that this is equivalent to 
\begin{align*}
   0 < (4+8\sqrt{2})\beta^3 - 2(1+7\sqrt{2})\beta^2+(6\sqrt{2}-2)\beta + 1.
\end{align*}
It is easy to verify that the cubic polynomial on the right-hand side is positive for any $\beta\in(\frac{2}{3},\frac{1}{\sqrt{2}})$. 
Hence, the gap in~\cite[Theorem 12, Theorem 13, Theorem 14]{zimmermannstoye_curvature:2024} is filled and combined, we have

\begin{theorem}\label{thm:sect_curvature}
    The sectional curvature of the Stiefel manifold $\mathrm{St}_\beta(n,p)$ is bounded above by $K_\beta>0$ with
    \begin{align*}
        K_\beta = \begin{cases}
            \frac{1}{4\beta} & 0<\beta\leq\frac16(4-\sqrt{10}),\\
            \frac{4-3\beta}{2} & \frac16(4-\sqrt{10})<\beta\leq\frac23,\\
            1 & \frac23\leq\beta\leq 1.
        \end{cases}
    \end{align*}
\end{theorem}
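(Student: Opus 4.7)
The plan is to assemble the theorem from the existing bounds for $\beta \in (0, \frac{2}{3}]$ and $\beta \in [\frac{1}{\sqrt{2}}, 1]$ obtained in \cite{zimmermannstoye_curvature:2024}, together with a new treatment of the gap $\beta \in (\frac{2}{3}, \frac{1}{\sqrt{2}})$, where the claim becomes $K_\beta = 1$. The piecewise bounds match continuously at $\beta = \frac{2}{3}$, since $\frac{4-3\beta}{2}\big|_{\beta=2/3} = 1$, so only the gap interval requires new work.

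For the gap, I would start from Nguyen's explicit formula~\eqref{eq:sect_curv_formula} for $\mathcal{K}_\beta(\Delta_1, \Delta_2)$ and apply the four term-by-term estimates in the bullet list: Wu--Chen for the first term, polarization for the mixed $B_i A_j$ term, and expansion plus submultiplicativity for the last. Collecting the result and eliminating $\eta_i^2$ via the $\beta$-orthonormality relation $\eta_i^2 = 1 - \beta \alpha_i^2$ yields the upper bound~\eqref{eq:upperbound}, which is a parabola in $\omega \coloneq \|B_1^\top B_2 - B_2^\top B_1\|_\mathrm{F}$. The key new observation is that this parabola opens \emph{downward} precisely because $2 - 3\beta < 0$ on the gap; the Wu--Chen inequality then confines $\omega \in [0, \sqrt{2}]$ so that the maximum is attained either at the vertex $\omega_{\max} = \frac{\beta(3 - 4\beta)}{3\beta - 2}\alpha_1\alpha_2$ or at the boundary $\omega = \sqrt{2}$.

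The boundary case $\omega = \sqrt{2}$ forces $\eta_1 = \eta_2 = 1$ and hence $\alpha_1 = \alpha_2 = 0$, at which~\eqref{eq:upperbound} collapses to $\frac{4 - 3\beta}{2} < 1$. The interior case $\omega = \omega_{\max}$ is the substantive calculation: substitution produces equation~\eqref{eq:upperbound_omegamax}. Here I would eliminate $\eta_1 \eta_2$ by AM--GM together with $\eta_i^2 = 1 - \beta \alpha_i^2$, and exploit that $\frac{\beta^2}{2} - \beta < 0$ on the gap, to rewrite the bound as a single-variable quadratic in $\alpha_1\alpha_2$ with upward opening. Its maximum on the admissible interval is therefore attained at an endpoint: at $\alpha_1\alpha_2 = 0$ the value is exactly $1$, while at the other endpoint $\alpha_1\alpha_2 = \frac{\sqrt{2}(3\beta - 2)}{\beta(3 - 4\beta)}$ (the value forcing $\omega_{\max} = \sqrt{2}$) one must verify the bound is strictly below $1$.

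The main obstacle is the final scalar check at the second endpoint: after clearing denominators it reduces to showing positivity of a cubic polynomial of the form $(4 + 8\sqrt{2})\beta^3 - 2(1 + 7\sqrt{2})\beta^2 + (6\sqrt{2} - 2)\beta + 1$ on $(\frac{2}{3}, \frac{1}{\sqrt{2}})$. I would verify this by evaluating at the two endpoints and ruling out an interior root via sign analysis of the derivative (or Sturm's theorem), both routine but requiring care because the coefficients are algebraic. Once done, combining with the results of~\cite{zimmermannstoye_curvature:2024} on the two flanking regimes yields the complete piecewise description of $K_\beta$.
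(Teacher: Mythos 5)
Your proposal follows essentially the same route as the paper's own proof: the same assembly of the flanking regimes from the cited curvature bounds, the same term-by-term estimates leading to the downward-opening parabola in $\omega$, the same case split between the Wu--Chen boundary $\omega=\sqrt{2}$ (forcing $\alpha_1=\alpha_2=0$ and the value $\frac{4-3\beta}{2}<1$) and the vertex $\omega_{\max}$, the same reduction to an upward-opening quadratic in $\alpha_1\alpha_2$, and the identical final cubic $(4+8\sqrt{2})\beta^3-2(1+7\sqrt{2})\beta^2+(6\sqrt{2}-2)\beta+1>0$ on $(\frac23,\frac{1}{\sqrt{2}})$. The only cosmetic difference is that you spell out how to certify positivity of that cubic (endpoint evaluation plus derivative sign analysis), which the paper leaves as a routine verification.
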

For a graphical illustration, see \cref{fig:K_beta}.
\begin{figure}[ht]
    \centering
    \includegraphics[width=0.5\linewidth]{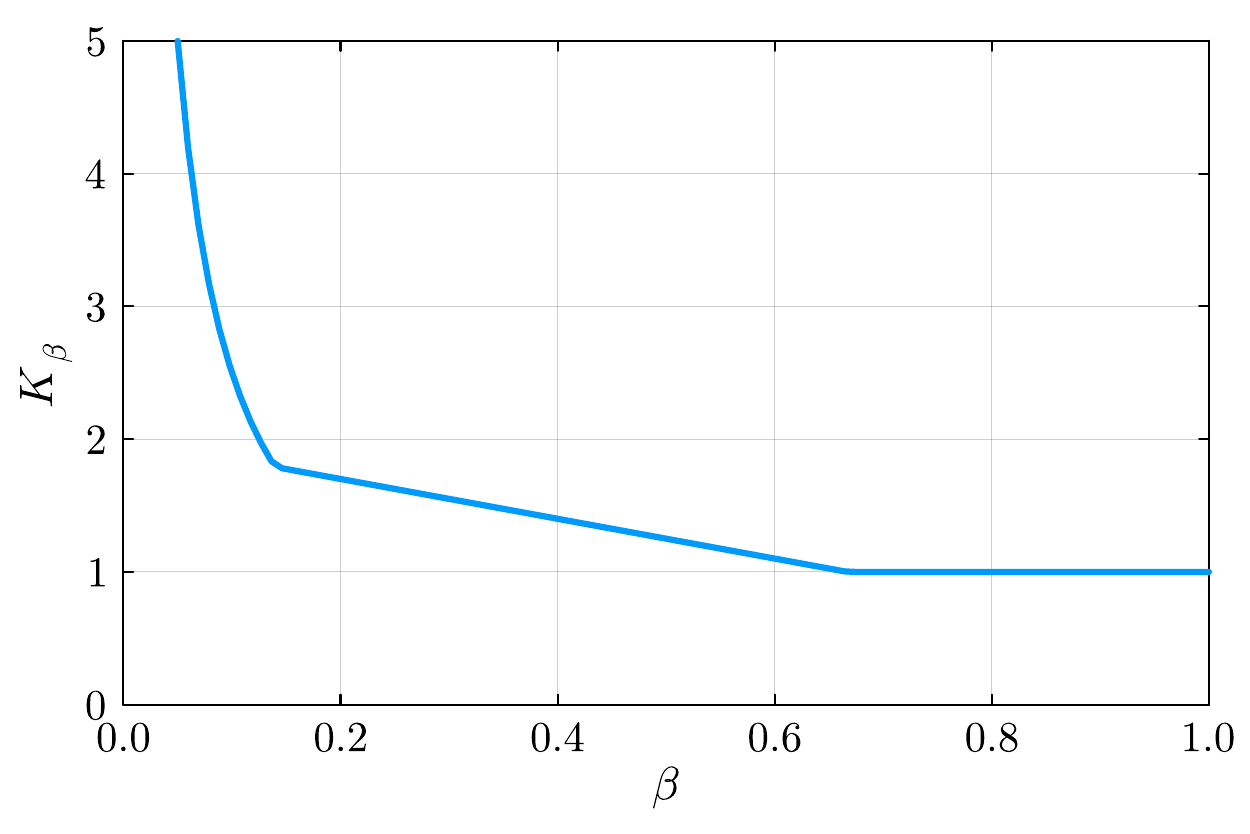}
    \caption{The upper bound $K_\beta$ on the sectional curvature of $\mathrm{St}_\beta(n,p)$ for $\beta\in(0,1]$.}
    \label{fig:K_beta}
\end{figure}
\begin{remark}
    For $2\leq p\leq n-2$, the bound $K_\beta$ on the sectional curvature of $\mathrm{St}_\beta(n,p)$ for $\beta\in(\frac23,\frac{1}{\sqrt{2}})$ is sharp and is only attained for tangent sections with zero skew-symmetric $A$-blocks and $B_1$, $B_2$ such that $\Vert B_1^\top B_2 - B_2^\top B_1\Vert_\mathrm{F} = 0$ and $\Vert B_1B_2^\top - B_2B_1^\top\Vert_\mathrm{F} = \sqrt{2}$. e.g., $A_1 = A_2 = 0$ and
    \begin{align*}
        B_1 = \begin{bmatrix}
            1 & 0\\
            0 & 0
        \end{bmatrix},\,B_2 = \begin{bmatrix}
            0 & 0\\
            1 & 0
        \end{bmatrix}.
    \end{align*}
    Of course, the curvature is the same for all matrices of higher dimensions that feature the above examples as subblocks and are otherwise filled up with zeros.
    
    Together with~\cite[Theorem 12, Theorem 13, Theorem 14]{zimmermannstoye_curvature:2024}, we obtain that the curvature bounds given in \cref{thm:sect_curvature} are sharp for $4\leq p\leq n-2$. The same bounds were previously conjectured in ~\cite[Table 2]{nguyen2022curvature}.
\end{remark}

\section{Implications for the injectivity radius}\label{sec:injectivity}

Recall the results due to Klingenberg~\cite{klingenberg1982}, displayed in equations~\eqref{eq:inj=min(l/2,conj)} and~\eqref{eq:conjbound} in \Cref{sec:geod_and_injrad_intro}: The injectivity radius is given by
$$\mathrm{inj}(\mathrm{St}_\beta(n,p)) = \min\left\{\frac{\ell_\beta}{2}, \mathrm{conj}_{I_{n\times p}}(\mathrm{St}_\beta(n,p))\right\}$$
and the conjugate radius is bounded from below by 
$$\mathrm{conj}_{I_{n\times p}}(\mathrm{St}_\beta(n,p))\geq \frac{\pi}{\sqrt{K_\beta}}.$$
We can now exploit the knowledge on the length $\ell_\beta$ of the shortest geodesic loops and the bound $K_\beta$ on the sectional curvature gathered in the previous sections in order to determine the exact value for, or an upper bound on, the injectivity radius of the Stiefel manifold $\mathrm{St}_\beta(n,p)$.

\subsection{Case $p=1$ or $p\geq n-1$}

As previously noted, when $p=1$, the Stiefel manifold reduces to the unit sphere as a Riemannian submanifold of the Euclidean space $\mathbb{R}^n$, independently of the parameter $\beta$. The injectivity radius is $\mathrm{inj}(\mathrm{St}_\beta(n,1)) = \pi$; see, e.g.,~\cite[\S 1.6]{cheeger1975comparison}. In this case, half the length of the shortest geodesic loops coincides with the conjugate radius. Since the geodesics on the sphere are great circles, the shortest geodesic loops have length $2\pi$, and thus $\frac{\ell_\beta}{2} = \pi$. Moreover, the unit sphere has constant sectional curvature equal to one, implying that the conjugate radius is bounded by $\pi$. By~\cite[Chapter 5 Example 3.3]{docarmo} this bound is attained, so the conjugate radius is exactly $\pi$.

When $p=n\geq 2$, the Stiefel manifold reduces to the orthogonal group $\mathrm{O}(n)$ endowed with the Frobenius metric scaled by $\beta$. From~\cite[Section 6]{absilmataigne2024ultimate}, we obtain $\ell_\beta = \sqrt{2\beta}2\pi$. This result can alternatively be derived using an argument analogous to that in the proof of \cref{lem:AorBzero}. 
In the next theorem, we determine the exact value of the conjugate radius of $\mathrm{St}_\beta(n,n)$, for $n > 2$.
When $n=2$, the manifold is one-dimensional, and therefore no conjugate points exist and the injectivity radius is immediately given by $\mathrm{inj}(\mathrm{St}_\beta(2,2)) = \sqrt{2\beta}\pi$.

\begin{theorem}
Let $n > 2$. The conjugate radius of the orthogonal group $\mathrm{O}(n)$ endowed with the Frobenius metric scaled by $\beta$ is given by 
\[\mathrm{conj}_{I_n}(\mathrm{St}_\beta(n,n)) =\begin{cases} \sqrt{8\beta}\pi & n=3,\\
\sqrt{4\beta}\pi & n>3.
\end{cases}\]
Therefore, the injectivity radius is given by $\mathrm{inj}(\mathrm{St}_\beta(n,n))= \sqrt{2\beta}\pi$.
\end{theorem}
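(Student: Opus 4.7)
The plan is to exploit that $\mathrm{St}_\beta(n,n)$ is the compact Lie group $\mathrm{O}(n)$ endowed with the bi-invariant metric $\beta\langle\cdot,\cdot\rangle_\mathrm{F}$, so that every geodesic through $I_n$ is a one-parameter subgroup $t\mapsto\exp_\mathrm{m}(tA)$ with $A\in\mathrm{Skew}(n)$. I would invoke the classical identity
\[
\mathrm{d}(\exp_\mathrm{m})_{tA} \;=\; \exp_\mathrm{m}(tA)\cdot\frac{I-e^{-t\,\mathrm{ad}_A}}{\mathrm{ad}_A},
\]
which becomes singular exactly when $t\,\mathrm{ad}_A$ possesses a nonzero eigenvalue of the form $2\pi im$, $m\in\mathbb{Z}\setminus\{0\}$. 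Since all eigenvalues of $\mathrm{ad}_A$ acting on $\mathrm{Skew}(n)$ are purely imaginary, this yields that the first conjugate time along $t\mapsto\exp_\mathrm{m}(tA)$ equals $t^{\ast}(A)=2\pi/\mu_{\max}(A)$, where $\mu_{\max}(A)$ denotes the largest modulus among the imaginary parts of the nonzero eigenvalues of $\mathrm{ad}_A$. Consequently, $\mathrm{conj}_{I_n}(\mathrm{St}_\beta(n,n))=2\pi/\sup_{\|A\|_\beta=1}\mu_{\max}(A)$.

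Next, I would describe $\mu_{\max}(A)$ in terms of the eigenvalues $\pm i\theta_1,\dots,\pm i\theta_k$ of $A$ (plus an additional zero when $n$ is odd), via the root space decomposition of $\mathrm{so}(n)$, of type $D_k$ when $n=2k$ and of type $B_k$ when $n=2k{+}1$. The nonzero imaginary parts of the eigenvalues of $\mathrm{ad}_A$ are then the values $|\theta_j\pm\theta_l|$ for $1\leq j<l\leq k$, together with $|\theta_j|$ for each $j$ when $n$ is odd. The crucial dichotomy is that for $n=3$ (where $k=1$) only $\pm\theta_1$ appears, whereas for $n\geq 4$ the sum $\theta_j+\theta_l$ with $j<l$ becomes available; this will force the two separate cases in the theorem.

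The final step is the constrained optimization under $\|A\|_\beta^2=2\beta\sum_j\theta_j^2=1$, that is, $\sum_j\theta_j^2=1/(2\beta)$. For $n=3$, this directly gives $\sup\theta_1=1/\sqrt{2\beta}$, hence $\mathrm{conj}_{I_3}(\mathrm{St}_\beta(3,3))=\sqrt{8\beta}\,\pi$. For $n\geq 4$, the Cauchy–Schwarz bound $\theta_j+\theta_l\leq\sqrt{2(\theta_j^2+\theta_l^2)}\leq 1/\sqrt{\beta}$ is attained at $\theta_1=\theta_2=1/(2\sqrt{\beta})$ with all other $\theta_j=0$ (an admissible choice since $k\geq 2$), so $\mathrm{conj}_{I_n}(\mathrm{St}_\beta(n,n))=\sqrt{4\beta}\,\pi$. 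The injectivity-radius statement $\mathrm{inj}(\mathrm{St}_\beta(n,n))=\sqrt{2\beta}\,\pi$ then follows from the Klingenberg identity~\eqref{eq:inj=min(l/2,conj)} together with $\ell_\beta/2=\sqrt{2\beta}\,\pi$, since $\sqrt{2\beta}\,\pi$ is strictly smaller than both $\sqrt{4\beta}\,\pi$ and $\sqrt{8\beta}\,\pi$.

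The main obstacle I anticipate is the careful enumeration of the $\mathrm{ad}_A$-spectrum: one must include the "short-root" eigenvalues $\pm i\theta_j$ that appear only when $n$ is odd (coming from the $B_k$ root system), yet verify that these never strictly exceed a maximum of the form $\theta_j+\theta_l$ when $k\geq 2$, so that the sup-computation produces a single uniform formula for $n\geq 4$. Once this spectral description is laid out cleanly, the remaining optimization and the appeal to Klingenberg's identity are essentially routine.
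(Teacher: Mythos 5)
Your proposal is correct, and it reaches the stated values by a genuinely different route from the paper. The paper splits the argument: for the lower bound $\mathrm{conj}_{I_n}\geq\sqrt{4\beta}\,\pi$ it uses the sectional-curvature machinery already developed there (the formula $\mathcal{K}_\beta(\Delta_1,\Delta_2)=\frac{\beta}{4}\|[A_1,A_2]\|_\mathrm{F}^2$ together with Ge's commutator inequality, giving $K_\beta=\frac{1}{4\beta}$ and hence $\mathrm{conj}\geq\pi/\sqrt{K_\beta}$), and for attainment it cites the rank-deficiency criterion of Deng et al.\ for $\mathrm{D}\exp_\mathrm{m}$ and exhibits one explicit matrix with $\theta_1=\theta_2=\pi$ (for $n>3$), treating $n=3$ by direct enumeration exactly as you do. You instead bypass curvature entirely: you derive the conjugacy criterion from the classical formula for the differential of the Lie exponential, enumerate the $\mathrm{ad}_A$-spectrum via the $B_k$/$D_k$ root systems ($|\theta_j\pm\theta_l|$, plus the short roots $|\theta_j|$ for odd $n$), and then solve the constrained optimization $\sum_j\theta_j^2=\frac{1}{2\beta}$ in closed form, which yields both the lower bound and its attainment in one stroke, uniformly in $n$. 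What each approach buys: the paper's argument recycles its curvature results and needs only a single witness matrix, but the curvature bound alone is not sharp for $n=3$ (it gives $\sqrt{4\beta}\,\pi$ rather than $\sqrt{8\beta}\,\pi$), forcing the separate spectral treatment of that case anyway; your argument is self-contained, explains structurally \emph{why} the dichotomy $n=3$ versus $n>3$ occurs (absence of the long roots $e_1+e_2$ when $k=1$), and determines the full tangent conjugate locus rather than just its infimum. Two cosmetic points: your displayed formula for $\mathrm{d}(\exp_\mathrm{m})_{tA}$ is missing a factor $t$ in the denominator (it should be $\frac{I-e^{-t\,\mathrm{ad}_A}}{t\,\mathrm{ad}_A}$), which does not affect the locus of singularity; and you correctly flag, and correctly resolve, the only delicate step, namely that the short-root values $|\theta_j|\leq 1/\sqrt{2\beta}$ can never beat the optimal long-root value $1/\sqrt{\beta}$ when $k\geq 2$. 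The concluding appeal to Klingenberg's identity with $\ell_\beta/2=\sqrt{2\beta}\,\pi<\min\{\sqrt{4\beta},\sqrt{8\beta}\}\,\pi$ matches the paper.
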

\begin{proof}
    We begin by establishing a lower bound on the conjugate radius using equation~\eqref{eq:conjbound}.
    For $p=n>2$, the sectional curvature formula~\eqref{eq:sect_curv_formula} simplifies to $\mathcal{K}_\beta(\Delta_1,\Delta_2) = \frac{\beta}{4}\|[A_1,A_2]\|_F^2$, where $A_1,A_2\in\mathrm{Skew}(n)$ satisfy $\beta\|A_i\|_F^2\leq 1$ for $i = 1,2$. Invoking the inequality $\|[A_1,A_2]\|_F^2\leq\|A_1\|_F^2\|A_2\|_F^2$ from~\cite[Lemma 2.5]{Ge2014}, we obtain $\mathcal{K}_\beta(\Delta_1,\Delta_2) \leq \frac{1}{4\beta} \eqcolon K_\beta$.
    Consequently, the conjugate radius is bounded below by 
    \[\mathrm{conj}_{I_n}(\mathrm{St}_\beta(n,n))\geq\sqrt{4\beta}\pi.\]
    Next, we describe the relationship between conjugate points and critical points of the matrix exponential. The tangent space at $I_n$, i.e., the Lie algebra of $\mathrm{O}(n)$, is given by the set of skew-symmetric matrices $T_{I_n}\mathrm{St}(n,n) = \mathrm{Skew}(n)$, and the geodesic starting in $I_n$ with initial velocity $\widetilde{A}\in\mathrm{Skew}(n)$ is $\gamma(t) = \exp_\mathrm{m}(t\widetilde{A})$. Hence, the Riemannian exponential at $I_n$ takes the form \[\mathrm{Exp}_{\beta,I_n}\colon\mathrm{Skew}(n)\to\mathrm{St}_\beta(n,n),\quad\mathrm{Exp}_{\beta,I_n}(\widetilde{A}) = \exp_\mathrm{m}(\widetilde{A}).\] 
    According to~\cite[Chapter~5 Proposition~3.5]{docarmo}, a point $\gamma(t_0)$ is conjugate to $\gamma(0) = I_n$ along $\gamma$ if and only if the differential $\mathrm{D}\exp_\mathrm{m}(A)$ is rank-deficient, where $A \coloneq t_0\widetilde{A}$. The corresponding geodesic distance is $\|A\|_\beta = \sqrt{\beta\|A\|_\mathrm{F}^2}$.
    In~\cite[Definition~4.1]{deng2025exponentialskewsymmetricmatricesnearby} the tangent conjugate locus of $\mathrm{O}(n)$ is described. As a direct consequence, we obtain a specific condition on the eigenvalues of $A$ for $\mathrm{D}\exp_\mathrm{m}(A)$ to be rank-deficient.
    Let the eigenvalues of $A$ be given by $i\theta_1,-i\theta_1,\dots,i\theta_m,-i\theta_m$, for $n = 2m$; when $n = 2m+1$, there is an additional zero eigenvalue.
    For $n$ even, $\mathrm{D}\exp_\mathrm{m}(A)$ is rank-deficient if and only if there exist $i\neq j$ and a nonzero integer $\ell$ such that $\theta_i\pm\theta_j = 2\pi\ell$. When $n$ is odd, the same condition applies, or alternatively, $\theta_i = 2\pi\ell$ for some $i$ and $0\neq\ell\in\mathbb{Z}$. 
    For $n>3$, consider the skew-symmetric matrix $A$ whose upper-left $4\times 4$ block is $I_2\otimes \begin{bmatrix}0 & -\pi\\\pi&0\end{bmatrix}$ with all other entries zeros. This matrix satisfies the above criterion, since $\theta_1 = \theta_2 = \pi$ and $\theta_1+\theta_2 = 2\pi$. Therefore, $A$ corresponds to a conjugate point at a geodesic distance of $\sqrt{\beta\|A\|_\mathrm{F}^2} = \sqrt{4\beta}\pi$. Hence, the lower bound on the conjugate radius is sharp and $\mathrm{conj}(\mathrm{St}_\beta(n,n)) = \sqrt{4\beta}\pi$, for $n>3$.
    For $n=3$, the eigenvalues of $A$ are $0$ and $\pm i\theta_1$. For $A$ to define a conjugate point, it has to hold $\theta_1 = 2\pi\ell$ with $0\neq\ell\in\mathbb{Z}$. The corresponding geodesic distance of the conjugate point is $\sqrt{\beta\|A\|_\mathrm{F}^2} = \sqrt{2\beta\theta_1^2} = \sqrt{8\beta}\pi|\ell|$. The minimum occurs for $\ell = 1$, for instance with $A = \begin{bmatrix}0&-2\pi&0 \\ 2\pi&0&0 \\ 0&0&0\end{bmatrix}$. Hence, the conjugate radius is given by $\mathrm{conj}(\mathrm{St}_\beta(3,3)) = \sqrt{8\beta}\pi$. 

    Since half the length of the shortest geodesic loops is smaller than the conjugate radius in either case, we obtain by equation~\eqref{eq:inj=min(l/2,conj)} that the injectivity radius is given by $\mathrm{inj}(\mathrm{St}_\beta(n,n)) = \frac{\ell_\beta}{2} = \sqrt{2\beta}\pi$.
\end{proof}

It is worth noting that the choice $\beta = 1$ yields the Frobenius inner product, see, e.g.,~\cite[Example~3.5]{guigui2023}. For $\beta = \frac12$, the shortest geodesic loops have the nice-looking length $2\pi$; a choice made, e.g., in~\cite[(3.12)]{absilmataigne2024ultimate}.

When $p=n-1$, we are able to determine the exact value of the injectivity radius up to $\beta = 1$.

\begin{theorem}\label{thm:injrad_p=n-1}
    For the injectivity radius of the Stiefel manifold $\mathrm{St}_\beta(n,n-1)$, it holds
    \begin{align*}
        \mathrm{inj}(\mathrm{St}_\beta(n,n-1))
        \begin{cases}
            = \sqrt{2\beta}\pi & 0<\beta<\frac12,\\
            = \pi & \frac12\leq\beta\leq 1,\\
            \leq \pi & 1 < \beta.
        \end{cases}
    \end{align*}
\end{theorem}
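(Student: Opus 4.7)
The plan is to apply Klingenberg's identity~\eqref{eq:inj=min(l/2,conj)} together with the loop lengths $\ell_\beta=\min\{\sqrt{2\beta},1\}2\pi$ from \cref{thm:shortest_beta_geos<=2} and \cref{thm:closedgeod_beta>2}. These give $\ell_\beta/2=\sqrt{2\beta}\pi$ for $\beta\in(0,1/2)$ and $\ell_\beta/2=\pi$ for $\beta\geq 1/2$, so the upper bound $\mathrm{inj}(\mathrm{St}_\beta(n,n-1))\leq\pi$ for $\beta>1$ is immediate. For the two equality claims it suffices, in view of~\eqref{eq:conjbound}, to exhibit a sectional-curvature bound $K_\beta^{(n-1)}\leq 1/(2\beta)$ for $\beta\in(0,1/2)$ and $K_\beta^{(n-1)}\leq 1$ for $\beta\in[1/2,1]$ on $\mathrm{St}_\beta(n,n-1)$. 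The generic bound of \cref{thm:sect_curvature} already delivers both on $(0,\tfrac{1}{3}]\cup[\tfrac{2}{3},1]$, so only the gap $\beta\in(\tfrac{1}{3},\tfrac{2}{3})$ requires genuinely new work.

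The simplification that makes the sharper bound tractable is that for $p=n-1$ each $B_i\in\mathbb{R}^{1\times(n-1)}$ is a row vector, so $B_1 B_2^\top=B_2 B_1^\top$ as scalars. Consequently the term $\tfrac{1}{2}\|B_1 B_2^\top-B_2 B_1^\top\|_\mathrm{F}^2$ in~\eqref{eq:sect_curv_formula} vanishes identically --- and this is precisely the $\eta_1^2\eta_2^2$ contribution that pinned the generic bound at $1$ in \cref{sec:bounds_sectcurvature}. Re-running the term-by-term estimates of that section without this contribution should yield the sharper $K_\beta^{(n-1)}$ required above.

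The main obstacle will be carrying the parabolic optimization through cleanly on the gap $(\tfrac{1}{3},\tfrac{2}{3})$. As in \cref{sec:bounds_sectcurvature}, the reduced upper bound remains a downward-opening parabola in $\omega=\|B_1^\top B_2-B_2^\top B_1\|_\mathrm{F}$ whose maximum is attained either at the interior critical point or at the boundary $\omega=\sqrt{2}\,\eta_1\eta_2$; one then optimizes over $\alpha_1,\alpha_2$ subject to $\beta\alpha_i^2+\eta_i^2=1$ and verifies the required inequality in each subcase. Particular care is needed at $\beta=1/2$, where the coefficient $(1-2\beta)^3$ vanishes and the analysis partially decouples, and at the endpoints $\beta=1/3$ and $\beta=2/3$, where the new bound must stitch with \cref{thm:sect_curvature} without a jump.
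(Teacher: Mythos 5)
Your overall strategy coincides with the paper's: Klingenberg's identity~\eqref{eq:inj=min(l/2,conj)} together with the loop lengths from \cref{thm:shortest_beta_geos<=2,thm:closedgeod_beta>2} gives the bound $\leq\pi$ for $\beta>1$; the generic curvature bound of \cref{thm:sect_curvature} settles $\beta\in(0,\frac13]\cup[\frac23,1]$; and your key observation for the remaining gap $\beta\in(\frac13,\frac23)$ --- that for $p=n-1$ the blocks $B_i$ are row vectors, so $\|B_1B_2^\top-B_2B_1^\top\|_\mathrm{F}=0$ and the very term that pins the generic bound at $1$ disappears --- is exactly the insight behind the paper's Appendix~\ref{sec:injradp=n-1}, where the sharper bound $K_\beta=\frac{1}{2\beta}$ is derived.

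However, your proposal stops where the actual difficulty begins, and the sketch of the remaining computation is flawed. First, asserting that re-running the term-by-term estimates ``should yield'' $K_\beta^{(n-1)}\leq\frac{1}{2\beta}$ is precisely the claim that must be proven; in the paper this requires refined estimates (e.g.\ $\alpha_1\alpha_2\eta_1\eta_2\leq\frac{1}{\beta}(1-\eta_1\eta_2)\eta_1\eta_2$), a reduction to a downward-opening parabola in $x=\eta_1\eta_2$, and a final sign analysis of a quartic polynomial in $\beta$ on $(\frac13,\frac23)$ --- none of which is routine or present in your plan. Second, your description of the mechanics is incorrect on the gap interval: after the $\|B_1B_2^\top-B_2B_1^\top\|_\mathrm{F}$-term vanishes, the coefficient of $\omega^2$ in the reduced upper bound is $\frac{(1-2\beta)^3}{2}+\frac{\beta(3-4\beta)^2}{4}=\frac{2-3\beta}{4}$, which is \emph{positive} for $\beta\in(\frac13,\frac23)$, so the parabola in $\omega$ opens upward, not downward as in \cref{sec:bounds_sectcurvature} (where $\beta>\frac23$). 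Consequently the maximum over $\omega\in[0,\sqrt{2}\,\eta_1\eta_2]$ is always attained at the boundary $\omega=\sqrt{2}\,\eta_1\eta_2$, and your ``interior critical point'' case never occurs. This is recoverable --- substituting the boundary value is effectively what the paper's appendix does --- but as written your plan neither identifies the correct case structure nor carries out the verification that constitutes the substance of the theorem for $p=n-1$.
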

\begin{proof}
    For $\beta > 1$, the length of the shortest geodesic loops is given by $\ell_\beta = 2\pi$ in \cref{thm:shortest_beta_geos<=2,thm:closedgeod_beta>2}. Hence, by equation~\eqref{eq:inj=min(l/2,conj)}, we immediately obtain that the injectivity radius is bounded by $\pi$, i.e., $\mathrm{inj}(\mathrm{St}_\beta(n,n-1))\leq \pi$.

    For $\beta \leq 1$, in addition to the bounds on the length of the shortest geodesic loops (\cref{thm:shortest_beta_geos<=2}), we also have bounds on the sectional curvature (\cref{thm:sect_curvature}), which in turn provide bounds on the conjugate radius. From equations~\eqref{eq:inj=min(l/2,conj)} and~\eqref{eq:conjbound}, it follows that the injectivity radius of $\mathrm{St}_{\beta}(n,n-1)$ equals half the length of a shortest geodesic loop whenever
    \begin{align}\label{eq:closedgeodshortercurv_p=n-1}
        \frac{\ell_\beta}{2} \leq \frac{\pi}{\sqrt{K_\beta}}.
    \end{align} 
    It is straightforward to show that this inequality holds for $\beta\in(0,\frac13]\cup[\frac23,1]$. In Appendix \ref{sec:injradp=n-1}, we derive a sharper bound on the sectional curvature of $\mathrm{St}_\beta(n,n-1)$ for $\beta\in(\frac13,\frac23)$, given by $K_\beta = \frac{1}{2\beta}$. With this refined bound, inequality~\eqref{eq:closedgeodshortercurv_p=n-1} is also satisfied for $\beta\in(\frac13,\frac23)$. Consequently, the injectivity radius of $\mathrm{St}_\beta(n,n-1)$ is given by half the length of a shortest geodesic loop, $\frac{\ell_\beta}{2}$, for all $\beta\in[0,1]$.
\end{proof}

\subsection{Case $2\leq p\leq n-2$}

\begin{theorem}\label{thm:injectivity_radius}
    Let $2\leq p \leq n-2$ and consider the Stiefel manifold $\mathrm{St}_\beta(n,p)$. Let $K_\beta$ be the upper bound on the sectional curvature from \cref{thm:sect_curvature} and $\ell_\beta$ be the length of the shortest geodesic loops from \cref{thm:shortest_beta_geos<=2,thm:closedgeod_beta>2}. Define $t_\beta^\mathrm{r}\coloneq \min\{t > 0\ | \ \frac{\sin t}{t}+\frac{1-\beta}{\beta}\cos t=0\}$.
    Then, we obtain
    \[
        \mathrm{inj}(\mathrm{St}_\beta(n,p))
        \begin{cases}
            = \sqrt{2\beta}\pi & 0<\beta\leq\frac13,\\
            \in [ \frac{\pi}{\sqrt{K_\beta}}, \min\{\frac{\ell_\beta}{2},\sqrt{2}t_\beta^\mathrm{r}\} ] & \frac13<\beta<\frac23,\\
            = \pi & \frac23\leq\beta\leq 1,\\
            \leq \pi & 1 < \beta.
        \end{cases}
    \]
\end{theorem}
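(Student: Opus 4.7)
The plan is to combine Klingenberg's identity~\eqref{eq:inj=min(l/2,conj)} with the curvature-based lower bound~\eqref{eq:conjbound} on the conjugate radius, the exact values of $\ell_\beta$ supplied by \cref{thm:shortest_beta_geos<=2} and \cref{thm:closedgeod_beta>2}, and the upper bound $K_\beta$ on the sectional curvature from \cref{thm:sect_curvature}. These together sandwich $\mathrm{inj}(\mathrm{St}_\beta(n,p))$ between $\min\{\ell_\beta/2,\pi/\sqrt{K_\beta}\}$ from below and $\ell_\beta/2$ from above, together with any additional upper bound on the conjugate radius that we can extract from known explicit conjugate-point constructions. The bulk of the work is a case analysis on $\beta$ to determine which of these candidate quantities dominates on each subinterval.

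First I would dispatch the two regimes where the bounds coincide. For $\beta>1$, \cref{thm:closedgeod_beta>2} gives $\ell_\beta=2\pi$, so~\eqref{eq:inj=min(l/2,conj)} immediately yields $\mathrm{inj}(\mathrm{St}_\beta(n,p))\leq \ell_\beta/2=\pi$. For $\frac{2}{3}\leq\beta\leq 1$, \cref{thm:sect_curvature} gives $K_\beta=1$ and \cref{thm:shortest_beta_geos<=2} gives $\ell_\beta=2\pi$; both $\ell_\beta/2$ and $\pi/\sqrt{K_\beta}$ equal $\pi$, and Klingenberg's identity forces $\mathrm{inj}=\pi$.

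Next, for $0<\beta\leq\frac{1}{3}$, I would verify $\ell_\beta/2\leq \pi/\sqrt{K_\beta}$ in each $K_\beta$-subcase. For $\beta\leq\frac{1}{6}(4-\sqrt{10})$ we have $\pi/\sqrt{K_\beta}=2\sqrt{\beta}\,\pi\geq\sqrt{2\beta}\,\pi=\ell_\beta/2$. For $\frac{1}{6}(4-\sqrt{10})<\beta\leq\frac{1}{3}$ the condition $\sqrt{2\beta}\,\pi\leq\pi\sqrt{2/(4-3\beta)}$ reduces to $3\beta^2-4\beta+1\geq 0$, whose roots are $\frac{1}{3}$ and $1$, so the inequality holds throughout the interval. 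Combined with~\eqref{eq:inj=min(l/2,conj)} and~\eqref{eq:conjbound}, this yields $\mathrm{inj}=\ell_\beta/2=\sqrt{2\beta}\,\pi$.

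The hardest case is $\frac{1}{3}<\beta<\frac{2}{3}$, where the two candidate bounds $\ell_\beta/2$ and $\pi/\sqrt{K_\beta}$ genuinely differ. The same elementary algebra as above shows $\pi/\sqrt{K_\beta}<\ell_\beta/2$ throughout this interval (the cubic factorization $3\beta^2-4\beta+1$ now has the opposite sign, and a direct check at $\beta=\frac{1}{2}$ with $\ell_\beta/2=\pi$ confirms the strict inequality on the upper half), so~\eqref{eq:inj=min(l/2,conj)} together with~\eqref{eq:conjbound} produces the lower bound $\mathrm{inj}\geq\pi/\sqrt{K_\beta}$. The upper bound $\mathrm{inj}\leq\ell_\beta/2$ is immediate from Klingenberg. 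The complementary upper bound $\mathrm{inj}\leq\sqrt{2}\,t_\beta^{\mathrm{r}}$ is obtained by invoking the explicit conjugate point constructed in~\cite[Theorem~5.1]{absilmataigne2024ultimate}, whose existence is characterized by the equation $\frac{\sin t}{t}+\frac{1-\beta}{\beta}\cos t=0$, and which lies at $\beta$-distance $\sqrt{2}\,t_\beta^{\mathrm{r}}$ from $I_{n\times p}$; the presence of a conjugate point bounds the conjugate radius from above, hence also the injectivity radius via~\eqref{eq:inj=min(l/2,conj)}. Taking the minimum of the two upper bounds closes the theorem. The main obstacle is the proper interpretation of $t_\beta^{\mathrm{r}}$ against the conjugate-point construction of~\cite{absilmataigne2024ultimate}; everything else reduces to routine polynomial inequalities and the bookkeeping of which $K_\beta$-branch is active.
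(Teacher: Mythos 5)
Your proposal is correct and follows essentially the same route as the paper: Klingenberg's identity~\eqref{eq:inj=min(l/2,conj)} plus the curvature bound~\eqref{eq:conjbound}, a case analysis comparing $\frac{\ell_\beta}{2}$ with $\frac{\pi}{\sqrt{K_\beta}}$ on each $K_\beta$-branch, and the conjugate point of~\cite[Theorem~5.1]{absilmataigne2024ultimate} as the upper bound $\sqrt{2}\,t_\beta^{\mathrm{r}}$ on the conjugate radius for $\beta\in(\frac13,\frac23)$. The only difference is cosmetic: you spell out the polynomial inequalities (e.g.\ $3\beta^2-4\beta+1\geq 0$) that the paper dismisses as ``straightforward'' by deferring to the analogous proof for $p=n-1$.
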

\begin{proof}
    Analogously to the proof of \cref{thm:injrad_p=n-1}, we obtain that the injectivity radius is bounded by $\pi$ for $\beta > 1$ and that the injectivity radius is given by half the length of the shortest geodesic loops, $\frac{\ell_\beta}{2}$, for $\beta\in(0,\frac13]\cup[\frac23,1]$.

    For $\beta\in(\frac13,\frac23)$, the bound on the sectional curvature in \cref{thm:sect_curvature} is sharp and cannot be improved, unlike in the case $p = n-1$. In this range, the lower bound on the conjugate radius, $\frac{\pi}{\sqrt{K_\beta}}$, is smaller than half the length of the shortest geodesic loops, $\frac{\ell_\beta}{2}$. Hence, we cannot derive the exact value of the injectivity radius from equations~\eqref{eq:inj=min(l/2,conj)} and~\eqref{eq:conjbound} without knowing the exact value of the conjugate radius. From~\cite[Theorem~5.1]{absilmataigne2024ultimate}, it follows that $\sqrt{2}t_\beta^\mathrm{r}$ is an upper bound on the conjugate radius. Therefore,
    \[
    \frac{\pi}{\sqrt{K_\beta}}\leq \mathrm{conj}_{I_{n\times p}}(\mathrm{St}_\beta(n,p))\leq \sqrt{2}t_\beta^\mathrm{r}.
    \]
    By substituting into equation~\eqref{eq:inj=min(l/2,conj)}, we obtain $\mathrm{inj}(\mathrm{St}_\beta(n,p)) \in [\frac{\pi}{\sqrt{K_\beta}},\min\{\frac{\ell_\beta}{2},\sqrt{2}t_\beta^\mathrm{r}\}]$.
    For a visual illustration of the argument, see \cref{fig:InjRad}.
\end{proof}

\begin{figure}[h!]
  \centering
  \includegraphics[width=0.8\textwidth]{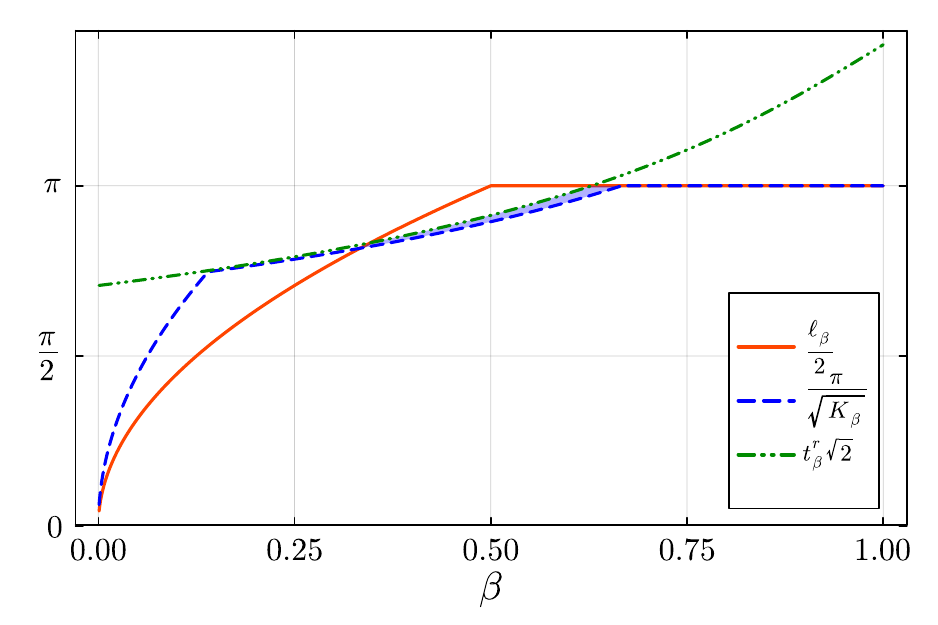}
  \caption{Graphical illustration of half the lengths of shortest geodesic loops $\frac{\ell_\beta}{2} = \min\{\sqrt{2\beta}\pi,\pi\}$ (solid red line), the lower bound on the conjugate radius $\frac{\pi}{\sqrt{K_\beta}}$ (blue dashed line) and the upper bound on the conjugate radius $\sqrt{2}t_\beta^\mathrm{r}$ (green dashed-dotted line) for $\beta\in(0,1]$. The injectivity radius satisfies $\min\{\sqrt{2\beta}\pi,\pi,\frac{\pi}{\sqrt{K_\beta}}\} \leq \mathrm{inj}(\mathrm{St}_\beta(n,p)) \leq \min\{\sqrt{2\beta}\pi,\pi,\sqrt{2}t_\beta^\mathrm{r}\}$. The thin blue shaded area for $\beta\in(\frac13,\frac23)$ is the region where the injectivity radius lies.}
  \label{fig:InjRad}
\end{figure}

For $\beta = 1$, the theorem reproduces the known result for the injectivity radius of the Stiefel manifold endowed with the Euclidean metric, which was previously shown to be $\pi$ in~\cite{zimmermannstoye_eucl_inj:2024}.
For $\beta\in(\frac13,\frac23)$, our result provides only an interval estimate for the injectivity radius, while for $\beta > 1$ we obtain merely an upper bound. This is due to the lack of an explicit expression for the conjugate radius. It is conjectured in~\cite[Conj.~8.1]{absilmataigne2024ultimate} that the upper bound on the injectivity radius is actually sharp, namely that  $\mathrm{inj}(\mathrm{St}_\beta(n,p)) = \min\{\sqrt{2\beta}\pi,\pi,\sqrt{2}t_\beta^\mathrm{r}\}$, for $\beta\in(\frac13,\frac23)$, and  $\mathrm{inj}(\mathrm{St}_\beta(n,p)) = \pi$, for $\beta > 1$. This conjecture is supported by numerical experiments in~\cite{absilmataigne2024ultimate}. The experiments employ an algorithm that investigates various geodesics of a given length. For each such geodesic, the algorithm searches for a shorter geodesic connecting the same endpoints. If a shorter geodesic is found, the original geodesic cannot be length-minimizing, implying that the injectivity radius is smaller than its length. By repeating this process for a range of lengths, the injectivity radius is effectively approached from above.
It should be noted that the evidence is more conclusive for $\beta \leq \frac{1}{2}$, since in that case the algorithm is guaranteed to return whenever the investigated length exceeds the injectivity radius. For $\beta > \frac{1}{2}$, returning is not formally ensured, although it is strongly expected in practice.

Note that the Stiefel manifold equipped with the canonical metric ($\beta = \frac12$) falls in this case where there is only a conjecture on the injectivity radius. We strengthen the conjecture for the canonical metric—namely, that the upper bound for the injectivity radius is sharp—by providing explicit expressions for all Jacobi fields along a specific geodesic of $\mathrm{St}_{\beta=\frac12}(4,2)$ in Appendix \ref{app:first_conjpoint}. These expressions indicate that, when $(n, p) = (4, 2)$ and $\beta = \frac12$, the conjugate points identified in~\cite[Theorem~5.1]{absilmataigne2024ultimate} are indeed first conjugate points.

In total, the injectivity radius of the Stiefel manifold under the $\beta$-metrics is known, or conjectured when $\beta >1$, to coincide with half the length of the shortest geodesic loops for all $\beta$ outside the interval $(\frac13,\frac23)$. Resolving the case $\beta \in (\frac13,\frac23)$ calls for a refined study of the conjugate radius.

\begin{appendices}
\section{Auxiliary lemmata for the proof of \texorpdfstring{\cref{lem:length_bound_phi}}{Lemma 5.7}}\label{app:singval_property}

In this section, we prove the property~\eqref{eq:sing_val_property_phi0pi} of the singular values of $X$ and $\breve{A}$ in the proof of \cref{lem:length_bound_phi} for $\varphi\in(0,\pi)$.
Let $s_1,s_2\in\mathbb{N}$ be positive integers, $\beta > 2$ and let $\widehat{A}\in\mathrm{Skew}(2s_1)$ and $\widehat{B}\in\mathbb{R}^{2s_2\times 2s_1}$ be two nonzero matrices such that they define a geodesic loop $\widehat{\gamma}_\beta(t) \coloneq \mathrm{Exp}_{\beta,U}\left(t\left[\begin{smallmatrix}
    \widehat{A}\\\widehat{B}
\end{smallmatrix}\right]\right)$ with starting point $U\coloneqq I_{2m\times 2s_1}$ and $m\coloneq s_1+s_2$, that satisfies the structure of~\eqref{eq:reduced_loop}, i.e.,
\begin{equation}\label{eq:loop_reminder}
    \exp_{\mathrm{m}}\begin{bmatrix}
        2\beta\widehat{A} & -\widehat{B}^\top\\
        \widehat{B} & 0
    \end{bmatrix} = \begin{bmatrix}
        G_{2 s_1}(\varphi I_{s_1}) & 0\\
        0 & G_{2 s_2}(\varphi I_{s_2})
    \end{bmatrix},\quad\text{and}\quad \exp_{\mathrm{m}}((1-2\beta)\widehat{A}) = G_{2 s_1}(-\varphi I_{s_1}),
\end{equation}
for $\varphi\in(0,\pi)$. 
Define $X\coloneq\begin{bmatrix}
    2\beta\widehat{A} & -\widehat{B}^\top\\
    \widehat{B} & 0
\end{bmatrix}$ and $\breve{A}\coloneq (1-2\beta)\widehat{A}$. 
The singular value pairs of $X$ are~\eqref{eq:singvals_X},
$$|\varphi + 2\pi k_j|,|\varphi + 2\pi k_j|,\text{ for }k_j\in\mathbb{Z}\text{ and }j\in\{1,\dots,m\}.$$
The singular value pairs of $\breve{A}$ are~\eqref{eq:singvals_A},
$$|\varphi + 2\pi\ell_j|,|\varphi+2\pi\ell_j|,\text{ for } \ell_j\in\mathbb{Z} \text{ and } j\in\{1,\dots,s_1\}.$$
We assume, w.l.o.g, that the singular value pairs are ordered by magnitude. 

\begin{lemma}\label[lemma]{lem:singval_property}
    Let $\widehat{\gamma}_\beta(t) \coloneq  \mathrm{Exp}_{\beta,U}\left(t\left[\begin{smallmatrix}
    \widehat{A}\\\widehat{B}
\end{smallmatrix}\right]\right)$ be the geodesic loop from~\eqref{eq:loop_reminder} and let $X\in\mathrm{Skew}(2m)$ and $\breve{A}\in\mathrm{Skew}(2s_1)$ be defined as above.
    Then, there is a $j_a\in\{1,\dots,s_1\}$ such that the difference between the $j_a$-th largest singular value pair of $X$ and the $j_a$-th largest singular value pair of $\breve{A}$ is at least $2\pi$, i.e., equation~\eqref{eq:sing_val_property_phi0pi} holds,
    \begin{align*}
        |\varphi + 2\pi k_{j_a}| - |\varphi + 2\pi\ell_{j_a}|\geq 2\pi.
    \end{align*}
\end{lemma}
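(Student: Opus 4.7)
The plan is a proof by contradiction that combines the interlacing bound of \cref{lem:interlacing} with the discrete grid structure of the admissible singular values and the additional constraints arising from the loop equation.

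First, I introduce the positive grid $\mathcal{V}^+ = \{V_r\}_{r\geq 0}$ with $V_{2s} = 2s\pi + \varphi$ and $V_{2s+1} = 2(s+1)\pi - \varphi$, so that every $w_j$ and $v_j$ lies in $\mathcal{V}^+$. The key property is the $2\pi$-periodicity $V_{r+2} - V_r = 2\pi$, while the two possible consecutive gaps $V_{r+1} - V_r$ are $2\pi - 2\varphi$ and $2\varphi$, both strictly less than $2\pi$ for $\varphi \in (0, \pi)$. This is the reason the clean integer-parity argument from the $\varphi\in\{0,\pi\}$ cases does not apply directly.

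Suppose for contradiction that $w_j - v_j < 2\pi$ for every $j \in \{1, \ldots, s_1\}$. By \cref{lem:interlacing} combined with $\beta > 2$, $w_j \geq \tfrac{2\beta}{2\beta - 1} v_j > v_j$ for every $j$. Writing $v_j = V_{r_j}$, the strict inequality $w_j > v_j$ together with $w_j \in \mathcal{V}^+$ forces $w_j \geq V_{r_j+1}$, while the assumption $w_j - v_j < 2\pi = V_{r_j+2} - V_{r_j}$ forces $w_j \leq V_{r_j+1}$. Hence $w_j = V_{r_j+1}$ for every $j$, and the ratio bound $V_{r_j+1}/V_{r_j} \geq \tfrac{2\beta}{2\beta-1}$ must hold. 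A case analysis on the parity of $r_j$ then confines $r_j$ to a bounded range depending on $\beta$ and $\varphi$, producing a finite list of candidate configurations that must be ruled out.

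Next, I couple the forced-adjacency relations with two matrix identities. The Frobenius identity
\begin{equation*}
    \sum_{j=1}^m w_j^2 = \tfrac{2\beta}{2\beta-1}\sum_{j=1}^{s_1}v_j^2 + \|\widehat{B}\|_\mathrm{F}^2,
\end{equation*}
already used in the proof of \cref{lem:length_bound_phi}, fixes the sum of squared singular values of $X$. Weyl's perturbation inequality applied to the decomposition $X = \operatorname{diag}(2\beta\widehat{A}, 0) + R$, where $R$ has spectral norm $\sigma_1(\widehat{B})$, gives the complementary upper bound $w_j \leq \tfrac{2\beta}{2\beta-1}v_j + \sigma_1(\widehat{B})$, which forces $\sigma_1(\widehat{B}) \geq V_{r_j+1} - \tfrac{2\beta}{2\beta-1}V_{r_j}$ simultaneously for every $j$. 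The remaining finitely many admissible configurations are then eliminated by invoking the block-diagonality of $\exp(X)$: the vanishing of its upper-right block is equivalent to $(\varphi_1(X))_{11}\widehat{B}^\top = 0$ with $\varphi_1(z) = (e^z-1)/z$, which together with the eigenvalue equation $\det(\widehat{B}^\top\widehat{B} - 2i\beta\theta_j\widehat{A} - \theta_j^2 I) = 0$ satisfied by each $\theta_j = w_j$ yields an algebraic incompatibility with the rigid adjacency pattern $w_j = V_{r_j+1}$.

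The hard part will be this final closure. Steps one through three confine the singular values to a narrow window and pin down the only candidate pattern, but they do not by themselves rule out the scenario in which every $w_j$ lands at the adjacent grid value $V_{r_j+1}$. Closing that window requires exploiting the fine block structure arising from the loop equation beyond the scalar interlacing and Frobenius identities, and this matrix-analytic step is where the bulk of the technical work in the appendix is expected to lie.
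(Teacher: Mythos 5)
Your opening moves are sound and in fact reproduce the first half of the paper's argument: assuming $|\varphi+2\pi k_j|-|\varphi+2\pi\ell_j|<2\pi$ for all $j$ and combining this with the interlacing bound $\frac{2\beta}{2\beta-1}|\varphi+2\pi\ell_j|\leq|\varphi+2\pi k_j|$ to force each singular value of $X$ onto the grid point immediately above that of $\breve{A}$ is exactly the content of the paper's \cref{lem:cond_on_k_l_for_contradiction} (your ``$w_j=V_{r_j+1}$'' is the same as the paper's dichotomy $k_j=-\ell_j-1$ for $\ell_j\geq 0$, $k_j=-\ell_j$ for $\ell_j<0$). But from that point on your proposal has a genuine gap, which you yourself concede in the final paragraph: the forced-adjacency pattern is \emph{not} contradicted by any of the scalar tools you list. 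The Frobenius identity (whose coefficient should in any case be $\bigl(\tfrac{2\beta}{2\beta-1}\bigr)^2$, not $\tfrac{2\beta}{2\beta-1}$) and the Weyl bound $w_j\leq\tfrac{2\beta}{2\beta-1}v_j+\sigma_1(\widehat{B})$ only constrain magnitudes, and the adjacency scenario is perfectly consistent with them; the asserted ``algebraic incompatibility'' coming from $(\varphi_1(X))_{11}\widehat{B}^\top=0$ and the eigenvalue equation is never derived, and it is precisely the step that carries all the difficulty. A plan whose concluding step is ``this is where the bulk of the technical work is expected to lie'' is not a proof of the lemma.

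What is missing is the paper's key structural input, which is not of singular-value type at all. The paper exploits the fact that the matrices conjugating $\breve{A}$ and $X$ to their canonical skew forms are $J$-orthosymplectic (\cref{lem:logGphiI}, \cref{lem:characteristics_J-orthosymplectic}); reading off the upper-left $2\times 2$ block of the resulting identity and using that the rows of an orthogonal matrix built from blocks $m_1 I_2+m_2 J_2$ produce nonnegative weights summing to one (\cref{cor:orthosymplectic_2x2blockDetails}), one gets the convex-combination constraint
\begin{equation*}
    \frac{1}{2\beta-1}\frac{\varphi}{2\pi}+\frac{2\beta}{2\beta-1}\,\ell_1\in\bigl[\min_j k_j,\ \max_j k_j\bigr],
\end{equation*}
which is the paper's \cref{lem:in_interval_minkjmaxkj}. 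The contradiction is then purely arithmetic (\cref{lem:convexcombi_imposible_under_assumption}): under the adjacency pattern $k_1=-\ell_1-1$ (resp.\ $k_1=-\ell_1$), this quantity lies strictly outside $[\min_j k_j,\max_j k_j]$. Without an ingredient of this kind --- one that couples the integer $\ell_1$ on the $\breve{A}$ side to the \emph{whole set} $\{k_j\}$ on the $X$ side through the loop equation --- the adjacency pattern cannot be ruled out, so your proposal as it stands does not close.
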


The proof of \cref{lem:singval_property} relies on a number of additional results. 
First, we need certain characteristics of $J_{2m}$-orthosymplectic matrices.

\begin{lemma}\label[lemma]{lem:characteristics_J-orthosymplectic}
    Let $m\in\mathbb{N}$ and let $M\in\mathbb{R}^{2m\times 2m}$ be an $J_{2m}$-orthosymplectic matrix. Then, the matrix $M$ has the block structure
    \begin{align}\label{eq:J-orthosymplectic_form}
        M = \begin{bmatrix} 
        M_{1,1} & \cdots & M_{1,m}\\
        \vdots & \ddots & \vdots\\
        M_{m,1} & \cdots & M_{m,m}
        \end{bmatrix},
    \end{align}
    with $2$-by-$2$ blocks $M_{i,j}$, for $1\leq i,j\leq m$, of the form
    \begin{align*}
        M_{i,j} = m_1^{(i,j)}I_2 + m_2^{(i,j)}J_2,
    \end{align*}
    for $m_1^{(i,j)},m_2^{(i,j)}\in\mathbb{R}$.
\end{lemma}
\begin{proof}
    Let $M$ be divided into $2$-by-$2$ blocks according to~\eqref{eq:J-orthosymplectic_form}.
    Since $M$ is $J_{2m}$-orthosymplectic, it commutes with the matrix $J_{2m}$. By the structure of $M$ and $J_{2m}$, every $2$-by-$2$ block $M_{i,j}$ commutes with $J_2$. Hence,
    \begin{align*}
        M_{i,j} = \begin{bmatrix}
            m_1^{(i,j)}&- m_2^{(i,j)}\\
            m_2^{(i,j)} & m_1^{(i,j)}
        \end{bmatrix} =  m_1^{(i,j)}I_2 + m_2^{(i,j)}J_2,
    \end{align*}
    for some $m_1^{(i,j)},m_2^{(i,j)}\in\mathbb{R}$.
\end{proof}

The following corollary is a direct consequence of the structure of a $J_{2m}$-orthosymplectic matrix. 

\begin{corollary}\label[corollary]{cor:orthosymplectic_2x2blockDetails}
    Let $m\in\mathbb{N}$ and let $M\in\mathbb{R}^{2m\times 2m}$ be an $J_{2m}$-orthosymplectic matrix and let $1\leq i\leq m$ be fixed. Then, from the structure~\eqref{eq:J-orthosymplectic_form} of $M$, we obtain
    \begin{align*}
        M_{i,j}M_{i,j}^\top = \alpha_j I_2,
    \end{align*}
    with $\alpha_j \coloneq \left(m_1^{(i,j)}\right)^2 + \left(m_2^{(i,j)}\right)^2\geq 0$, for $1\leq j\leq m$. Furthermore, we obtain from the equation $MM^\top = I_{2m}$ that 
    \begin{align*}
        \sum_{j=1}^{m}M_{i,j}M_{i,j}^\top = I_2.
    \end{align*}
    Therefore, it holds $\sum_{j=1}^m\alpha_j = 1$.
\end{corollary}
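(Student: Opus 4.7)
The plan is to prove the corollary by direct computation, relying entirely on \cref{lem:characteristics_J-orthosymplectic} and the fact that every $J_{2m}$-orthosymplectic matrix is, by definition of $\mathrm{ig}(J_{2m})\subseteq \mathrm{O}(2m)$, orthogonal. The proof splits naturally into two elementary steps followed by a one-line combination.

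First, I would verify the block identity $M_{i,j}M_{i,j}^\top = \alpha_j I_2$ by plugging in the explicit form from \cref{lem:characteristics_J-orthosymplectic}. Writing $M_{i,j} = m_1^{(i,j)} I_2 + m_2^{(i,j)} J_2$ and using the elementary identities $J_2^\top = -J_2$ and $J_2^2 = -I_2$, one gets
\begin{equation*}
M_{i,j} M_{i,j}^\top = \bigl(m_1^{(i,j)} I_2 + m_2^{(i,j)} J_2\bigr)\bigl(m_1^{(i,j)} I_2 - m_2^{(i,j)} J_2\bigr) = \bigl((m_1^{(i,j)})^2 + (m_2^{(i,j)})^2\bigr) I_2 = \alpha_j I_2,
\end{equation*}
which is the first claim, and clearly $\alpha_j\geq 0$.

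Next, I would use orthogonality of $M$. Since $M$ is $J_{2m}$-orthosymplectic, $M\in\mathrm{O}(2m)$, so $MM^\top = I_{2m}$. Computing the $(i,i)$ diagonal $2\times 2$ block of this product directly from the block partition in~\eqref{eq:J-orthosymplectic_form} gives
\begin{equation*}
\sum_{j=1}^{m} M_{i,j} M_{i,j}^\top = (MM^\top)_{i,i} = I_2,
\end{equation*}
which establishes the second displayed identity.

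Finally, combining the two steps, $\sum_{j=1}^m \alpha_j I_2 = I_2$, so $\sum_{j=1}^m \alpha_j = 1$. The whole argument is essentially formal bookkeeping once \cref{lem:characteristics_J-orthosymplectic} is in hand; there is no real obstacle, since the result is a direct structural consequence of the $J_{2m}$-orthosymplectic form and orthogonality.
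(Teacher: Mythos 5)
Your proof is correct and matches the paper's intended argument exactly: the paper states this corollary without a separate proof, presenting it as a direct consequence of \cref{lem:characteristics_J-orthosymplectic} and orthogonality, which is precisely the bookkeeping you carry out (the block computation via $J_2^\top=-J_2$, $J_2^2=-I_2$, and reading off the $(i,i)$ block of $MM^\top=I_{2m}$). Your explicit remark that $J_{2m}$-orthosymplectic matrices are orthogonal because $\mathrm{ig}(J_{2m})\subseteq\mathrm{O}(2m)$ is the one hypothesis the paper uses silently, and you invoke it correctly.
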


Define the diagonal matrices $\widehat{L}$ and $\widehat{K}$ as
\begin{align}
\widehat{L}\coloneq\begin{bmatrix}
        \ell_1 & & \\
         & \ddots & \\
         & & \ell_{s_1}
    \end{bmatrix}\in\mathbb{Z}^{s_1\times s_1},\quad \text{and},\quad \widehat{K}\coloneq\begin{bmatrix}
        k_1  & & \\
         & \ddots & \\
         & & k_{m}
    \end{bmatrix}\in\mathbb{Z}^{m\times m},
\end{align}
and
\begin{align*}
    L\coloneq \widehat{L}\otimes I_2\in\mathbb{Z}^{2s_1\times 2s_1},\quad \text{and},\quad  K\coloneq\widehat{K}\otimes I_2\in\mathbb{Z}^{2m\times 2m}.
\end{align*}
\begin{lemma}\label[lemma]{lem:in_interval_minkjmaxkj}
    Let $\widehat{\gamma}_\beta(t) =  \mathrm{Exp}_{\beta,U}\left(t\left[\begin{smallmatrix}
    \widehat{A}\\\widehat{B}
    \end{smallmatrix}\right]\right)$ be the geodesic loop from~\eqref{eq:loop_reminder} with $\beta > 2$ and $\varphi\in(0,\pi)$. Let, furthermore, $X\in\mathrm{Skew}(2m)$, $\breve{A}\in\mathrm{Skew}(2s_1)$ and their singular values be defined as above. 
    Then, it holds
    \begin{align}\label{eq:in_interval[minkj,maxkj]}
        \frac{1}{2\beta-1}\frac{\varphi}{2\pi} + \frac{2\beta}{2\beta-1}\ell_1 \in [\min_j k_j,\max_j k_j].
    \end{align}
\end{lemma}

\begin{proof}
    We start by showing that there is a $J_{2m}$-orthosymplectic matrix $M_X$ and a $J_{2s_1}$-orthosymplectic matrix $M_A$ such that
    \begin{align}\label{eq:MKM^T}
        \begin{bmatrix}
            \frac{1}{2\beta-1}\frac{\varphi}{2\pi}I_{2s_1}+\frac{2\beta}{2\beta-1}L & -\frac{1}{2\pi}J_{2s_1}^\top M_A^\top\widehat{B}^\top\\
            \frac{1}{2\pi}J_{2s_2}^\top\widehat{B}M_A & -\frac{\varphi}{2\pi}I_{2s_2}
        \end{bmatrix} = M_XKM_X^\top,
    \end{align}
    where $L$ and $K$ are defined as above. 
    Next, we obtain the desired equation~\eqref{eq:in_interval[minkj,maxkj]} from the upper left $2$-by-$2$ block of equation~\eqref{eq:MKM^T}.
    
    By the definition of the geodesic loop $\widehat{\gamma}_\beta$, it holds
    \begin{align*}
        \breve{A} = (1-2\beta)\widehat{A}\in\exp_\mathrm{m}^{-1}(G_{2s_1}(-\varphi I_{s_1})).
    \end{align*}
    Hence, by \cref{lem:logGphiI} and the definition of $\widehat{L}$ there is a $J_{2s_1}$-orthosymplectic matrix $M_A$ such that
    \begin{align*}
        \breve{A} = M_A\Omega_{2s_1}(-\varphi I_{s_1},-\widehat{L})M_A^\top.
    \end{align*}
    So,
    \begin{align*}
        M_A^\top \breve{A}M_A = \Omega_{2s_1}(-\varphi I_{s_1},-\widehat{L}) = J_{2s_1}(-\varphi I_{2s_1} - 2\pi L).
    \end{align*}
    In the middle term, the parentheses hold the input argument to the function $\Omega_{2s_1}$, while the right term is a matrix product. When applying the orthogonal transformation by $\begin{bmatrix}
        M_A & 0 \\
        0 & I_{2s_2}
    \end{bmatrix}$ to the matrix $X$, we obtain 
    \begin{align}
        \Breve{X} \coloneq& \begin{bmatrix}
            M_A^\top & 0\\
            0 & I_{2s_2}
        \end{bmatrix}X\begin{bmatrix}
            M_A & 0 \\
            0 & I_{2s_2}
        \end{bmatrix} = \begin{bmatrix}
            M_A^\top & 0\\
            0 & I_{2s_2}
        \end{bmatrix}\begin{bmatrix}
            2\beta\widehat{A} & -\widehat{B}^\top\\
            \widehat{B} & 0
        \end{bmatrix}\begin{bmatrix}
            M_A & 0 \\
            0 & I_{2s_2}
        \end{bmatrix}\notag\\
        =& \begin{bmatrix}
            \frac{2\beta}{1-2\beta}J_{2s_1}(-\varphi I_{2s_1} - 2\pi L) & -M_A^\top\widehat{B}^\top\\
            \widehat{B}M_A & 0
        \end{bmatrix}\notag\\
        =& J_{2m}\begin{bmatrix}
            \frac{2\beta}{2\beta-1}(\varphi I_{2s_1} + 2\pi L) & -J_{2s_1}^\top M_A^\top \widehat{B}^\top\\
            J_{2s_2}^\top \widehat{B} M_A & 0
        \end{bmatrix}\label{eq:X_T_F1}.
    \end{align}
    The matrix $\Breve{X}$ has the same singular values as $X$. 
    From the proof of \cref{lem:logGphiI} we know that $M_A$ is in the invariance group of $G_{2s_1}(-\varphi I_{2s_1})$, i.e., $M_A\in\mathrm{ig}(G_{2s_1}(-\varphi I_{2s_1}))$. It is easy to see that $M_A$ is also in the invariance group of $G_{2s_1}(\varphi I_{2s_1})$. Therefore, it holds that like $X$ also $\Breve{X}$ is a matrix exponential inverse of $G_{2m}(\varphi I_{2m})$,
    \begin{align*}
        \exp_{\mathrm{m}}(\Breve{X}) &= \begin{bmatrix}
            M_A^\top & 0\\
            0 & I_{2s_2}
        \end{bmatrix}\exp_{\mathrm{m}}(X)\begin{bmatrix}
            M_A & 0\\
            0 & I_{2s_2}
        \end{bmatrix}\\
        &= \begin{bmatrix}
            M_A^\top G_{2s_1}(\varphi I_{s_1})M_A & 0\\
            0 & G_{2s_2}(\varphi I_{s_2})
        \end{bmatrix} = \begin{bmatrix}
            G_{2s_1}(\varphi I_{s_1}) & 0\\
            0 & G_{2s_2}(\varphi I_{s_2})
        \end{bmatrix}\\
        &= \exp_{\mathrm{m}}(X).
    \end{align*}
    Therefore, by \cref{lem:logGphiI} there is a $J_{2m}$-orthosymplectic matrix $M_X$ such that
    \begin{align*}
        \Breve{X} = M_X\Omega_{2m}(\varphi I_{m},\widehat{K})M_X^\top.
    \end{align*}
    So, 
    \begin{align}
        \Breve{X} &= M_X\Omega_{2m}(\varphi I_{m},\widehat{K})M_X^\top = M_X J_{2m} (\varphi I_{2m} + 2\pi K) M_X^\top\notag\\
        &= J_{2m}(\varphi I_{2m} + 2\pi M_X K M_X^\top).\label{eq:X_T_F2}
    \end{align}
    Combining the two formulas~\eqref{eq:X_T_F1} and~\eqref{eq:X_T_F2} for $\Breve{X}$, we obtain
    \begin{align*}
        \begin{bmatrix}
            \frac{2\beta}{2\beta-1}(\varphi I_{2s_1} + 2\pi L) & -J_{2s_1}^\top M_A^\top \widehat{B}^\top\\
            J_{2s_2}^\top \widehat{B} M_A & 0
        \end{bmatrix} = \varphi I_{2m} + 2\pi M_X K M_X^\top.
    \end{align*}
    Therefore, we obtain equation~\eqref{eq:MKM^T}
    \begin{align*}
        \begin{bmatrix}
            \frac{1}{2\beta-1}\frac{\varphi}{2\pi}I_{2s_1}+\frac{2\beta}{2\beta-1}L & -\frac{1}{2\pi}J_{2s_1}^\top M_A^\top\widehat{B}^\top\\
            \frac{1}{2\pi}J_{2s_2}^\top\widehat{B}M_A & -\frac{\varphi}{2\pi}I_{2s_2}
        \end{bmatrix} = M_XKM_X^\top.
    \end{align*}
    Next, we extract the upper left $2$-by-$2$ block. To this end, we write the $J_{2m}$-orthosymplectic matrix $M_X$ in the block structure~\eqref{eq:J-orthosymplectic_form} from \cref{lem:characteristics_J-orthosymplectic},
    \begin{align*}
        M_X = \begin{bmatrix}
            M_{1,1} & \cdots & M_{1,m}\\
            \vdots & \ddots & \vdots\\
            M_{m,1} & \cdots & M_{m,m}
        \end{bmatrix}.
    \end{align*}
    Now, the upper left $2$-by-$2$ block of equation~\eqref{eq:MKM^T} is given by
    \begin{align*}
        \left(\frac{1}{2\beta-1}\frac{\varphi}{2\pi} + \frac{2\beta}{2\beta-1}\ell_1\right)I_2 &= I_{2m,2}^\top M_X K M_X^\top I_{2m,2}\\
        &= \sum_{j=1}^{m}M_{(1,j)}(k_jI_2)M_{(1,j)}^\top\\
        &= \sum_{j=1}^{m}k_jM_{(1,j)}M_{(1,j)}^\top.
    \end{align*}
    From \cref{cor:orthosymplectic_2x2blockDetails} (with $i=1$), we know that the products $M_{(1,j)}M_{(1,j)}^\top$ are diagonal and that there are coefficients $\alpha_j\geq 0$ such that
    \begin{align*}
        \left(\frac{1}{2\beta-1}\frac{\varphi}{2\pi} + \frac{2\beta}{2\beta-1}\ell_1\right)I_2 &= \sum_{j=1}^{m}k_j(\alpha_j I_2)
    \end{align*}
    and $\sum_{j=1}^{m} \alpha_j = 1$. As a direct consequence, the term $\frac{1}{2\beta-1}\frac{\varphi}{2\pi} + \frac{2\beta}{2\beta-1}\ell_1$ can be obtained by a convex combination of the integers $k_1,\dots,k_{m}$. Therefore,
    \begin{align*}
        \frac{1}{2\beta-1}\frac{\varphi}{2\pi} + \frac{2\beta}{2\beta-1}\ell_1\in[\min_j k_j,\max_j k_j].
    \end{align*}
    This concludes the proof. 
\end{proof}

To prove the existence of a $j_a\in\{1,\dots,s_1\}$ such that equation~\eqref{eq:sing_val_property_phi0pi} holds,
\begin{align*}
    |\varphi + 2\pi k_{j_a}| - |\varphi + 2\pi\ell_{j_a}|\geq 2\pi,
\end{align*}
we assume the opposite and lead this to a contradiction. \cref{lem:cond_on_k_l_for_contradiction,lem:convexcombi_imposible_under_assumption} will be used to prove that under the assumption that
\begin{align*}
    |\varphi + 2\pi k_{j}| - |\varphi + 2\pi\ell_{j}| < 2\pi
\end{align*} 
holds for all $j\in\{1,\dots,s_1\}$, the equation~\eqref{eq:in_interval[minkj,maxkj]} is not fulfilled. This gives us the desired contradiction.

\begin{lemma}\label[lemma]{lem:cond_on_k_l_for_contradiction}
    Let $\varphi\in(0,\pi)$ and let $k_j,\ell_j\in\mathbb{Z}$ be two integers. Assume that it holds 
    \begin{align}\label{eq:helper_singvaldiff_small}
        |\varphi + 2\pi\ell_j| < |\varphi + 2\pi k_j|.
    \end{align}
    Then, the inequality 
    \begin{align}\label{eq:singvaldiff_small}
        |\varphi + 2\pi k_j| - |\varphi + 2\pi\ell_j| < 2\pi
    \end{align}
    holds only true if $\ell_j \geq 0$ is non-negative and $k_j = -\ell_j-1$ or if $\ell_j < 0$ and $k_j = -\ell_j$.
\end{lemma}
\begin{proof}
    We start with the case where $\ell_j\geq 0$ is non-negative. In this case the term $\varphi + 2\pi\ell_j > 0$ is positive and equations~\eqref{eq:helper_singvaldiff_small} and~\eqref{eq:singvaldiff_small} give 
    \begin{align}\label{eq:l>0}
       \varphi + 2\pi\ell_j <|\varphi + 2\pi k_j| < \varphi + 2\pi(\ell_j+1).
    \end{align}
    For $\varphi + 2\pi k_j\geq 0$, equation~\eqref{eq:l>0} simplifies to $\ell_j< k_j<\ell_j+1$, which is impossible for $k_j\in\mathbb{Z}$.  For $\varphi + 2\pi k_j< 0$, and thus $k_j<0$, equation~\eqref{eq:l>0} gives 
    \begin{equation*}
     \ell_j <\frac{-\varphi}{\pi} + |k_j| < \ell_j+1\Longrightarrow k_j = -  \ell_j-1.
    \end{equation*}
    If $\varphi + 2\pi\ell_j < 0$ and thus $\ell_j < 0$, equations~\eqref{eq:helper_singvaldiff_small} and~\eqref{eq:singvaldiff_small} give 
    \begin{align}
       -\varphi + 2\pi|\ell_j| <|\varphi + 2\pi k_j| < -\varphi + 2\pi(|\ell_j|+1).
    \end{align}
    Similarly, $\varphi + 2\pi k_j<0$ is impossible and $\varphi + 2\pi k_j\geq 0$ yields $k_j = -\ell_j$.
\end{proof}

\begin{lemma}\label[lemma]{lem:convexcombi_imposible_under_assumption}
    Let $\varphi\in(0,\pi)$, $\beta > 2$, $m\in\mathbb{N}_{\geq 2}$ and let $\ell_1,k_1,\dots,k_m\in\mathbb{Z}$ be integers such that $|\varphi + 2\pi k_1|\geq \dots \geq |\varphi + 2\pi k_m|$. Furthermore, assume that if $\ell_1 \geq 0$ is non-negative, then $k_1 = -\ell_1-1$, and if $\ell_1 < 0$ is negative, then $k_1 = -\ell_1$. 
    Then,
    \begin{align}\label{eq:term_notin_[mink,maxk]}
        \frac{1}{2\beta-1}\frac{\varphi}{2\pi} + \frac{2\beta}{2\beta-1}\ell_1\notin [\min_j k_j, \max_j k_j].
    \end{align}
\end{lemma} 
\begin{proof}
    With $|\varphi + 2\pi k_1|\geq\dots\geq|\varphi + 2\pi k_{m}|$ and $\varphi\in(0,\pi)$, it also holds $|k_1|\geq\dots\geq|k_{m}|$.

    First, we tackle the case where $\ell_1 < 0$ and $k_1 = -\ell_1~(> 0)$. By $|k_1|\geq \dots\geq |k_m|$, it holds $\min_j k_j\geq-|k_1| = \ell_1$. Hence, it is sufficient to show that $\frac{1}{2\beta-1}\frac{\varphi}{2\pi} + \frac{2\beta}{2\beta-1}\ell_1<\ell_1$. 
    With $\frac{\varphi}{2\pi}<\frac12<|\ell_1|$, we have $\frac{\varphi}{2\pi} + \ell_1 < 0$. So,
    \begin{align*}
        \frac{1}{2\beta-1}\frac{\varphi}{2\pi} + \frac{2\beta}{2\beta-1}\ell_1 = \frac{1}{2\beta-1}\left(\frac{\varphi}{2\pi} + \ell_1\right) + \ell_1 < \ell_1.
    \end{align*}
    Next, we tackle the case where $\ell_1\geq 0$ and $k_1 = -\ell_1 - 1~(< 0)$. By $|k_1|\geq \dots\geq |k_m|$, it holds $k_{\mathrm{max}} \coloneq \max_j k_j \leq |k_1| = \ell_1 + 1$. If we assume $k_{\mathrm{max}} = |k_1|$, we have $k_{\mathrm{max}} \neq k_1$ and therefore, according to the requirements of this lemma, $|\varphi + 2\pi k_1| \geq |\varphi + 2\pi k_{\mathrm{max}}|$. This leads to the contradiction $-\varphi\geq \varphi$. Hence, $k_{\mathrm{max}}\leq |k_1| - 1 = \ell_1$ and it is sufficient to show that $\frac{1}{2\beta-1}\frac{\varphi}{2\pi} + \frac{2\beta}{2\beta-1}\ell_1 > \ell_1$. With $\varphi > 0$, we have
    \begin{align*}
        \frac{1}{2\beta-1}\frac{\varphi}{2\pi} + \frac{2\beta}{2\beta-1}\ell_1 > \frac{2\beta}{2\beta-1}\ell_1 \geq \ell_1.
    \end{align*}
\end{proof}

Now, we formulate the proof of \cref{lem:singval_property} with the help of the previous lemmata.
\begin{proof}(\cref{lem:singval_property})

    Let us recall that we want to prove that, given the geodesic loop $\widehat{\gamma}_\beta$ and the corresponding matrices $X$ and $\breve{A}$, there exists a $j_a\in\{1,\dots,s_1\} $ such that the difference between the $j_a$-largest singular value pair of $X$ and the $j_a$-largest singular value pair of $\breve{A}$ is at least $2\pi$., i.e., equation~\eqref{eq:sing_val_property_phi0pi} holds,
    \begin{align*}
        |\varphi + 2\pi k_{j_a}| - |\varphi + 2\pi\ell_{j_a}|\geq 2\pi.
    \end{align*}
    First, by \cref{lem:in_interval_minkjmaxkj}, the existence of the geodesic loop $\widehat{\gamma}_\beta$ leads to the equation~\eqref{eq:in_interval[minkj,maxkj]}
    \begin{align*}
        \frac{1}{2\beta-1}\frac{\varphi}{2\pi} + \frac{2\beta}{2\beta-1}\ell_1\in[\min_j k_j, \max_j k_j].
    \end{align*}
    To prove the existence of a $j_a\in\{1,\dots,s_1\}$ such that equation~\eqref{eq:sing_val_property_phi0pi} holds, we assume the contrary and lead this to a contradiction. 
    So, we assume that 
    \begin{align}\label{eq:assumption_for_contradiction}
        |\varphi + 2\pi k_{j}| - |\varphi + 2\pi\ell_{j}| < 2\pi
    \end{align} 
    holds for all $j\in\{1,\dots,s_1\}$. By the interlacing property of singular values (\cref{lem:interlacing}), we obtain that 
    \begin{align*}
        |\varphi + 2\pi\ell_{j}| < \frac{2\beta}{2\beta-1}|\varphi + 2\pi\ell_{j}|\leq |\varphi + 2\pi k_{j}|
    \end{align*}
    holds for all $j\in\{1,\dots,s_1\}$. Therefore, we are able to apply  \cref{lem:cond_on_k_l_for_contradiction} (for $j=1$) to obtain that $k_1 = -\ell_1-1$, if $\ell_1\geq 0$, and $k_1 = -\ell_1$, if $\ell_1 < 0$. Hence, the requirements of \cref{lem:convexcombi_imposible_under_assumption} are fulfilled and we obtain that the contrary to equation~\eqref{eq:in_interval[minkj,maxkj]} holds. This gives the desired contradiction and concludes the proof.
\end{proof}

\section{Bounds on the sectional curvature of the Stiefel manifold under the family of \texorpdfstring{$\beta$}{beta}-metrics for \texorpdfstring{$p=n-1$}{p=n-1} and \texorpdfstring{$\beta\in(\frac13,\frac23)$}{beta in (1/3, 2/3)}}\label{sec:injradp=n-1}

From \cref{thm:sect_curvature}, we obtain bounds on the sectional curvature of the Stiefel manifold $\mathrm{St}_\beta(n,p)$. When $p = n-1$, these bounds are not sharp and can be improved. In view of the proof of \cref{thm:injrad_p=n-1}, we need sharper bounds for $\beta\in(\frac13,\frac23)$ in order to determine the injectivity radius for all values of $\beta\in(0,1]$.

\begin{theorem}
    Let $\beta\in(\frac13,\frac23)$. The sectional curvature of the Stiefel manifold $\mathrm{St}_\beta(n,n-1)$ is bounded above by $K_\beta \coloneq \frac{1}{2\beta}$.
\end{theorem}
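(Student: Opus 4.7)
The plan is to follow the framework of Section~\ref{sec:bounds_sectcurvature} while exploiting a dimension-specific simplification available when $p = n-1$. Specifically, in this case the matrices $B_1, B_2\in\mathbb{R}^{1\times p}$ are row vectors, so $B_1 B_2^\top$ and $B_2 B_1^\top$ are scalars and therefore equal. Consequently the first term $\frac{1}{2}\|B_1 B_2^\top - B_2 B_1^\top\|_\mathrm{F}^2$ in the sectional curvature formula~\eqref{eq:sect_curv_formula} vanishes identically, regardless of the choice of $\Delta_1,\Delta_2$.

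Applying the same term-by-term estimates as in Section~\ref{sec:bounds_sectcurvature} to the remaining three summands---but now without the $\eta_1^2\eta_2^2$ contribution from the vanished term---yields
\begin{align*}
\mathcal{K}_\beta(\Delta_1,\Delta_2) \leq \frac{2-3\beta}{4}\omega^2 + \frac{\beta(3-4\beta)}{2}\alpha_1\alpha_2\omega + \frac{\beta^2}{2}(\eta_1\alpha_2 + \eta_2\alpha_1)^2 + \frac{\beta}{4}\alpha_1^2\alpha_2^2.
\end{align*}
For $\beta\in(\frac{1}{3},\frac{2}{3})$, the coefficient $\frac{2-3\beta}{4}$ is strictly positive, so the right-hand side is a convex quadratic in $\omega$, whose maximum over the admissible range $\omega\in[0,\sqrt{2}\eta_1\eta_2]$ (from the Wu-Chen inequality) is attained at one of the endpoints.

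To close the argument, I would use the substitution $\alpha_i = \sin\theta_i/\sqrt{\beta}$, $\eta_i = \cos\theta_i$ with $\theta_i\in[0,\pi/2]$, which automatically encodes the orthonormality constraint $\beta\alpha_i^2+\eta_i^2=1$. At the endpoint $\omega = 0$, the bound reduces to $\frac{\beta}{2}\sin^2(\theta_1+\theta_2) + \frac{1}{4\beta}\sin^2\theta_1\sin^2\theta_2$, which is at most $\frac{\beta}{2}+\frac{1}{4\beta}\leq\frac{1}{2\beta}$ since $2\beta^2\leq 1$ holds throughout our interval (as $\frac{2}{3}<\frac{1}{\sqrt{2}}$). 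At the endpoint $\omega = \sqrt{2}\eta_1\eta_2$, the same substitution produces a trigonometric polynomial in $(\theta_1,\theta_2)$ that reduces similarly to $\frac{1}{2\beta}$ after simplification with double-angle identities. The main obstacle I anticipate is this latter optimization: the mix of terms $\cos^2\theta_1\cos^2\theta_2$, $\sin(2\theta_1)\sin(2\theta_2)$, $\sin^2(\theta_1+\theta_2)$, and $\sin^2\theta_1\sin^2\theta_2$ interacts nontrivially, and the sign of the coefficient $\frac{1}{4\beta}-2\beta$ changes at $\beta=\frac{1}{2\sqrt{2}}\in(\frac{1}{3},\frac{2}{3})$, so the analysis will likely split into sub-cases above and below this threshold.
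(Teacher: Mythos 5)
Your setup coincides with the paper's own proof of this theorem (given in \cref{sec:injradp=n-1}): the key observation that for $p=n-1$ the blocks $B_1,B_2\in\mathbb{R}^{1\times(n-1)}$ force $B_1B_2^\top-B_2B_1^\top=0$, the recombination of the surviving $\omega^2$-coefficients $\frac{(1-2\beta)^3}{2}+\frac{\beta(3-4\beta)^2}{4}$ into $\frac{2-3\beta}{4}$, and the term-by-term estimates are all identical; indeed, after expanding $\frac{\beta^2}{2}(\eta_1\alpha_2+\eta_2\alpha_1)^2=\frac{\beta^2}{2}(\eta_1^2\alpha_2^2+\eta_2^2\alpha_1^2)+\beta^2\alpha_1\alpha_2\eta_1\eta_2$, your bound evaluated at $\omega=\sqrt{2}\eta_1\eta_2$ is exactly the paper's intermediate bound. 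Your convexity argument for restricting to the endpoints of $[0,\sqrt2\eta_1\eta_2]$ is sound (in fact both coefficients are nonnegative for $\beta\in(\frac13,\frac23)$, so only the right endpoint matters), and your treatment of $\omega=0$ is correct and complete: $\frac{\beta}{2}\sin^2(\theta_1+\theta_2)+\frac{1}{4\beta}\sin^2\theta_1\sin^2\theta_2\le\frac{\beta}{2}+\frac{1}{4\beta}\le\frac{1}{2\beta}$ since $\beta<\frac23<\frac{1}{\sqrt2}$.

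The genuine gap is the endpoint $\omega=\sqrt2\eta_1\eta_2$, which you assert "reduces similarly to $\frac{1}{2\beta}$ after simplification with double-angle identities" but do not prove---and this is the mathematical core of the theorem, as you yourself anticipate by calling it the main obstacle. It does not reduce "similarly": unlike at $\omega=0$, bounding each term by its individual maximum fails here, because in the expression $\frac{2-3\beta}{2}\cos^2\theta_1\cos^2\theta_2+\frac{3-4\beta}{4\sqrt2}\sin(2\theta_1)\sin(2\theta_2)+\frac{\beta}{2}\sin^2(\theta_1+\theta_2)+\frac{1}{4\beta}\sin^2\theta_1\sin^2\theta_2$ the individual maxima sum to $\frac{2-3\beta}{2}+\frac{3-4\beta}{4\sqrt2}+\frac{\beta}{2}+\frac{1}{4\beta}$, which exceeds $\frac{1}{2\beta}$ (at $\beta=\frac12$ it is $\approx 1.18>1$); the terms attain their maxima at incompatible configurations ($\theta_i=0$ versus $\theta_i=\frac{\pi}{2}$), so a genuinely coupled two-variable optimization is unavoidable. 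The paper resolves exactly this by two further estimates, $\alpha_1\alpha_2\eta_1\eta_2\le\frac{1}{\beta}(1-\eta_1\eta_2)\eta_1\eta_2$ and $\frac{2\beta^2-1}{4\beta}(\eta_1^2+\eta_2^2)\le\frac{2\beta^2-1}{2\beta}\eta_1\eta_2$ (the latter valid because $2\beta^2-1<0$), which collapse everything to a concave parabola in the single variable $x=\eta_1\eta_2\in[0,1]$; bounding its vertex value by $\frac{1}{2\beta}$ then amounts to showing a quartic polynomial in $\beta$ is non-positive on $(\frac13,\frac23)$, verified by monotonicity and evaluation at $\beta=\frac13$. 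Until you carry out an argument of this kind (or your proposed case analysis around $\beta=\frac{1}{2\sqrt2}$ in full), the proof is incomplete precisely at the step where all the difficulty lies.
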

\begin{proof}
    For a $\beta$-orthonormal basis $\{\Delta_1,\Delta_2\} \in T_{I_{n\times (n-1)}}\mathrm{St}_\beta(n,n-1)$ of a two-dimensional tangent space section, the matrix blocks are related by 
    $\|\Delta_i\|_\beta^2 = \beta \|A_i\|_\mathrm{F}^2 + \|B_i\|_\mathrm{F}^2 =1$ for $i=1,2$ and
    $\langle \Delta_1,\Delta_2\rangle_\beta =\beta \mathrm{tr}(A_1^\top A_2) + \mathrm{tr}(B_1^\top B_2)=0$. Furthermore, $B_1,B_2$ are $1$-by-$(n-1)$ dimensional matrices and therefore $\|B_1B_2^\top - B_2B_1^\top\|_\mathrm{F} = 0$. A reformulation of the formula for the sectional curvature~\eqref{eq:sect_curv_formula} leads to
    \begin{align*}
        \mathcal{K}_\beta(\Delta_1,\Delta_2) =& \frac{2-3\beta}{4}\|B_1^\top B_2-B_2^\top B_1\|_\mathrm{F}^2 + \beta^2\|B_1A_2 - B_2A_1\|_\mathrm{F}^2\\
        &+\frac{\beta}{4}\|[A_1,A_2]\|_\mathrm{F}^2 - \frac{\beta}{2}(3-4\beta)\mathrm{tr}\left([A_1,A_2]^\top(B_1^\top B_2 - B_2^\top B_1)\right),
    \end{align*}
    see also~\cite[eq.~(35)]{nguyen2022curvature}. As explained in~\cite{zimmermannstoye_curvature:2024}, we have the following term-by-term estimates,
    where $\alpha_i = \|A_i\|_F$, $\eta_i = \|B_i\|_F$:
    \begin{itemize}
        \item $\frac12\|B_1^\top B_2 - B_2^\top B_1\|_\mathrm{F}^2\leq \eta_1^2\eta_2^2$,
        \item $\beta^2\|B_1A_2 - B_2A_1\|_\mathrm{F}^2\leq \frac{\beta^2}2\eta_1^2\alpha_2^2 + \frac{\beta^2}2 \eta_2^2 \alpha_1^2 + \beta^2\alpha_1\alpha_2\eta_1\eta_2$,
        \item $\| [A_1,A_2]\|_\mathrm{F}^2\leq \alpha_1^2\alpha_2^2$,
        \item $|\frac{\beta}{2}(3-4\beta)\mathrm{tr}\left([A_1,A_2]^\top(B_1^\top B_2-B_2^\top B_1)\right)| \leq \frac{\beta(3-4\beta)}{\sqrt{2}}\alpha_1\alpha_2\eta_1\eta_2$.
    \end{itemize}
    We start by using the term-by-term estimates and the relation $\alpha_i^2 = \frac{1}{\beta}(1-\eta_i^2)$ to obtain the upper bound
    \begin{align}
        \mathcal{K}_\beta(\Delta_1,\Delta_2) \leq& \frac{2-3\beta}{2}\eta_1^2\eta_2^2 + \frac{\beta^2}2(\eta_1^2\alpha_2^2 + \eta_2^2 \alpha_1^2) \notag\\
        &+ \frac{\beta}{4}\alpha_1^2\alpha_2^2 + \left(\beta^2 + \frac{\beta(3-4\beta)}{\sqrt{2}}\right)\alpha_1\alpha_2\eta_1\eta_2 \notag \\
        \label{eq:intermediate_upperbound}
        \begin{split}
        =& \left(\frac{2-3\beta}{2} - \beta + \frac{1}{4\beta}\right)\eta_1^2\eta_2^2 + \left(\frac{2\beta^2-1}{4\beta}\right)(\eta_1^2 + \eta_2^2)\\
        &+\left(\beta^2 + \frac{\beta(3-4\beta)}{\sqrt{2}}\right)\alpha_1\alpha_2\eta_1\eta_2 + \frac{1}{4\beta}.
        \end{split}
    \end{align}
    The factor in front of the $\alpha_1\alpha_2\eta_1\eta_2$-term is positive and the factor in front of the $(\eta_1^2 + \eta_2^2)$-term is negative (for $\beta\in(\frac13,\frac23)$). Therefore, we may use the estimate $t_1t_2\leq\frac12(t_1^2+t_2^2)$ and $\alpha_i^2 = \frac{1}{\beta}(1-\eta_i^2)$ to obtain:
    \begin{itemize}
        \item $\alpha_1\alpha_2\eta_1\eta_2 \leq \frac12(\alpha_1^2 + \alpha_2^2)\eta_1\eta_2 = \frac{1}{2\beta}(2-(\eta_1^2+\eta_2^2))\eta_1\eta_2\leq\frac{1}{\beta}(1-\eta_1\eta_2)\eta_1\eta_2$,

        \item $\left(\frac{2\beta^2-1}{4\beta}\right)(\eta_1^2 + \eta_2^2) \leq \left(\frac{2\beta^2-1}{2\beta}\right)\eta_1\eta_2$.
    \end{itemize}
    Hence, we obtain an upper bound on the sectional curvature given by
    \begin{align*}
       ~\eqref{eq:intermediate_upperbound} \leq& \left(\frac{2-3\beta}{2} - \beta + \frac{1}{4\beta}\right)\eta_1^2\eta_2^2 + \left(\frac{2\beta^2-1}{2\beta}\right)\eta_1\eta_2\\
        &+\left(\beta + \frac{(3-4\beta)}{\sqrt{2}}\right)(\eta_1\eta_2 - \eta_1^2\eta_2^2) + \frac{1}{4\beta}\\
        =& -\underbrace{\frac{1}{4\beta}\left(2\beta^2(7-4\sqrt{2})+2\beta(3\sqrt{2}-2)-1\right)}_{=:a_\beta}x^2 \\
        &+ \underbrace{\frac{1}{2\beta}\left(2\beta^2(2-2\sqrt{2}) + 3\sqrt{2}\beta - 1\right)}_{=:b_\beta}x + \frac{1}{4\beta},
    \end{align*}
    for $x\coloneq \eta_1\eta_2\in[0,1]$. It holds $a_\beta > 0$ for $\beta\in(\frac13,\frac32)$. So, this is a parabola $p_\beta(x)$ in $x$ with a downward opening. The parabola attains its maximum value at $x_{\max} = \frac12\frac{b_\beta}{a_\beta}$.
    In order to show that the sectional curvatures of $\mathrm{St}_\beta(n,p)$ are bounded by $\frac{1}{2\beta}$, for $\beta\in(\frac13,\frac23)$, it is sufficient to show that 
    \begin{align*}
        p_\beta(x_{\max}) = \frac{1}{4\beta} + \frac14\frac{b_\beta^2}{a_\beta}\leq \frac{1}{2\beta}.
    \end{align*}
    With $a_\beta > 0$, we obtain equivalently $b_\beta^2 - \frac{a_\beta}{\beta}\leq 0$ and it can be shown that this is equivalent a problem of showing that a polynomial of degree $4$ is non-positive for $\beta\in(\frac13,\frac23)$,
    \begin{align*}
        \beta^4(48-32\sqrt{2}) + \beta^3(24\sqrt{2}-48) + \beta^2(16\sqrt{2}-4) + \beta(4-12\sqrt{2}) + 2\leq 0.
    \end{align*}
    One can prove that the polynomial is monotonically decreasing in the $\beta$-interval of interest. Therefore, the quartic polynomial is smaller than its value at the left boundary ($\beta = \frac13$), which is $\frac{2}{81}(69-70\sqrt{2})\approx -0.74$. Hence, the quartic polynomial is negative in the $\beta$-interval of interest and the inequality holds true. In total, we obtain that the sectional curvature is bounded above by $K_\beta\coloneq \frac{1}{2\beta}$, which concludes the proof.
\end{proof}

Note that it remains an open problem whether this bound on the sectional curvature is sharp.

\section{A first conjugate point on \texorpdfstring{$\mathrm{St}_\frac12(4,2)$}{St(4,2) for beta=1/2}}\label{app:first_conjpoint}
The conjugate radius is given by the minimum of the geodesic distances between the starting point and the first conjugate point of geodesics on the manifold. The Theorem 5.1 in~\cite{absilmataigne2024ultimate} gives conjugate points at a geodesic distance of $\sqrt{2}t_\beta^\mathrm{r}$ with $t_\beta^\mathrm{r}$ being the first positive root of $\frac{\sin(t)}{t} + \frac{1-\beta}{\beta}\cos(t)$. This provides an upper bound on the conjugate radius. In this section, we show that the conjugate point found in~\cite[Theorem 5.1]{absilmataigne2024ultimate} describes a first conjugate point on $\mathrm{St}_\frac12(4,2)$. This strengthens the conjecture made in~\cite[Conj. 8.1]{absilmataigne2024ultimate} that the upper bound on the conjugate radius is sharp. 
Since the Stiefel manifold is a homogeneous space, we choose the starting point of the geodesic to be at $U\coloneq I_{4\times 2}$. The initial velocity of the geodesic considered in~\cite[Theorem 5.1]{absilmataigne2024ultimate} is given by $\begin{bmatrix}
    A\\B
\end{bmatrix}\in T_{I_{4\times 2}}\mathrm{St}_\frac12(4,2)$, with $A = 0$ and $B = \frac{1}{\sqrt{2}} I_2$ when parameterized by its arc-length.
So, we consider the geodesic $\gamma(t) \coloneq \mathrm{Exp}_{\frac12,I_{4\times 2}}\left(t\begin{bmatrix}
    A\\B
\end{bmatrix}\right)$. 

Conjugate points are characterized as zeros of special vector fields along a geodesic, the so-called Jacobi fields. Jacobi fields, in turn, are determined by an ordinary second-order differential equation. For a geodesic $\gamma$ and a tangent vector $W\in T_{\gamma^\prime(0)}(T_U \mathrm{St}_\frac12(4,2))$, the associated Jacobi field takes the explicit form of 
\begin{align}\label{eq:Jacobifield}
    J(t) \coloneq (\mathrm{d}\mathrm{Exp}_{\frac12,U})_{t\gamma^\prime(0)}(tW),
\end{align}
cf.~\cite[Chap. 5, Cor. 2.5]{docarmo}. By definition~\cite[Chap. 5, Def. 3.1]{docarmo}, a point $\gamma(t_1)$ is conjugate to the starting point $\gamma(0) = U$ along the geodesic if there exists a non-zero Jacobi field along $\gamma$ such that $J(0) = J(t_1) = 0$. 

To determine conjugate points along the geodesic $\gamma$, we need to calculate the directional derivative of the Riemannian exponential on $T_{I_{4\times 2}}\mathrm{St}_\frac12(4,2)$ with base at $t\gamma^\prime(0)$ in the direction $tW$. We follow the approach of~\cite[Section 4.2]{ZimmermannHermite_2020}.
The tangent $\gamma^\prime(0) = \begin{bmatrix}
    A\\B
\end{bmatrix}$ is given by the matrices $A$ and $B$. Since a tangent space to a vector space can be identified with the vector space itself (see e.g.~\cite[p.13]{grassmann}), $T_{\gamma^\prime(0)}(T_{I_{4\times 2}}\mathrm{St}_\frac12(4,2))\cong T_{I_{4\times 2}}\mathrm{St}_\frac12(4,2)$, the direction of variation $W\in T_{\gamma^\prime(0)}(T_{I_{4\times 2}}\mathrm{St}_\frac12(4,2))$ can be parameterized analogously by some $A_w\in\mathrm{Skew}(2)$ and $B_w\in\mathbb{R}^{2\times 2}$.
According to the formula for the Riemannian exponential of the Stiefel manifold with respect to the canonical metric
\begin{align*}
    \mathrm{Exp}_{\frac12,I_{4,2}}(t\gamma^\prime(0)) = \exp_{\mathrm{m}}\left(t\begin{bmatrix}
        A & -B^\top\\
        B & 0
    \end{bmatrix}\right)I_{4\times 2},
\end{align*}
the calculation of its directional derivative boils down to extracting the first two columns from the directional derivative of the matrix exponential at 
\begin{scriptsize}
$t
    \begin{pmatrix} A & -B^\top\\B&0
    \end{pmatrix}
$
\end{scriptsize} in the direction 
\begin{scriptsize}
$t\begin{pmatrix} A_w & -B_w^\top\\B_w&0\end{pmatrix}$
\end{scriptsize}. Najfeld and Havel~\cite{NAJFELD1995321} provide a formula for calculating the directional derivative of the matrix exponential. The directional derivative $D\exp_\mathrm{m}(X)[Y]$ can be calculated via
\begin{align}\label{eq:expmDirectDeriv}
\exp_\mathrm{m}\left(\begin{pmatrix}X&Y\\0&X\end{pmatrix}\right) = \begin{pmatrix}\exp_\mathrm{m}(X)&\mathrm{D}\exp_\mathrm{m}(X)[Y]\\0&\exp_\mathrm{m}(X)\end{pmatrix}.
\end{align}
This goes by the name of Mathias' Theorem in~\cite[Theorem 3.6]{Higham:2008:FM}.
Next, we explicitly write down a basis of the vector space of Jacobi fields along the geodesic $\gamma$. Those basis elements involve trigonometric functions. The problem of finding the first conjugate point along the geodesic is then reduced to the problem of finding the smallest positive zero of those trigonometric functions. 
By a standard result on Jacobi fields, see e.g.~\cite{docarmo}, there are $5 = \text{dim}(\mathrm{St}_\frac12(4,2))$ linearly independent Jacobi fields along $\gamma$ when the starting point $J(0) = 0$ is fixed. They can be obtained from~\eqref{eq:Jacobifield} by choosing linearly independent directions $W$, see~\cite[Chap. 5, Remark 3.2]{docarmo}. Five linearly independent directions $W_i \in T_{U_0}St(n,p)$ are 
\begin{align*}
	W_1 &= U\begin{pmatrix}0&-1\\1&0\end{pmatrix},\quad W_2 = U^\perp\begin{pmatrix}1&-1\\1&1\end{pmatrix}, \quad  W_3 = U^\perp\begin{pmatrix}1&1\\-1&1\end{pmatrix},\\
	W_4 &= U^\perp\begin{pmatrix}1&1\\1&-1\end{pmatrix},\quad 
	W_5 = U^\perp\begin{pmatrix}-1&1\\1&1\end{pmatrix}. 
\end{align*}
Calculating the Jacobi fields via the formula for the directional derivative of the matrix exponential~\eqref{eq:expmDirectDeriv} can be done with the Jordan canonical form. We obtain 
\begin{align*}
	J_1(t) &=(\mathrm{d}\mathrm{Exp}_{\frac12,I_{4\times 2}})_{t\gamma^\prime(0)}[tW_1]= \begin{pmatrix}
		0&-\frac{t\cos\left(\frac{t}{\sqrt{2}}\right) + \sqrt{2}\sin\left(\frac{t}{\sqrt{2}}\right)}{2}\\
		\frac{t\cos\left(\frac{t}{\sqrt{2}}\right) + \sqrt{2}\sin\left(\frac{t}{\sqrt{2}}\right)}{2} & 0\\
		0&-\frac{t}{2}\sin\left(\frac{t}{\sqrt{2}}\right)\\
		\frac{t}{2}\sin\left(\frac{t}{\sqrt{2}}\right)&0
	\end{pmatrix},\\
	J_2(t) &=(\mathrm{d}\mathrm{Exp}_{\frac12,I_{4\times 2}})_{t\gamma^\prime(0)}[tW_2]= \begin{pmatrix}
		-t\sin\left(\frac{t}{\sqrt{2}}\right)&0\\
		0&-t\sin\left(\frac{t}{\sqrt{2}}\right)\\
		t\cos\left(\frac{t}{\sqrt{2}}\right)&-\sqrt{2}\sin\left(\frac{t}{\sqrt{2}}\right)\\
		\sqrt{2}\sin\left(\frac{t}{\sqrt{2}}\right)&t\cos\left(\frac{t}{\sqrt{2}}\right)
	\end{pmatrix},\\
	J_3(t) &=(\mathrm{d}\mathrm{Exp}_{\frac12,I_{4\times 2}})_{t\gamma^\prime(0)}[tW_3]= \begin{pmatrix}
		-t\sin\left(\frac{t}{\sqrt{2}}\right)&0\\
		0&-t\sin\left(\frac{t}{\sqrt{2}}\right)\\
		t\cos\left(\frac{t}{\sqrt{2}}\right)&\sqrt{2}\sin\left(\frac{t}{\sqrt{2}}\right)\\
		-\sqrt{2}\sin\left(\frac{t}{\sqrt{2}}\right)&t\cos\left(\frac{t}{\sqrt{2}}\right)
	\end{pmatrix},\\
	J_4(t) &=(\mathrm{d}\mathrm{Exp}_{\frac12,I_{4\times 2}})_{t\gamma^\prime(0)}[tW_4]= \begin{pmatrix}
		-t\sin\left(\frac{t}{\sqrt{2}}\right)&-t\sin\left(\frac{t}{\sqrt{2}}\right)\\
		-t\sin\left(\frac{t}{\sqrt{2}}\right)&t\sin\left(\frac{t}{\sqrt{2}}\right)\\
		t\cos\left(\frac{t}{\sqrt{2}}\right)&t\cos\left(\frac{t}{\sqrt{2}}\right)\\
		t\cos\left(\frac{t}{\sqrt{2}}\right)&-t\cos\left(\frac{t}{\sqrt{2}}\right)
	\end{pmatrix},\\ 
	J_5(t) &=(\mathrm{d}\mathrm{Exp}_{\frac12,I_{4\times 2}})_{t\gamma^\prime(0)}[tW_5]= \begin{pmatrix}
		t\sin\left(\frac{t}{\sqrt{2}}\right)&-t\sin\left(\frac{t}{\sqrt{2}}\right)\\
		-t\sin\left(\frac{t}{\sqrt{2}}\right)&-t\sin\left(\frac{t}{\sqrt{2}}\right)\\
		-t\cos\left(\frac{t}{\sqrt{2}}\right)&t\cos\left(\frac{t}{\sqrt{2}}\right)\\
		t\cos\left(\frac{t}{\sqrt{2}}\right)&t\cos\left(\frac{t}{\sqrt{2}}\right)
	\end{pmatrix}.
\end{align*}
These Jacobi fields all vanish at $t=0$ and are linearly independent. But, there is no conjugate point that can be read of directly. So, we investigate whether there are linear combinations of Jacobi fields that define conjugate points. First, we observe that also the matrices defined by the Jacobi fields at some time $t$ are linearly independent as long as all $t$-dependent entries are non-zero. Hence, we are looking for linear combinations of Jacobi fields that vanish at some time $t$ where a $t$-dependent entry becomes zero.
All the Jacobi fields $J_k$ feature the terms $\sin\left(\frac{t}{\sqrt{2}}\right)$ or $\cos\left(\frac{t}{\sqrt{2}}\right)$
in some of their entry-functions.
The term $\sin\left(\frac{t}{\sqrt{2}}\right)$ becomes zero for multiples of $\sqrt{2}\pi$. The term $\cos\left(\frac{t}{\sqrt{2}}\right)$ becomes zero for $\sqrt{2}\left(\frac{\pi}{2} + k\pi\right)$, $k\in\mathbb{Z}$. The smallest positive root of the term $\frac12\left(t\cos\left(\frac{t}{\sqrt{2}}\right) + \sqrt{2}\sin\left(\frac{t}{\sqrt{2}}\right)\right)$ that features in $J_1$ is $t_1$, where $t_1 = \sqrt{2}t_\frac12^\mathrm{r}\approx 2.8690968494$. Recall that the geodesic length between the starting point of a geodesic and its first conjugate point is bounded below by the injectivity radius. Since the injectivity radius of the Stiefel manifold equipped with the canonical metric is bounded below by $\sqrt{\frac{4}{5}}\pi\approx 2.8099258924$, the smallest candidate to look for a linear combination of Jacobi fields vanishing at $t\geq\sqrt{\frac{4}{5}}\pi$ is $t_1$. Indeed, $J(t) \coloneq J_1(t) + \frac{t_1}{4\sqrt{2}}(J_3(t) - J_2(t))$ is a Jacobi field along $\gamma$ that vanishes at $t_1$. 
This results in $\gamma$ having its first conjugate point at $t_1 = \sqrt{2}t_\frac12^\mathrm{r}$.
\end{appendices}


\begin{thebibliography}{52}
\ifx \bisbn   \undefined \def \bisbn  #1{ISBN #1}\fi
\ifx \binits  \undefined \def \binits#1{#1}\fi
\ifx \bauthor  \undefined \def \bauthor#1{#1}\fi
\ifx \batitle  \undefined \def \batitle#1{#1}\fi
\ifx \bjtitle  \undefined \def \bjtitle#1{#1}\fi
\ifx \bvolume  \undefined \def \bvolume#1{\textbf{#1}}\fi
\ifx \byear  \undefined \def \byear#1{#1}\fi
\ifx \bissue  \undefined \def \bissue#1{#1}\fi
\ifx \bfpage  \undefined \def \bfpage#1{#1}\fi
\ifx \blpage  \undefined \def \blpage #1{#1}\fi
\ifx \burl  \undefined \def \burl#1{\textsf{#1}}\fi
\ifx \doiurl  \undefined \def \doiurl#1{\url{https://doi.org/#1}}\fi
\ifx \betal  \undefined \def \betal{\textit{et al.}}\fi
\ifx \binstitute  \undefined \def \binstitute#1{#1}\fi
\ifx \binstitutionaled  \undefined \def \binstitutionaled#1{#1}\fi
\ifx \bctitle  \undefined \def \bctitle#1{#1}\fi
\ifx \beditor  \undefined \def \beditor#1{#1}\fi
\ifx \bpublisher  \undefined \def \bpublisher#1{#1}\fi
\ifx \bbtitle  \undefined \def \bbtitle#1{#1}\fi
\ifx \bedition  \undefined \def \bedition#1{#1}\fi
\ifx \bseriesno  \undefined \def \bseriesno#1{#1}\fi
\ifx \blocation  \undefined \def \blocation#1{#1}\fi
\ifx \bsertitle  \undefined \def \bsertitle#1{#1}\fi
\ifx \bsnm \undefined \def \bsnm#1{#1}\fi
\ifx \bsuffix \undefined \def \bsuffix#1{#1}\fi
\ifx \bparticle \undefined \def \bparticle#1{#1}\fi
\ifx \barticle \undefined \def \barticle#1{#1}\fi
\bibcommenthead
\ifx \bconfdate \undefined \def \bconfdate #1{#1}\fi
\ifx \botherref \undefined \def \botherref #1{#1}\fi
\ifx \url \undefined \def \url#1{\textsf{#1}}\fi
\ifx \bchapter \undefined \def \bchapter#1{#1}\fi
\ifx \bbook \undefined \def \bbook#1{#1}\fi
\ifx \bcomment \undefined \def \bcomment#1{#1}\fi
\ifx \oauthor \undefined \def \oauthor#1{#1}\fi
\ifx \citeauthoryear \undefined \def \citeauthoryear#1{#1}\fi
\ifx \endbibitem  \undefined \def \endbibitem {}\fi
\ifx \bconflocation  \undefined \def \bconflocation#1{#1}\fi
\ifx \arxivurl  \undefined \def \arxivurl#1{\textsf{#1}}\fi
\csname PreBibitemsHook\endcsname

\bibitem[\protect\citeauthoryear{Edelman et~al.}{1998}]{EdelmanAriasSmith:1999}
\begin{barticle}
\bauthor{\bsnm{Edelman}, \binits{A.}},
\bauthor{\bsnm{Arias}, \binits{T.A.}},
\bauthor{\bsnm{Smith}, \binits{S.T.}}:
\batitle{The geometry of algorithms with orthogonality constraints}.
\bjtitle{SIAM Journal on Matrix Analysis and Applications}
\bvolume{20}(\bissue{2}),
\bfpage{303}--\blpage{353}
(\byear{1998})
\doiurl{10.1137/S0895479895290954}
\end{barticle}
\endbibitem

\bibitem[\protect\citeauthoryear{Boumal}{2023}]{boumal2023}
\begin{bbook}
\bauthor{\bsnm{Boumal}, \binits{N.}}:
\bbtitle{An Introduction to Optimization on Smooth Manifolds}.
\bpublisher{Cambridge University Press},
\blocation{Cambridge}
(\byear{2023}).
\burl{https://www.nicolasboumal.net/book}
\end{bbook}
\endbibitem

\bibitem[\protect\citeauthoryear{Absil et~al.}{2008}]{Absilbook:2008}
\begin{bbook}
\bauthor{\bsnm{Absil}, \binits{P.-A.}},
\bauthor{\bsnm{Mahony}, \binits{R.}},
\bauthor{\bsnm{Sepulchre}, \binits{R.}}:
\bbtitle{Optimization Algorithms on Matrix Manifolds},
p. \bfpage{224}.
\bpublisher{Princeton University Press},
\blocation{Princeton, NJ}
(\byear{2008}).
\burl{http://press.princeton.edu/titles/8586.html}
\end{bbook}
\endbibitem

\bibitem[\protect\citeauthoryear{Chen et~al.}{2024}]{ChenMaManChoZhang:2024}
\begin{barticle}
\bauthor{\bsnm{Chen}, \binits{S.}},
\bauthor{\bsnm{Ma}, \binits{S.}},
\bauthor{\bsnm{Man-Cho~So}, \binits{A.}},
\bauthor{\bsnm{Zhang}, \binits{T.}}:
\batitle{Nonsmooth optimization over the {S}tiefel manifold and beyond: Proximal gradient method and recent variants}.
\bjtitle{SIAM Review}
\bvolume{66}(\bissue{2}),
\bfpage{319}--\blpage{352}
(\byear{2024})
\doiurl{10.1137/24M1628578}
\end{barticle}
\endbibitem

\bibitem[\protect\citeauthoryear{Sato}{2021}]{sato2021}
\begin{bbook}
\bauthor{\bsnm{Sato}, \binits{H.}}:
\bbtitle{Riemannian Optimization and Its Applications},
\bedition{1}st edn.
\bsertitle{SpringerBriefs in Electrical and Computer Engineering},
p. \bfpage{129}.
\bpublisher{Springer},
\blocation{Cham}
(\byear{2021}).
\doiurl{10.1007/978-3-030-62391-3}
\end{bbook}
\endbibitem

\bibitem[\protect\citeauthoryear{Benner et~al.}{2015}]{BennerGugercinWillcox2015}
\begin{barticle}
\bauthor{\bsnm{Benner}, \binits{P.}},
\bauthor{\bsnm{Gugercin}, \binits{S.}},
\bauthor{\bsnm{Willcox}, \binits{K.}}:
\batitle{A survey of projection-based model reduction methods for parametric dynamical systems}.
\bjtitle{SIAM Review}
\bvolume{57}(\bissue{4}),
\bfpage{483}--\blpage{531}
(\byear{2015})
\doiurl{10.1137/130932715}
\end{barticle}
\endbibitem

\bibitem[\protect\citeauthoryear{Celledoni et~al.}{2020}]{Celledoni_2020}
\begin{barticle}
\bauthor{\bsnm{Celledoni}, \binits{E.}},
\bauthor{\bsnm{Eidnes}, \binits{S.}},
\bauthor{\bsnm{Owren}, \binits{B.}},
\bauthor{\bsnm{Ringholm}, \binits{T.}}:
\batitle{{Energy-preserving methods on Riemannian manifolds}}.
\bjtitle{Mathematics of Computation}
\bvolume{89}(\bissue{322}),
\bfpage{699}--\blpage{716}
(\byear{2020})
\doiurl{10.1090/mcom/3470}
\end{barticle}
\endbibitem

\bibitem[\protect\citeauthoryear{H\"uper et~al.}{2008}]{HueperRollingStiefel2008}
\begin{barticle}
\bauthor{\bsnm{H\"uper}, \binits{K.}},
\bauthor{\bsnm{Kleinsteuber}, \binits{M.}},
\bauthor{\bsnm{Silva~Leite}, \binits{F.}}:
\batitle{Rolling {S}tiefel manifolds}.
\bjtitle{International Journal of Systems Science}
\bvolume{39}(\bissue{9}),
\bfpage{881}--\blpage{887}
(\byear{2008})
\doiurl{10.1080/00207720802184717}
\end{barticle}
\endbibitem

\bibitem[\protect\citeauthoryear{Zimmermann}{2021}]{Zim21}
\begin{bchapter}
\bauthor{\bsnm{Zimmermann}, \binits{R.}}:
\bctitle{Manifold interpolation}.
In: \beditor{\bsnm{Benner}, \binits{P.}},
\beditor{\bsnm{Grivet-Talocia}, \binits{S.}},
\beditor{\bsnm{Quarteroni}, \binits{A.}},
\beditor{\bsnm{Rozza}, \binits{G.}},
\beditor{\bsnm{Schilders}, \binits{W.}},
\beditor{\bsnm{Silveira}, \binits{L.M.}} (eds.)
\bbtitle{Volume 1 System- and Data-Driven Methods and Algorithms},
pp. \bfpage{229}--\blpage{274}.
\bpublisher{De Gruyter},
\blocation{Berlin}
(\byear{2021}).
\doiurl{10.1515/9783110498967-007}
\end{bchapter}
\endbibitem

\bibitem[\protect\citeauthoryear{Chakraborty and Vemuri}{2019}]{Chakraborty2019}
\begin{barticle}
\bauthor{\bsnm{Chakraborty}, \binits{R.}},
\bauthor{\bsnm{Vemuri}, \binits{B.}}:
\batitle{Statistics on the {S}tiefel manifold: {T}heory and applications}.
\bjtitle{The Annals of Statistics}
\bvolume{47},
\bfpage{415}--\blpage{438}
(\byear{2019})
\doiurl{10.1214/18-AOS1692}
\end{barticle}
\endbibitem

\bibitem[\protect\citeauthoryear{Pennec et~al.}{2020}]{Fletcher:2020}
\begin{bbook}
\bauthor{\bsnm{Pennec}, \binits{X.}},
\bauthor{\bsnm{Sommer}, \binits{S.H.}},
\bauthor{\bsnm{Fletcher}, \binits{T.}}:
\bbtitle{Riemannian Geometric Statistics in Medical Image Analysis},
\bedition{1.} edn.
\bpublisher{Academic Press},
\blocation{United States}
(\byear{2020}).
\doiurl{10.1016/C2017-0-01561-6}
\end{bbook}
\endbibitem

\bibitem[\protect\citeauthoryear{Turaga et~al.}{2008}]{Turaga_2008}
\begin{bchapter}
\bauthor{\bsnm{Turaga}, \binits{P.K.}},
\bauthor{\bsnm{A.}, \binits{V.}},
\bauthor{\bsnm{Chellappa}, \binits{R.}}:
\bctitle{Statistical analysis on {S}tiefel and {G}rassmann manifolds with applications in computer vision}.
In: \bbtitle{2008 IEEE Conference on Computer Vision and Pattern Recognition},
pp. \bfpage{1}--\blpage{8}
(\byear{2008}).
\doiurl{10.1109/CVPR.2008.4587733}
\end{bchapter}
\endbibitem

\bibitem[\protect\citeauthoryear{Sutti}{2024}]{sutti2023shootingmethodscomputinggeodesics}
\begin{barticle}
\bauthor{\bsnm{Sutti}, \binits{M.}}:
\batitle{{A single shooting method with approximate Fr\'echet derivative for computing geodesics on the Stiefel manifold}}.
\bjtitle{Electron. Trans. Numer. Anal.}
\bvolume{60},
\bfpage{501}--\blpage{519}
(\byear{2024})
\doiurl{10.1553/etna_vol60s501}
\end{barticle}
\endbibitem

\bibitem[\protect\citeauthoryear{Mataigne et~al.}{2025}]{mataignezimmermann25}
\begin{barticle}
\bauthor{\bsnm{Mataigne}, \binits{S.}},
\bauthor{\bsnm{Zimmermann}, \binits{R.}},
\bauthor{\bsnm{Miolane}, \binits{N.}}:
\batitle{An efficient algorithm for the {R}iemannian logarithm on the {S}tiefel manifold for a family of {R}iemannian metrics}.
\bjtitle{SIAM Journal on Matrix Analysis and Applications}
\bvolume{46}(\bissue{2}),
\bfpage{879}--\blpage{905}
(\byear{2025})
\doiurl{10.1137/24M1647801}
\end{barticle}
\endbibitem

\bibitem[\protect\citeauthoryear{Zhang et~al.}{2022}]{Zhang22}
\begin{barticle}
\bauthor{\bsnm{Zhang}, \binits{Y.}},
\bauthor{\bsnm{Zhang}, \binits{S.}},
\bauthor{\bsnm{Wang}, \binits{Y.}},
\bauthor{\bsnm{Zhuang}, \binits{J.}},
\bauthor{\bsnm{Wan}, \binits{P.}}:
\batitle{Riemannian mean shift-based data fusion scheme for multi-antenna cooperative spectrum sensing}.
\bjtitle{IEEE Transactions on Cognitive Communications and Networking}
\bvolume{8}(\bissue{1}),
\bfpage{47}--\blpage{56}
(\byear{2022})
\doiurl{10.1109/TCCN.2021.3089686}
\end{barticle}
\endbibitem

\bibitem[\protect\citeauthoryear{Bergmann}{2022}]{bergmann2022manopt}
\begin{barticle}
\bauthor{\bsnm{Bergmann}, \binits{R.}}:
\batitle{Manopt. jl: Optimization on manifolds in {J}ulia}.
\bjtitle{Journal of Open Source Software}
\bvolume{7}(\bissue{70}),
\bfpage{3866}
(\byear{2022})
\end{barticle}
\endbibitem

\bibitem[\protect\citeauthoryear{Chattopadhyay et~al.}{2016}]{Chattopadhyay16}
\begin{barticle}
\bauthor{\bsnm{Chattopadhyay}, \binits{A.}},
\bauthor{\bsnm{Selvan}, \binits{S.E.}},
\bauthor{\bsnm{Amato}, \binits{U.}}:
\batitle{A derivative-free {R}iemannian {P}owell’s method, minimizing {H}artley-entropy-based {ICA} contrast}.
\bjtitle{IEEE Transactions on Neural Networks and Learning Systems}
\bvolume{27}(\bissue{9}),
\bfpage{1983}--\blpage{1990}
(\byear{2016})
\doiurl{10.1109/TNNLS.2015.2464157}
\end{barticle}
\endbibitem

\bibitem[\protect\citeauthoryear{Selvan}{2015}]{Selvan15}
\begin{barticle}
\bauthor{\bsnm{Selvan}, \binits{S.E.}}:
\batitle{Nonsmooth {ICA} contrast minimization using a {R}iemannian {N}elder–{M}ead method}.
\bjtitle{IEEE Transactions on Neural Networks and Learning Systems}
\bvolume{26}(\bissue{1}),
\bfpage{177}--\blpage{183}
(\byear{2015})
\doiurl{10.1109/TNNLS.2014.2311036}
\end{barticle}
\endbibitem

\bibitem[\protect\citeauthoryear{Hosseini and Uschmajew}{2017}]{hosseini2017}
\begin{barticle}
\bauthor{\bsnm{Hosseini}, \binits{S.}},
\bauthor{\bsnm{Uschmajew}, \binits{A.}}:
\batitle{A {R}iemannian gradient sampling algorithm for nonsmooth optimization on manifolds}.
\bjtitle{SIAM Journal on Optimization}
\bvolume{27}(\bissue{1}),
\bfpage{173}--\blpage{189}
(\byear{2017})
\doiurl{10.1137/16M1069298}
\end{barticle}
\endbibitem

\bibitem[\protect\citeauthoryear{Afsari}{2011}]{afsari2011}
\begin{barticle}
\bauthor{\bsnm{Afsari}, \binits{B.}}:
\batitle{Riemannian ${L}^p$ center of mass: existence, uniqueness, and convexity}.
\bjtitle{Proceedings of the American Mathematical Society}
\bvolume{139},
\bfpage{655}--\blpage{673}
(\byear{2011})
\doiurl{10.1090/ S0002-9939-2010-10541-5}
\end{barticle}
\endbibitem

\bibitem[\protect\citeauthoryear{Arnaudon and Nielsen}{2013}]{arnaudon2013}
\begin{barticle}
\bauthor{\bsnm{Arnaudon}, \binits{M.}},
\bauthor{\bsnm{Nielsen}, \binits{F.}}:
\batitle{{On approximating the Riemannian 1-center}}.
\bjtitle{Computational Geometry}
\bvolume{46}(\bissue{1}),
\bfpage{93}--\blpage{104}
(\byear{2013})
\doiurl{10.1016/j.comgeo.2012.04.007}
\end{barticle}
\endbibitem

\bibitem[\protect\citeauthoryear{Pennec}{2006}]{Pennec2006}
\begin{barticle}
\bauthor{\bsnm{Pennec}, \binits{X.}}:
\batitle{Intrinsic statistics on {R}iemannian manifolds: Basic tools for geometric measurements}.
\bjtitle{Journal of Mathematical Imaging and Vision}
\bvolume{25}(\bissue{1}),
\bfpage{127}--\blpage{154}
(\byear{2006})
\doiurl{10.1007/s10851-006-6228-4}
\end{barticle}
\endbibitem

\bibitem[\protect\citeauthoryear{Guigui et~al.}{2023}]{guigui2023}
\begin{barticle}
\bauthor{\bsnm{Guigui}, \binits{N.}},
\bauthor{\bsnm{Miolane}, \binits{N.}},
\bauthor{\bsnm{Pennec}, \binits{X.}}:
\batitle{Introduction to {R}iemannian geometry and geometric statistics: From basic theory to implementation with {G}eomstats}.
\bjtitle{Foundations and Trends® in Machine Learning}
\bvolume{16}(\bissue{3}),
\bfpage{329}--\blpage{493}
(\byear{2023})
\doiurl{10.1561/2200000098}
\end{barticle}
\endbibitem

\bibitem[\protect\citeauthoryear{Absil et~al.}{2016}]{absil2016beziersurfaces}
\begin{barticle}
\bauthor{\bsnm{Absil}, \binits{P.-A.}},
\bauthor{\bsnm{Gousenbourger}, \binits{P.-Y.}},
\bauthor{\bsnm{Striewski}, \binits{P.}},
\bauthor{\bsnm{Wirth}, \binits{B.}}:
\batitle{Differentiable piecewise-{B}ézier surfaces on {R}iemannian manifolds}.
\bjtitle{SIAM Journal on Imaging Sciences}
\bvolume{9}(\bissue{4}),
\bfpage{1788}--\blpage{1828}
(\byear{2016})
\doiurl{10.1137/16M1057978}
\end{barticle}
\endbibitem

\bibitem[\protect\citeauthoryear{Gousenbourger et~al.}{2019}]{gousenbourger2019}
\begin{barticle}
\bauthor{\bsnm{Gousenbourger}, \binits{P.-Y.}},
\bauthor{\bsnm{Massart}, \binits{E.}},
\bauthor{\bsnm{Absil}, \binits{P.-A.}}:
\batitle{Data fitting on manifolds with composite {B}ézier-like curves and blended cubic splines}.
\bjtitle{Journal of Mathematical Imaging and Vision}
\bvolume{61},
\bfpage{645}--\blpage{671}
(\byear{2019})
\doiurl{10.1007/s10851-018-0865-2}
\end{barticle}
\endbibitem

\bibitem[\protect\citeauthoryear{do~Carmo}{1992}]{docarmo}
\begin{bbook}
\bauthor{\bsnm{Carmo}, \binits{M.P.}}:
\bbtitle{Riemannian Geometry},
\bedition{1}st edn.
\bsertitle{Mathematics: Theory \& Applications},
p. \bfpage{300}.
\bpublisher{Birkh{\"a}user},
\blocation{Boston, MA}
(\byear{1992})
\end{bbook}
\endbibitem

\bibitem[\protect\citeauthoryear{Petersen}{2016}]{petersen2016riemannian}
\begin{bbook}
\bauthor{\bsnm{Petersen}, \binits{P.}}:
\bbtitle{Riemannian Geometry},
\bedition{3}rd edn.
\bsertitle{Graduate Texts in Mathematics},
p. \bfpage{499}.
\bpublisher{Springer},
\blocation{Cham}
(\byear{2016}).
\doiurl{10.1007/978-3-319-26654-1}
\end{bbook}
\endbibitem

\bibitem[\protect\citeauthoryear{Wong}{1967}]{Wong1967}
\begin{barticle}
\bauthor{\bsnm{Wong}, \binits{Y.-C.}}:
\batitle{{Differential Geometry of Grassmann Manifolds}}.
\bjtitle{Proceedings of the National Academy of Sciences of the United States of America}
\bvolume{57}(\bissue{3}),
\bfpage{589}--\blpage{594}
(\byear{1967})
\end{barticle}
\endbibitem

\bibitem[\protect\citeauthoryear{Wong}{1968}]{Wong1968}
\begin{barticle}
\bauthor{\bsnm{Wong}, \binits{Y.-C.}}:
\batitle{{Sectional Curvatures of Grassmann Manifolds}}.
\bjtitle{Proceedings of the National Academy of Sciences of the United States of America}
\bvolume{60}(\bissue{1}),
\bfpage{75}--\blpage{79}
(\byear{1968})
\end{barticle}
\endbibitem

\bibitem[\protect\citeauthoryear{Zimmermann and Stoye}{2025}]{zimmermannstoye_eucl_inj:2024}
\begin{barticle}
\bauthor{\bsnm{Zimmermann}, \binits{R.}},
\bauthor{\bsnm{Stoye}, \binits{J.}}:
\batitle{The injectivity radius of the compact {S}tiefel manifold under the {E}uclidean metric}.
\bjtitle{SIAM Journal on Matrix Analysis and Applications}
\bvolume{46}(\bissue{1}),
\bfpage{298}--\blpage{309}
(\byear{2025})
\doiurl{10.1137/24M1663818}
\end{barticle}
\endbibitem

\bibitem[\protect\citeauthoryear{Klingenberg}{1982}]{klingenberg1982}
\begin{bbook}
\bauthor{\bsnm{Klingenberg}, \binits{W.}}:
\bbtitle{Riemannian Geometry}.
\bsertitle{vol. 1 of de Gruyter Studies in Mathematics}.
\bpublisher{Walter de Gruyter \& Co.},
\blocation{Berlin}
(\byear{1982})
\end{bbook}
\endbibitem

\bibitem[\protect\citeauthoryear{H{\"u}per et~al.}{2021}]{HueperMarkinaLeite2020}
\begin{barticle}
\bauthor{\bsnm{H{\"u}per}, \binits{K.}},
\bauthor{\bsnm{Markina}, \binits{I.}},
\bauthor{\bsnm{Silva~Leite}, \binits{F.}}:
\batitle{{A Lagrangian approach to extremal curves on Stiefel manifolds}}.
\bjtitle{Journal of Geometric Mechanics}
\bvolume{13}(\bissue{1}),
\bfpage{55}--\blpage{72}
(\byear{2021})
\doiurl{10.3934/jgm.2020031}
\end{barticle}
\endbibitem

\bibitem[\protect\citeauthoryear{Absil and Mataigne}{2025}]{absilmataigne2024ultimate}
\begin{barticle}
\bauthor{\bsnm{Absil}, \binits{P.-A.}},
\bauthor{\bsnm{Mataigne}, \binits{S.}}:
\batitle{The ultimate upper bound on the injectivity radius of the {S}tiefel manifold}.
\bjtitle{SIAM Journal on Matrix Analalysis and Applications}
\bvolume{46}(\bissue{2}),
\bfpage{1145}--\blpage{1167}
(\byear{2025})
\doiurl{10.1137/24M1644808}
\end{barticle}
\endbibitem

\bibitem[\protect\citeauthoryear{Rentmeesters}{2013}]{Rentmeesters2013}
\begin{botherref}
\oauthor{\bsnm{Rentmeesters}, \binits{Q.}}:
Algorithms for data fitting on some common homogeneous spaces.
PhD thesis,
Universit\'{e} {C}atholique de Louvain,
Louvain, Belgium
(2013).
\url{http://hdl.handle.net/2078.1/132587}
\end{botherref}
\endbibitem

\bibitem[\protect\citeauthoryear{Zimmermann and Stoye}{2025}]{zimmermannstoye_curvature:2024}
\begin{barticle}
\bauthor{\bsnm{Zimmermann}, \binits{R.}},
\bauthor{\bsnm{Stoye}, \binits{J.}}:
\batitle{High curvature means low rank: On the sectional curvature of {G}rassmann and {S}tiefel manifolds and the underlying matrix trace inequalities}.
\bjtitle{SIAM Journal on Matrix Analysis and Applications}
\bvolume{46}(\bissue{1}),
\bfpage{748}--\blpage{779}
(\byear{2025})
\doiurl{10.1137/24M1655755}
\end{barticle}
\endbibitem

\bibitem[\protect\citeauthoryear{Gallier}{2011}]{gallier2011geometric}
\begin{bbook}
\bauthor{\bsnm{Gallier}, \binits{J.}}:
\bbtitle{Geometric Methods and Applications: For Computer Science and Engineering}.
\bsertitle{Texts in Applied Mathematics}.
\bpublisher{Springer},
\blocation{New York}
(\byear{2011}).
\doiurl{10.1007/978-1-4613-0137-0}
\end{bbook}
\endbibitem

\bibitem[\protect\citeauthoryear{Zimmermann and H\"{u}per}{2022}]{ZimmermannHueper2022}
\begin{barticle}
\bauthor{\bsnm{Zimmermann}, \binits{R.}},
\bauthor{\bsnm{H\"{u}per}, \binits{K.}}:
\batitle{Computing the {R}iemannian logarithm on the {S}tiefel manifold: Metrics, methods, and performance}.
\bjtitle{SIAM Journal on Matrix Analysis and Applications}
\bvolume{43}(\bissue{2}),
\bfpage{953}--\blpage{980}
(\byear{2022})
\doiurl{10.1137/21M1425426}
\end{barticle}
\endbibitem

\bibitem[\protect\citeauthoryear{Nguyen}{2022a}]{nguyen2022curvature}
\begin{barticle}
\bauthor{\bsnm{Nguyen}, \binits{D.}}:
\batitle{Curvatures of {S}tiefel manifolds with deformation metrics}.
\bjtitle{Journal of Lie Theory}
\bvolume{32}(\bissue{2}),
\bfpage{563}--\blpage{600}
(\byear{2022})
{\href{https://arxiv.org/abs/https://www.heldermann.de/JLT/JLT32/JLT322/jlt32027.htm}{{https://www.heldermann.de/JLT/JLT32/JLT322/jlt32027.htm}}}
\end{barticle}
\endbibitem

\bibitem[\protect\citeauthoryear{Nguyen}{2022b}]{nguyen2021closedform}
\begin{barticle}
\bauthor{\bsnm{Nguyen}, \binits{D.}}:
\batitle{Closed-form geodesics and optimization for {R}iemannian logarithms of {S}tiefel and flag manifolds}.
\bjtitle{Journal of Optimization Theory and Applications}
\bvolume{194}(\bissue{1}),
\bfpage{142}--\blpage{166}
(\byear{2022})
\doiurl{10.1007/s10957-022-02012-3}
\end{barticle}
\endbibitem

\bibitem[\protect\citeauthoryear{Nguyen and Sommer}{2025}]{nguyen25}
\begin{barticle}
\bauthor{\bsnm{Nguyen}, \binits{D.}},
\bauthor{\bsnm{Sommer}, \binits{S.}}:
\batitle{Parallel transport on matrix manifolds and exponential action}.
\bjtitle{SIAM Journal on Matrix Analysis and Applications}
\bvolume{46}(\bissue{4}),
\bfpage{2360}--\blpage{2394}
(\byear{2025})
\doiurl{10.1137/24M1684189}
\end{barticle}
\endbibitem

\bibitem[\protect\citeauthoryear{K{\"u}hnel}{2015}]{kuhnel:2015}
\begin{bbook}
\bauthor{\bsnm{K{\"u}hnel}, \binits{W.}}:
\bbtitle{Differential Geometry: Curves -- Surfaces -- Manifolds},
\bedition{3}rd edn.
\bsertitle{Student Mathematical Library}.
\bpublisher{American Mathematical Society},
\blocation{Providence}
(\byear{2015})
\end{bbook}
\endbibitem

\bibitem[\protect\citeauthoryear{Sakai}{1996}]{sakai1996riemannian}
\begin{bbook}
\bauthor{\bsnm{Sakai}, \binits{T.}}:
\bbtitle{Riemannian Geometry}.
\bsertitle{Fields Institute Communications}.
\bpublisher{American Mathematical Society},
\blocation{Providence}
(\byear{1996})
\end{bbook}
\endbibitem

\bibitem[\protect\citeauthoryear{Klingenberg}{1978}]{Klingenberg:1978}
\begin{bbook}
\bauthor{\bsnm{Klingenberg}, \binits{W.}}:
\bbtitle{A Course in Differential Geometry}.
\bsertitle{Graduate Texts in Mathematics},
vol. \bseriesno{51}.
\bpublisher{Springer},
\blocation{New York}
(\byear{1978})
\end{bbook}
\endbibitem

\bibitem[\protect\citeauthoryear{Horn and Johnson}{2012}]{Horn_Johnson_2012}
\begin{bbook}
\bauthor{\bsnm{Horn}, \binits{R.A.}},
\bauthor{\bsnm{Johnson}, \binits{C.R.}}:
\bbtitle{Matrix Analysis},
\bedition{2}nd edn.
\bpublisher{Cambridge University Press},
\blocation{Cambridge}
(\byear{2012}).
\doiurl{10.1017/CBO9781139020411}
\end{bbook}
\endbibitem

\bibitem[\protect\citeauthoryear{Wu and Chen}{1988}]{WuChen1988}
\begin{barticle}
\bauthor{\bsnm{Wu}, \binits{G.L.}},
\bauthor{\bsnm{Chen}, \binits{W.H.}}:
\batitle{A matrix inequality and its geometric applications}.
\bjtitle{Acta Math. Sinica}
\bvolume{31}(\bissue{3}),
\bfpage{348}--\blpage{355}
(\byear{1988})
\end{barticle}
\endbibitem

\bibitem[\protect\citeauthoryear{Cheeger and Ebin}{1975}]{cheeger1975comparison}
\begin{bbook}
\bauthor{\bsnm{Cheeger}, \binits{J.}},
\bauthor{\bsnm{Ebin}, \binits{D.G.}}:
\bbtitle{Comparison Theorems in {R}iemannian Geometry}.
\bsertitle{North-Holland mathematical library}.
\bpublisher{Elsevier},
\blocation{Amsterdam}
(\byear{1975})
\end{bbook}
\endbibitem

\bibitem[\protect\citeauthoryear{Ge}{2014}]{Ge2014}
\begin{barticle}
\bauthor{\bsnm{Ge}, \binits{J.Q.}}:
\batitle{{DDVV}-type inequality for skew-symmetric matrices and {S}imons-type inequality for {R}iemannian submersions}.
\bjtitle{Advances in Mathematics}
\bvolume{251},
\bfpage{62}--\blpage{86}
(\byear{2014})
\doiurl{10.1016/j.aim.2013.10.010}
\end{barticle}
\endbibitem

\bibitem[\protect\citeauthoryear{Deng et~al.}{2025}]{deng2025exponentialskewsymmetricmatricesnearby}
\begin{botherref}
\oauthor{\bsnm{Deng}, \binits{Z.}},
\oauthor{\bsnm{Absil}, \binits{P.-A.}},
\oauthor{\bsnm{Gallivan}, \binits{K.A.}},
\oauthor{\bsnm{Huang}, \binits{W.}}:
The Exponential of Skew-Symmetric Matrices: A Nearby Inverse and Efficient Computation of Derivatives
(2025).
\url{https://arxiv.org/abs/2506.18302}
\end{botherref}
\endbibitem

\bibitem[\protect\citeauthoryear{Zimmermann}{2020}]{ZimmermannHermite_2020}
\begin{barticle}
\bauthor{\bsnm{Zimmermann}, \binits{R.}}:
\batitle{{H}ermite interpolation and data processing errors on {R}iemannian matrix manifolds}.
\bjtitle{SIAM Journal on Scientific Computing}
\bvolume{42}(\bissue{5}),
\bfpage{2593}--\blpage{2619}
(\byear{2020})
\doiurl{10.1137/19M1282878}
\end{barticle}
\endbibitem

\bibitem[\protect\citeauthoryear{Bendokat et~al.}{2020}]{grassmann}
\begin{botherref}
\oauthor{\bsnm{Bendokat}, \binits{T.}},
\oauthor{\bsnm{Zimmermann}, \binits{R.}},
\oauthor{\bsnm{Absil}, \binits{P.-A.}}:
A {G}rassmann manifold handbook: Basic geometry and computational aspects.
Advances in Computational Mathematics
\textbf{50}(6)
(2020)
\doiurl{10.1007/s10444-023-10090-8}
\end{botherref}
\endbibitem

\bibitem[\protect\citeauthoryear{Najfeld and Havel}{1995}]{NAJFELD1995321}
\begin{barticle}
\bauthor{\bsnm{Najfeld}, \binits{I.}},
\bauthor{\bsnm{Havel}, \binits{T.F.}}:
\batitle{Derivatives of the matrix exponential and their computation}.
\bjtitle{Advances in Applied Mathematics}
\bvolume{16}(\bissue{3}),
\bfpage{321}--\blpage{375}
(\byear{1995})
\doiurl{10.1006/aama.1995.1017}
\end{barticle}
\endbibitem

\bibitem[\protect\citeauthoryear{Higham}{2008}]{Higham:2008:FM}
\begin{bbook}
\bauthor{\bsnm{Higham}, \binits{N.J.}}:
\bbtitle{Functions of Matrices: {Theory} and Computation},
p. \bfpage{425}.
\bpublisher{Society for Industrial and Applied Mathematics},
\blocation{Philadelphia, PA, USA}
(\byear{2008})
\end{bbook}
\endbibitem

\end{thebibliography}
\end{document}